\numberwithin{equation}{section}
\theoremstyle{theorem}
\newtheorem{thm}{Theorem}[section]
\newtheorem{lemma}[thm]{Lemma}
\newtheorem{prop}[thm]{Proposition}
\newtheorem{cor}[thm]{Corollary}
\theoremstyle{definition}
\newtheorem{defn}[thm]{Definition}
\theoremstyle{remark}
\newtheorem{eg}[thm]{Example}
\newtheorem{rmk}[thm]{Remark}
\DeclareMathOperator{\im}{im}
\newcommand{\id}{\operatorname{id}}
\newcommand{\hull}{\operatorname{hull}}
\newcommand{\Prim}{\operatorname{Prim}}
\newcommand{\Per}{\operatorname{Per}}
\newcommand{\Ind}{\operatorname{Ind}}
\newcommand{\Hom}{\operatorname{Hom}}
\newcommand{\ospan}{\overline{\operatorname{span}}}
\newcommand{\espan}{\operatorname{span}}
\newcommand{\lt}{\operatorname{lt}}
\newcommand{\qH}{j^H}
\newcommand{\qHa}[1]{j^H_{#1}}
\newcommand{\ext}[1]{i^H(#1)}
\newcommand{\NN}{\mathbb{N}}
\newcommand{\TT}{\mathbb{T}}
\newcommand{\ZZ}{\mathbb{Z}}
\newcommand{\BB}{\mathbb{B}}
\newcommand{\FF}{\mathbb{F}}
\newcommand{\Gg}{\mathcal{G}}
\newcommand{\Kk}{\mathcal{K}}
\newcommand{\minimatrix}[4]{\bigl(\begin{smallmatrix}#1 & #2 \\ #3 & #4\end{smallmatrix} \bigr)}
\newcommand{\minivector}[2]{\bigl(\begin{smallmatrix}#1 \\ #2 \end{smallmatrix} \bigr)}
\newcommand{\Etilde}{\tilde{E}}
\newcommand{\dtilde}{\tilde{d}}
\newcommand{\Kktilde}{\tilde{\Kk}}
\newcommand{\alphatilde}{\tilde{\alpha}}
\title[Real rank and topological dimension of higher rank graph algebras]{Real rank and topological dimension of higher rank graph algebras}
\date{\today}
\author{David Pask}
\email{dpask/asierako/asims@uow.edu.au}
\author{Adam Sierakowski}
\author{Aidan Sims}
\address[D. Pask, A. Sierakowski and A. Sims]{School of Mathematics and Applied Statistics \\
University of Wollongong\\
Wollongong NSW 2522\\
AUSTRALIA}
\subjclass[2010]{46L05}
\keywords{Graph $C^*$-algebra; real rank; topological dimension; purely infinite; higher-rank graph}
\thanks{This research was supported by the Australian Research Council.}
\begin{document}

\begin{abstract}
We study dimension theory for the $C^*$-algebras of row-finite $k$-graphs with no
sources. We establish that strong aperiodicity---the higher-rank analogue of
condition~(K)---for a $k$-graph is necessary and sufficient for the associated
$C^*$-algebra to have topological dimension zero. We prove that a purely infinite
$2$-graph algebra has real-rank zero if and only if it has topological dimension zero and
satisfies a homological condition that can be characterised in terms of the adjacency
matrices of the $2$-graph. We also show that a $k$-graph $C^*$-algebra with topological
dimension zero is purely infinite if and only if all the vertex projections are properly
infinite. We show by example that there are strongly purely infinite $2$-graphs algebras,
both with and without topological dimension zero, that fail to have real-rank zero.
\end{abstract}

\maketitle

\section{Introduction}
Dimension theory of $C^*$-algebras has played an important role in the subject over the
last few decades. In particular, notions of dimension such as real rank, stable rank,
decomposition rank and nuclear dimension have become more and more prominent in
classification theory. But these notions of dimension can be difficult to compute for
concrete classes of examples. In this paper we focus on dimension theory of higher rank
graph $C^*$-algebras. Specifically we consider topological dimension zero and real-rank
zero of $C^*$-algebras associated to higher rank graphs.

A $C^*$-algebra $A$ has \emph{topological dimension zero} if the primitive ideal space of
$A$ endowed with the Jacobsen topology has a basis of compact open sets. This definition
can be found in Brown and Pedersen's work \cite{BroPed2}, but the study of the concept
goes back much further. For example Bratteli and Elliott showed \cite{BraEll} that a
separable $C^*$-algebra $A$ of type I is AF if and only if it has topological dimension
zero. For more recent work on topological dimension see \cite{Bro, BroPed2, KanPas, Pas2,
PasPhi, PasRor, RobTik, Thi}.

Recently, sufficient conditions on a $k$-graph $\Lambda$ for its $C^*$-algebra to have
topological dimension zero were established by Kang and the first named author. They
proved in \cite[Theorem~4.2]{KanPas} that for a row-finite $k$-graph $\Lambda$ with no
sources, the $C^*$-algebra associated to $\Lambda$ has topological dimension zero if
$\Lambda$ is strongly aperiodic. Strong aperiodicity is the higher-rank analogue of the
well-known condition~(K) for directed graphs (every vertex that is the basepoint of a
first-return path is the basepoint of at least two such paths). It can be characterised
combinatorially in terms of the $k$-graph, and can be rephrased in a number of other
ways. For example, it is equivalent to the condition on the infinite-path groupoid
$\Gg_\Lambda$ of $\Lambda$, that for every closed invariant subset of the unit space, the
points with trivial isotropy are dense. It is also equivalent to the property that every
ideal of the $k$-graph $C^*$-algebra $C^*(\Lambda)$ is gauge-invariant. We prove in
Section~\ref{sec:topdim} that this sufficient condition is also necessary
(Theorem~\ref{thm2.3}). Our approach uses the fact that topological dimension zero passes
to ideals and quotients, together with recent results on $P$-graphs, induced algebras,
and primitive-ideal structure of $k$-graph algebras \cite{CarKanShoSim, Wil}. We finish
the section with Corollary~\ref{thm.ip} which describes a number of properties of
$\Lambda$ and $C^*(\Lambda)$ that are all equivalent to strong aperiodicity of $\Lambda$,
and hence topological dimension zero for $C^*(\Lambda)$.

We then consider real-rank zero for purely infinite $k$-graph $C^*$-algebras. A
$C^*$-algebra $A$ has \emph{real-rank zero} if the set of invertible self-adjoint
elements of the minimal unitisation of $A$ is dense in the set of all self-adjoint
elements in the minimal unitisation of $A$ \cite{BroPed}. For $k=1$, it was established
in the locally finite case in \cite{JeoParShi, JeoPar}, and in general by Hong and
Szyma\'nski in \cite[Theorem~2.5]{HonSzy}, that a graph $C^*$-algebra $C^*(E)$ has
real-rank zero if and only if the graph satisfies Condition~(K). However, the methods of
\cite{JeoParShi, JeoPar, HonSzy} do not generalise to $k$-graphs. So we restrict our
attention to the special case of $k$-graphs $\Lambda$ for which $C^*(\Lambda)$ is purely
infinite.

Simple purely infinite $C^*$-algebras are automatically of real-rank zero by
\cite{BroPed, Zha}, but non-simple purely infinite $C^*$-algebras need not have real-rank
zero (see Examples \ref{eq6.3}~and~\ref{eq6.4}). Indeed, Pasnicu and R{\o}rdam have
proved that a purely infinite $C^*$-algebra has real-rank zero if and only if it has
topological dimension zero and satisfies a $K$-theoretic criterion called
$K_0$-liftability \cite{PasRor}. Using this result we derive a homological condition
characterising when a purely infinite $2$-graph $C^*$-algebra has real-rank zero. We do
this by examining Evans' spectral-sequence calculation of $K$-theory for $k$-graph
$C^*$-algebras. This sequence is based on a complex $D^\Lambda_*$ in which the terms are
the exterior powers of $\ZZ^k$ tensored with $\ZZ\Lambda^0$. When $k = 2$, Evans proves
that $K_1(C^*(\Lambda))$ is isomorphic to the first homology group of this complex. We
show that the inclusion $j^H : H \hookrightarrow \Lambda^0$ of a saturated hereditary set
$H$ induces a morphism from the spectral sequence for $\Gamma := H\Lambda$ to the
spectral sequence for $\Lambda$. Using naturality of Kasparov's spectral sequence, we
show that for $k = 2$ the map in $K_1$ induced by the inclusion $C^*(\Gamma)
\hookrightarrow C^*(\Lambda)$ coincides with the map $H_1(j^H) := H_1(D^\Gamma_*) \to
H_1(D^\Gamma_*)$ induced by the inclusion $H \hookrightarrow \Lambda^0$. Combining this
with Pasnicu and R{\o}rdam's result yields our characterisation of real-rank zero: a
purely infinite $2$-graph $C^*$-algebra has real rank zero if and only if $\Lambda$ is
strongly aperiodic and $H_1(j^H)$ is injective for each saturated hereditary subset $H$
of $\Lambda^0$ (Theorem~\ref{thm4.2}). Both of these conditions are necessary for
$C^*(\Lambda)$ to have real rank zero even when $C^*(\Lambda)$ is not purely infinite.
Moreover, the injectivity of each $H_1(j^H)$ can be reformulated as an elementary
algebraic condition involving the adjacency matrices of the ambient $2$-graph.

Our resulting condition on $\Lambda$ is much more subtle than the corresponding result
for $1$-graphs, reflecting the more nuanced $K$-theory of $k$-graph $C^*$-algebras in
which, for example, the $K_0$-classes of vertex projections do not necessarily generate
the whole $K_0$-group (see \cite{Eva}).

Since pure infiniteness is a hypothesis in Theorem~\ref{thm4.2}, we develop in
Section~\ref{sec:pi} a characterisation of pure infiniteness for the $C^*$-algebras of
strongly aperiodic $k$-graphs. This topic has been intensively studied, and for 1-graphs
this culminated in \cite{HonSzy}, where sufficient and necessary conditions for pure
infiniteness of a $1$-graph $C^*$-algebra (in terms of the $1$-graph) where established.
Building on recent results for cofinal $k$-graphs in \cite{BroClaSie}, we establish a
number of properties equivalent to pure infiniteness of $C^*(\Lambda)$. In particular,
generalising \cite[Corollary~5.1]{BroClaSie}, we prove that $C^*(\Lambda)$ is purely
infinite if and only if the vertex projections $\{s_v : v\in\Lambda^{0} \}$ are all
properly infinite (without assuming $\Lambda$ is cofinal). We also show that this is
equivalent to asking that for any saturated hereditary $H \subseteq \Lambda^0$, the
vertex projections of $C^*(\Lambda \setminus \Lambda H)$ are all infinite (not, \emph{a
priori}, properly infinite).

Using this, we prove that if a $k$-graph has an aperiodic quartet \cite{KanPas} at every
vertex, then its $C^*$-algebra is purely infinite (Proposition~\ref{prop:aq-pis}). Our
result adds to the list (see \cite{EvaSim,KumPas,Sim}) of known sufficient conditions on
a $k$-graph $\Lambda$ for pure infiniteness of its $C^*$-algebra. The utility of our
result becomes apparent in Section~\ref{sec:examples}, where we use
Proposition~\ref{prop:aq-pis} to construct two examples of purely infinite $k$-graph
$C^*$-algebras without real-rank zero. The first example is a $2$-graph $\Lambda$ such
that $C^*(\Lambda)$ is purely infinite and has topological dimension zero, but fails to
have real-rank zero because the morphism $H_1(\qH)$, for a suitable $H \subseteq
\Lambda^0$, is not injective. Our second example is also purely infinite, but fails to
have real-rank zero because the graph is not strongly aperiodic, and so its $C^*$-algebra
does not have topological dimension zero. Finally we present an example of a $2$-graph
which is both strongly aperiodic and satisfies the injectivity condition, but whose
$C^*$-algebra is not purely infinite, and fails to have real rank zero.

\section{Preliminaries and notation}\label{sec:prelims}
We let $\Prim(A)$ denote the set of all primitive ideals in a $C^*$-algebra $A$. The
Jacobsen topology on $\Prim(A)$ is defined as follows. For each ideal $I$ of $A$, we
define the hull of $I$ by $\hull(I) := \{P\in \Prim(A): I\subseteq P\}$; and for each
subset $S \subseteq \Prim(A)$, we define the kernel of $S$ to be $\ker(S) := \bigcap_{P
\in S} P$. The Jacobsen topology is then determined by the closure operation:
$\overline{S}:=\hull(\ker(S))$ for every $S \subseteq \Prim(A)$.

Following \cite{KumPas, PasQuiRae, RaeSimYee} we briefly recall the notion of a $k$-graph
and the associated notation. For $k \ge 0$, a \emph{$k$-graph} is a nonempty countable
small category equipped with a functor $d : \Lambda \to \NN^k$ that satisfies the
\emph{factorisation property}: for all $\lambda \in \Lambda$ and $m,n\in \NN^k$ such that
$d(\lambda) = m + n$ there exist unique $\mu, \nu\in \Lambda$ such that $d(\mu) = m$,
$d(\nu) = n$, and $\lambda = \mu\nu$. When $d(\lambda) = n$ we say $\lambda$ has
\emph{degree} $n$, and we write $\Lambda^n = d^{-1}(n)$. The standard generators of
$\NN^k$ are denoted $e_1, \dots ,e_k$, and we write $n_i$ for the $i^{\textrm{th}}$
coordinate of $n \in \NN^k$. For $m,n\in \NN^k$, we write $m \vee n $ for their
coordinate-wise maximum, and define a partial order on $\NN^k$ by $m\leq n$ if $m_i\leq
n_i$ for all $i$.

If $\Lambda$ is a $k$-graph, its \emph{vertices} are the elements of $\Lambda^0$. The
factorisation property implies that these are precisely the identity morphisms, and so
can be identified with the objects. For $\alpha \in \Lambda$ the \emph{source}
$s(\alpha)$ is the domain of $\alpha$, and the \emph{range} $r(\alpha)$ is the codomain
of $\alpha$ (strictly speaking, $s(\alpha)$ and $r(\alpha)$ are the identity morphisms
associated to the domain and codomain of $\alpha$). Given $\lambda,\mu\in \Lambda$ and
$E\subseteq \Lambda$, we define
\begin{gather*}
\lambda E=\{\lambda\nu : \nu\in E, r(\nu)=s(\lambda)\},\quad E\mu=\{\nu\mu : \nu\in E, s(\nu)=r(\mu)\},\quad\text{and}\\
\lambda E\mu=\lambda E\cap E\mu.
\end{gather*}
For $X,E,Y\subseteq\Lambda$, we write $XEY$ for $\bigcup_{\lambda\in X, \mu\in Y}\lambda
E\mu$. We say the $k$-graph $\Lambda$ is \textit{row-finite} if the set $v{\Lambda}^{m}$
is finite for each $m\in\mathbb{N}^k$ and $v\in{\Lambda}^0$. Also, $\Lambda$ has
\textit{no sources} if $v{\Lambda}^{e_i}\ne \emptyset$ for all $v\in{\Lambda}^0$ and
$i\in\{1,\dots,k\}$.

For $\lambda\in\Lambda$ and $0\leq m\leq n\leq d(\lambda)$, we write $\lambda(m,n)$ for
the unique path in $\Lambda$ such that $\lambda=\lambda'\lambda(m,n)\lambda''$, where
$d(\lambda') = m$, $d(\lambda(m,n)) = n-m$ and $d(\lambda'') = d(\lambda)-n$. We write
$\lambda(n)$ for $\lambda(n,n)=s(\lambda(0,n))$. We denote by $\Omega_k$ the $k$-graph
with vertices $\Omega^0_k:=\NN^k$, paths  $\Omega^m_k:=\{(n,n+m), n\in \NN^k\}$ for $m\in
\NN^k$, $r((n,n+m))=n$ and $s((n,n+m))=n+m$.

Let $\Lambda$ be a row-finite $k$-graph with no sources. A \emph{Cuntz--Krieger
$\Lambda$-family} in a $C^*$-algebra $B$ is a function $s : \lambda \mapsto s_\lambda$
from $\Lambda$ to $B$ such that
\begin{itemize}
\item[(CK1)] $\{s_v : v\in \Lambda^0\}$ is a collection of mutually orthogonal
    projections;
\item[(CK2)] $s_\mu s_\nu = s_{\mu\nu}$ whenever $s(\mu) = r(\nu)$;
\item[(CK3)] $s^*_\lambda s_\lambda = s_{s(\lambda)}$ for all $\lambda\in \Lambda$;
    and
\item[(CK4)] $s_v = \sum_{\lambda \in v\Lambda^n} s_\lambda s_\lambda^*$ for all
    $v\in \Lambda^0$ and $n\in \NN^k$.
\end{itemize}
The \emph{$k$-graph $C^*$-algebra $C^*(\Lambda)$} is the universal $C^*$-algebra
generated by a Cuntz--Krieger $\Lambda$-family. For more details see \cite{KumPas}.

\section{Topological dimension zero}\label{sec:topdim}
In this section we study when $k$-graph $C^*$-algebras have topological dimension zero.
This requires some machinery, which we now introduce. Let $\Lambda$ be a row-finite
$k$-graph with no sources. A subset $H$ of $\Lambda^0$ is \emph{hereditary} if
$s(H\Lambda)\subseteq H$. We say $H\subseteq \Lambda^0$ is \emph{saturated} if for all
$v\in \Lambda^0$
$$\{s(\lambda)  : \lambda\in v\Lambda^{e_i}\}\subseteq H \text{ for some } i\in \{1,\dots,k\} \qquad\Longrightarrow\qquad v\in H.$$
(Since $\Lambda$ has no sources, we do not need to worry about whether $r^{-1}(v)$ is
empty---cf.~\cite{RaeSimYee}.) For a hereditary $H\subseteq \Lambda^0$ we write $I_H$ for
the ideal in $C^*(\Lambda)$ generated by $\{s_v : v\in H\}$. If $H$ is hereditary, then
$H\Lambda$ is a row-finite $k$-graph with no sources, and if $H$ is also saturated, then
$\Lambda\setminus \Lambda H$ is also a row-finite $k$-graph with no sources (see
\cite{KumPas}).

Let $\Lambda$ be a row-finite $k$-graph with no sources. The set
\[
\Lambda^\infty:=\{x : \Omega_k\to \Lambda \mid x \textrm{ is a degree-preserving functor}\}
\]
is called the \emph{infinite-path space} of $\Lambda$. For $v\in \Lambda^0$, we write
$v\Lambda^\infty:=\{x\in \Lambda^\infty: x(0)=v\}$, and for $x\in\Lambda^\infty$, we
write $r(x):=x(0)\in \Lambda^0$. For $p\in \NN^k$, the shift map $\sigma^p :
\Lambda^\infty\to \Lambda^\infty$ defined by $(\sigma^p x)(m,n)=x(m+p,n+p)$ for
$(m,n)\in\Omega_k$ is a local homeomorphism. For $\lambda\in \Lambda$ and $x\in
\Lambda^\infty$ with $s(\lambda)=r(x)$ we write $\lambda x$ for the unique element $y\in
\Lambda^\infty$ such that $\lambda=y(0,d(\lambda))$ and $x=\sigma^{d(\lambda)}y$, see
\cite{KumPas}.

\begin{defn}
Let $\Lambda$ be a row-finite $k$-graph with no sources. We say that $\Lambda$ is
\emph{aperiodic} (or satisfies the \emph{aperiodicity condition}) if for every vertex
$v\in \Lambda^0$ there exist an infinite path $x\in v\Lambda^\infty$ such that
$\sigma^m(x)\neq \sigma^n(x)$ for all $m\neq n\in \NN^k$, see \cite{KumPas, RaeSimYee,
RobSim}. If $\Lambda\setminus \Lambda H$ is aperiodic for every hereditary saturated
$H\subsetneq \Lambda^0$, we say $\Lambda$ is \emph{strongly aperiodic}, see
\cite{KanPas}.
\end{defn}

We can now state our main result about topological dimension of $k$-graph $C^*$-algebras.
Let us emphasise that half of the hard work has already been done for us: The implication
(ii)$\Rightarrow$(i) was established in \cite{KanPas} by Kang and the first named author.
As we shall see the reverse implication (i)$\Rightarrow$(ii) also requires some
non-trivial work.

\begin{thm}\label{thm2.3}
Let $\Lambda$ be a row-finite $k$-graph with no sources. Then the following are
equivalent:
\begin{itemize}
\item[(i)] The $C^*$-algebra $C^*(\Lambda)$ has topological dimension zero.
\item[(ii)] The $k$-graph $\Lambda$ is strongly aperiodic.
\end{itemize}
\end{thm}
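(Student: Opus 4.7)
The plan is to prove the contrapositive of (i)$\Rightarrow$(ii); the forward implication (ii)$\Rightarrow$(i) is already due to Kang and Pask \cite[Theorem~4.2]{KanPas}. Assume $\Lambda$ is not strongly aperiodic, so that there is a saturated hereditary $H\subsetneq\Lambda^0$ for which $\Gamma:=\Lambda\setminus\Lambda H$ fails to be aperiodic. Since $C^*(\Gamma)\cong C^*(\Lambda)/I_H$ is a quotient of $C^*(\Lambda)$ and topological dimension zero passes to quotients, it suffices to exhibit a sub-quotient of $C^*(\Gamma)$ whose primitive ideal space lacks a basis of compact open sets.

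The strategy is to produce a sub-quotient whose spectrum is a torus of positive dimension. Since $\Gamma$ is a row-finite, source-free $k$-graph that fails aperiodicity, a Zorn-type combinatorial argument should yield a maximal tail $M\subseteq\Gamma^0$ whose periodicity group $\Per(M)\leq\ZZ^k$ has positive rank $\ell\geq 1$. I will then invoke the primitive-ideal description of \cite{CarKanShoSim} together with the induced-algebra machinery of \cite{Wil}: the primitive ideals indexed by pairs $(M,z)$ with $z\in\widehat{\Per(M)}$ form a locally closed subset of $\Prim(C^*(\Gamma))$ homeomorphic to $\widehat{\Per(M)}\cong\TT^{\ell}$, and the sub-quotient of $C^*(\Gamma)$ corresponding to this locally closed set is an induced $C^*$-algebra over $\TT^\ell$ stabilised by $\Kk$.

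For $\ell\geq 1$ the torus $\TT^\ell$ is connected and has more than one point, so its only compact open sets are $\emptyset$ and $\TT^\ell$ itself; in particular the primitive ideal space of this sub-quotient has no basis of compact open sets, so the sub-quotient does not have topological dimension zero. Since topological dimension zero is inherited by both ideals and quotients (the primitive ideal space of an ideal is open, and of a quotient is closed, in the total primitive ideal space), it is inherited by sub-quotients. Hence $C^*(\Gamma)$, and therefore $C^*(\Lambda)$, fails topological dimension zero, yielding the contrapositive.

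The main obstacle I anticipate lies in the second paragraph: passing from ``$\Gamma$ not aperiodic'' to ``$\Gamma$ carries a maximal tail with periodicity group of positive rank,'' together with the precise identification of the corresponding sub-quotient. The combinatorial half---extracting a periodic maximal tail from the failure of aperiodicity---is in the spirit of arguments in \cite{KanPas}; the analytic half requires carefully unwinding the parametrisation of primitive ideals in \cite{CarKanShoSim} and the induced-algebra description of the corresponding primitive quotients from \cite{Wil}, so that one can identify an honest, locally closed piece of $\Prim(C^*(\Gamma))$ whose relative topology is exactly the standard topology on $\TT^\ell$.
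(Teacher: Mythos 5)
Your overall strategy---prove the contrapositive by extracting a maximal tail with nontrivial periodicity group and then exhibiting a sub-quotient whose primitive ideal space is obstructed by a positive-dimensional torus---is exactly the architecture of the paper's proof. The combinatorial half is fine in outline: the paper's Lemma~\ref{lem2.9} does precisely this, though not by a Zorn argument; it takes a vertex $v$ and $m \neq n$ witnessing the failure of aperiodicity in $\Lambda \setminus \Lambda H'$, forms the shift-tail equivalence class $[x]$ of an infinite path $x \in v(\Lambda\setminus\Lambda H')^\infty$, and shows $T = r([x])$ is a maximal tail with $n - m \in \Per(\Lambda\setminus\Lambda H) \neq \{0\}$.

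The genuine gap is in your fourth paragraph, and it is exactly the step you flag as the ``main obstacle'': the assertion that the primitive ideals indexed by pairs $(M,z)$, $z \in \widehat{\Per(M)}$, form a locally closed subset of $\Prim(C^*(\Gamma))$ that is \emph{homeomorphic} to $\TT^\ell$ in the relative topology. The parametrisation of primitive ideals in \cite{CarKanShoSim} is a set-theoretic catalogue; it does not identify the relative hull-kernel topology on such a slice, and no such homeomorphism is established (or needed) in the paper. The paper's proof is engineered to avoid this claim: after reducing to a maximal tail $\Lambda^0$ with $\Per(\Lambda) \neq \{0\}$, it passes further to the hereditary set $H_{\Per}$ (on which periodicity acts uniformly), realises $C^*(H_{\Per}\Lambda)$ --- a full corner of an ideal, hence inheriting topological dimension zero --- as the induced algebra $\Ind_{\Per(\Lambda)^\bot}^{\TT^k}(C^*(\Gamma),\alpha)$ over the $P$-graph $\Gamma = (H_{\Per}\Lambda)/\!\sim$ (Lemmas \ref{lem2.7}~and~\ref{lem2.8}), and from Williams' description gets $\Prim(C^*(H_{\Per}\Lambda)) \cong (\TT^k \times \Prim(C^*(\Gamma)))/\Per(\Lambda)^\bot$. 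It then uses only a \emph{continuous open surjection} from this space onto $\TT^k/\Per(\Lambda)^\bot \cong \TT^\ell$ (Lemma~\ref{lem2.10}) together with the elementary fact that such surjections preserve the existence of a basis of compact open sets (Lemma~\ref{lem2.4}). That is strictly weaker than your homeomorphism claim but suffices for the connectedness contradiction. To complete your proposal you would either have to supply the induced-algebra analysis anyway (at which point you have reproduced the paper's argument, and the homeomorphism onto $\TT^\ell$ is still not what comes out) or prove the topological identification of the slice of $\Prim(C^*(\Gamma))$ directly, which is not available in the cited literature. Note also that without first cutting down to $H_{\Per}$ your ``sub-quotient'' is not identified as a concrete $C^*$-algebra, so the claim that it is an induced algebra over $\TT^\ell$ stabilised by $\Kk$ has no supplied justification.
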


Before we prove Theorem~\ref{thm2.3} we present six lemmata. The key ideas, namely the
use of $P$-graphs and induced algebras, are introduced in
Lemmas~\ref{lem2.7}~and~\ref{lem2.8}. We briefly recall the notation involved.

Following \cite{KanPas} we recall the definition of a maximal tail. Let $\Lambda$ be a
row-finite $k$-graph with no sources. A nonempty subset $T$ of $\Lambda^0$ is called a
\emph{maximal tail} if
\begin{itemize}
\item[(a)] for all $v_1,v_2\in T$ there exists $w\in T$ such that $v_1\Lambda w\neq
    \emptyset$ and $v_2\Lambda w\neq \emptyset$,
\item[(b)] for every $v\in T$ and $1\leq i\leq k$ there exist $e\in v\Lambda^{e_i}$
    such that $s(e)\in T$, and
\item[(c)] for all $w\in T$ and $v\in \Lambda^0$ with $v\Lambda w\neq \emptyset$ we
    have $v\in T$.
\end{itemize}

Following \cite{CarKanShoSim} we recall the definition of $\Per(\Lambda)$ and $H_{\Per}$.
Let $\Lambda$ be a row-finite $k$-graph with no sources such that $\Lambda^0$ is a
maximal tail. Define a relation $\sim$ on $\Lambda$ by $\mu\sim\nu$ if and only if
$s(\mu)=s(\nu)$ and $\mu x=\nu x$ for all $x\in s(\mu)\Lambda^\infty$. Then
\cite[Lemma~4.2(1)]{CarKanShoSim} says that the set $\Per(\Lambda):=\{d(\mu)-d(\nu) :
\mu,\nu\in \Lambda \textrm{ and } \mu\sim\nu\}$ is a subgroup of $\ZZ^k$, called the
\emph{periodicity group} of $\Lambda$. Moreover, it follows from
\cite[Lemma~4.6]{CarKanShoSim}, that
\begin{align*}
\Per(\Lambda)=\{l\in \ZZ^k: \textrm{ there exists } w\in \Lambda^0 \textrm{ and }n,m\in \NN^k &\textrm{ such that }l=n-m \text{ and } \\
    &\sigma^{n}(y)=\sigma^{m}(y) \textrm{ for all } y\in w\Lambda^\infty\} .
\end{align*}
Let
\begin{align*}
H_{\Per}:=\{v\in \Lambda^0 : \textrm{ for all } \lambda\in v\Lambda \textrm{ and } m\in \NN^k& \textrm{ such that } d(\lambda)-m \in \Per(\Lambda),\\
    &\textrm{ there exists } \mu\in v\Lambda^m \textrm{ such that } \lambda\sim\mu\}.
\end{align*}
Lemma~4.2 of \cite{CarKanShoSim} shows that $H_{\Per}$ is a non-empty hereditary set (not
always saturated).

We will also need to use the $P$-graphs which were introduced in \cite{CarKanShoSim}. The
definitions of $P$-graphs, where $P$ is a finitely generated cancellative abelian monoid,
and the associated $C^*$-algebras are obtained by replacing $\NN^k$ with $P$ in the
definitions for $k$-graphs given above, see \cite[Section~2]{CarKanShoSim} for details.
If $\Gamma$ is a $P$-graph we will continue to call a family satisfying (CK1)--(CK4) for
$\Gamma$ with $P$ replacing $\NN^k$ a Cuntz--Krieger $\Gamma$-family. If
\[
P:=\{m+\Per(\Lambda) : m\in \NN^k\}\subseteq \ZZ^k/\Per(\Lambda),
\]
then $(H_{\Per}\Lambda)/\negthickspace\sim$ is a $P$-graph with operations inherited from
$\Lambda$ and degree map given by $\tilde{d}([\lambda])=d(\lambda)+\Per(\Lambda)$.

\begin{lemma}\label{lem2.7}
Let $\Lambda$ be a row-finite $k$-graph with no sources such that $\Lambda^0$ is a
maximal tail. Define $H:=H_{\Per}$ and let $\Gamma:=(H\Lambda)/\negthickspace\sim$ be the
$P$-graph described above. There exist an action $\alpha$ of the annihilator
$\Per(\Lambda)^\bot:=\{h\in \Hom(\ZZ^k,\TT) : h|_{\Per(\Lambda)}=1\}$ on $C^*(\Gamma)$
such that
$$\alpha_h(s_{[\lambda]})=h(d(\lambda))^{-1}s_{[\lambda]}, \quad\text{ for all }\quad h\in \Per(\Lambda)^\bot \text{ and } \lambda\in H\Lambda.$$
\end{lemma}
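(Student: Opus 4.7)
The plan is to build $\alpha_h$ one character at a time via the universal property of $C^*(\Gamma)$. Fix $h \in \Per(\Lambda)^\perp$ and, for each $[\lambda] \in \Gamma$, set
\[
t_{[\lambda]} := h(d(\lambda))^{-1} s_{[\lambda]} \in C^*(\Gamma).
\]
The first point to verify is that the scalar $h(d(\lambda))$ depends only on the class $[\lambda]$, not on the representative: if $\mu \sim \nu$ in $H\Lambda$, then $d(\mu) - d(\nu) \in \Per(\Lambda)$ by the very definition of $\Per(\Lambda)$ recalled above, so $h(d(\mu))h(d(\nu))^{-1} = 1$. Consequently $t_{[\lambda]}$ is well-defined.

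Next I would check that $\{t_{[\lambda]} : [\lambda] \in \Gamma\}$ is a Cuntz--Krieger $\Gamma$-family in $C^*(\Gamma)$. For (CK1), vertices of $\Gamma$ have degree $0 \in P$ and $h(0)=1$, so $t_{[v]} = s_{[v]}$. For (CK2), if $s([\mu]) = r([\nu])$ then using the multiplicativity of $h$ and of the original $s$-family,
\[
t_{[\mu]} t_{[\nu]} = h(d(\mu)+d(\nu))^{-1} s_{[\mu][\nu]} = h(d(\mu\nu))^{-1} s_{[\mu\nu]} = t_{[\mu\nu]}.
\]
For (CK3), since $|h(d(\lambda))|=1$ we have $t_{[\lambda]}^* t_{[\lambda]} = s_{[\lambda]}^* s_{[\lambda]} = s_{[s(\lambda)]} = t_{[s(\lambda)]}$. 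For (CK4), every $[\lambda] \in [v]\Gamma^p$ has the same degree $p$, so the scalars cancel in the sum:
\[
\sum_{[\lambda] \in [v]\Gamma^p} t_{[\lambda]} t_{[\lambda]}^* = \sum_{[\lambda] \in [v]\Gamma^p} s_{[\lambda]} s_{[\lambda]}^* = s_{[v]} = t_{[v]}.
\]
By the universal property of $C^*(\Gamma)$, there is a $*$-homomorphism $\alpha_h : C^*(\Gamma) \to C^*(\Gamma)$ with $\alpha_h(s_{[\lambda]}) = t_{[\lambda]}$ for all $[\lambda]$.

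It remains to see that $h \mapsto \alpha_h$ is a continuous group homomorphism from $\Per(\Lambda)^\perp$ into $\operatorname{Aut}(C^*(\Gamma))$. Comparing on generators, $\alpha_{h_1 h_2}(s_{[\lambda]}) = (h_1 h_2)(d(\lambda))^{-1} s_{[\lambda]} = \alpha_{h_1}(\alpha_{h_2}(s_{[\lambda]}))$, and since the $s_{[\lambda]}$ generate $C^*(\Gamma)$ this gives $\alpha_{h_1 h_2} = \alpha_{h_1} \circ \alpha_{h_2}$; taking $h_2 = h^{-1}$ shows each $\alpha_h$ is an automorphism. For continuity in the point-norm topology, each $h \mapsto h(d(\lambda))^{-1}$ is continuous on $\Per(\Lambda)^\perp$, so $h \mapsto \alpha_h(s_{[\lambda]})$ is continuous; an $\varepsilon/3$ argument using the uniform bound $\|\alpha_h\| \le 1$ then extends continuity from the $*$-algebra generated by the $s_{[\lambda]}$ to all of $C^*(\Gamma)$.

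There is no real obstacle here: the entire argument is a gauge-action-style application of universality, and the only place where the hypothesis $h \in \Per(\Lambda)^\perp$ (rather than arbitrary $h \in \Hom(\ZZ^k,\TT)$) is used is in the well-definedness of $t_{[\lambda]}$ on the quotient $\Gamma = (H\Lambda)/\negthickspace\sim$. In effect, $\alpha$ is nothing other than the natural gauge action of the Pontryagin dual $\widehat{\ZZ^k/\Per(\Lambda)} \cong \Per(\Lambda)^\perp$ on the $P$-graph algebra $C^*(\Gamma)$, lifted back through the identification $P \subseteq \ZZ^k/\Per(\Lambda)$.
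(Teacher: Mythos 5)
Your proposal is correct and follows essentially the same route as the paper: check that $h(d(\lambda))$ depends only on $[\lambda]$, verify that the rescaled family is a Cuntz--Krieger $\Gamma$-family, invoke universality, and finish with the group-homomorphism and $\varepsilon/3$ continuity arguments. The only difference is cosmetic: you deduce well-definedness directly from the definition $\Per(\Lambda)=\{d(\mu)-d(\nu):\mu\sim\nu\}$ (valid, since $H$ hereditary makes $\sim$ on $H\Lambda$ the restriction of $\sim$ on $\Lambda$), whereas the paper routes through the shift-map characterisation of $\Per(\Lambda)$; you also spell out the (CK1)--(CK4) checks that the paper leaves implicit.
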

\begin{proof}
Fix $h\in \Per(\Lambda)^\bot$ and $\lambda,\mu\in H\Lambda$ such that $[\lambda]=[\mu]$.
We prove that $h(d(\lambda))=h(d(\mu))$. Since $\lambda\sim \mu$ we have
$s(\lambda)=s(\mu)$ and $\lambda x = \mu x$ for all $x\in s(\lambda)(H\Lambda)^{\infty}$.
Set $w=s(\lambda)\in H$ (since $H$ is hereditary). For any $x\in w\Lambda^{\infty}$ we
have $x\in s(\lambda)(H\Lambda)^{\infty}$ so
$\sigma^{d(\lambda)}(x)=\sigma^{d(\lambda)+d(\mu)}(\mu
x)=\sigma^{d(\lambda)+d(\mu)}(\lambda x)=\sigma^{d(\mu)}(x)$. In particular
$d(\lambda)-d(\mu)\in \Per(\Lambda)$. Since $h\in \Per(\Lambda)^\bot$ we see that
$h(d(\lambda)-d(\mu))=1$, so $h(d(\lambda))=h(d(\mu))$.

Fix $h\in \Per(\Lambda)^\bot$. By the preceding paragraph, for $[\lambda]\in \Gamma$, we
can define
$$t_{[\lambda]}:=h(d(\lambda))^{-1}s_{[\lambda]}.$$
Let $P:=\{m+\Per(\Lambda) : m\in \NN^k\}\subseteq \ZZ^k/\Per(\Lambda)$. Since $\Gamma$ is
a $P$-graph and $C^*(\Gamma)$ is the universal algebra generated by a Cuntz--Krieger
$\Gamma$-family $\{s_{[\lambda]} : [\lambda]\in \Gamma=(H\Lambda)/\negthickspace\sim\}$,
the set $\{t_{[\lambda]} : [\lambda]\in \Gamma\}$ is also a Cuntz--Krieger
$\Gamma$-family. So the universal property yields a homomorphism $\alpha_h :
C^*(\Gamma)\to C^*(\Gamma)$ such that
$\alpha_h(s_{[\lambda]})=t_{[\lambda]}=h(d(\lambda))^{-1}s_{[\lambda]}$.

For each $g,h\in \Per(\Lambda)^\bot$, $\alpha_{hg}=\alpha_h\circ \alpha_g$ and
$\alpha_{hh^{-1}}=\id_{C^*(\Gamma)}$ on the generators of $C^*(\Gamma)$, and hence on all
of $C^*(\Gamma)$. It follows that $h\mapsto \alpha_h$ is a group homomorphism from
$\Per(\Lambda)^\bot$ into the automorphisms of $C^*(\Gamma)$.

Finally we show $h\mapsto \alpha_h(a)$ is continuous for each $a\in C^*(\Gamma)$. Let
$a=s_{[\lambda]}$ for some $\lambda\in H\Lambda$ and suppose $h_i\to h$ in
$\Per(\Lambda)^\bot$. By definition of the topology on $\Hom(\ZZ^k,\TT)$ we have
$h_i(d(\lambda))\to h(d(\lambda))$ in $\TT$, and so $\alpha_{h_i}(a)\to \alpha_{h}(a)$ in
$C^*(\Gamma)$. Since addition is continuous, we see that $\alpha_{h_i}(a)\to
\alpha_{h}(a)$ for $a\in\espan\{t_\eta t_\nu^* : \eta,\nu\in\Gamma\}$; an
$\varepsilon/3$-argument then gives continuity for each $a\in C^*(\Gamma)$.
\end{proof}

Following \cite[p.~100]{Wil} let $X$ be a right $G$-space and $(A,G,\alpha)$ a dynamical
system. If $f : X\to A$ is a continuous function such that
\begin{align}
\label{eqn2.1}
f(x\cdot s) = \alpha_{s}^{-1}(f(x)), \quad\text{ for all } x\in X  \textrm{ and } s\in G	
\end{align}
then $x\mapsto \|f(x)\|$ is continuous on the orbit space $X/G:=\{x\cdot G : x\in X\}$.
The \emph{induced algebra} is
\[
\Ind_{G}^{X}(A, \alpha)
    = \{f\in C_b(X,A): f \textrm{ satisfies~\eqref{eqn2.1} and }
            x\cdot G \mapsto \|f(x)\| \textrm{ is in }C_0(X/G)\}.
\]
For $n\in \NN^k$ and $z\in \TT^k$, we write $z^n$ for $\prod_{i=1}^k z_i^{n_i}$.

In what follows we identify $\Per(\Lambda)^\bot$ with $\{z\in \TT^k : z^n=1 \textrm{ for
all } n\in \Per(\Lambda)\}\subseteq \TT^k$. Since $\Per(\Lambda)^\bot$ acts on $\TT^k$ by
right multiplication $z\cdot w:= zw$ \cite[Example~3.34]{Wil}, $\TT^k$ is a right
$\Per(\Lambda)^\bot$-space. So, using the action $\alpha : \Per(\Lambda)^\bot \to
\operatorname{Aut}(C^*(\Gamma))$ of Lemma~\ref{lem2.7},  we can form the induced algebra
$\Ind(\alpha):=\Ind_{\Per(\Lambda)^\bot}^{\TT^k}(C^*(\Gamma),\alpha)$.

Recall that if $\Lambda$ is a row-finite $k$-graph with no sources, then the universal
property of $C^*(\Lambda)$ ensures that it carries a canonical action $\gamma$, called
the \emph{gauge action}, of $\TT^k$ satisfying $\gamma_z(s_\lambda) = z^{d(\lambda)}
s_\lambda$ (see \cite{KumPas}).

\begin{lemma}\label{lem2.8}
Let $\Lambda$, $H$, $\alpha$ and $\Gamma$ be as in Lemma~\ref{lem2.7}. Then there exists
an isomorphism $\pi : C^*(H\Lambda)\to \Ind(\alpha)$ such that
$\pi(s_\lambda)(z)=z^{d(\lambda)}s_{[\lambda]}$ for each $\lambda\in H\Lambda$ and $z\in
\TT^k$. We have
\[
\Prim(C^*(H\Lambda))\cong \Big(\TT^k \times \Prim(C^*(\Gamma))\Big)/\Per(\Lambda)^\bot,
\]
where the action of $\Per(\Lambda)^\bot$ on $\TT^k \times \Prim(C^*(\Gamma))$ is given by
$(z,I)\cdot w=(zw,\alpha_{w^{-1}}(I))$.
\end{lemma}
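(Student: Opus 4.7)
The plan is to construct $\pi$ via the universal property of $C^*(H\Lambda)$, prove injectivity with the gauge-invariant uniqueness theorem, deduce surjectivity from a spectral-subspace comparison, and then read off the primitive-ideal description from standard results on induced algebras.

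First I would define, for each $\lambda\in H\Lambda$, the function $t_\lambda\colon \TT^k\to C^*(\Gamma)$ by $t_\lambda(z):=z^{d(\lambda)}s_{[\lambda]}$. It is continuous and bounded, and the equivariance condition~\eqref{eqn2.1} reduces to the identity $\alpha_w^{-1}(s_{[\lambda]})=w^{d(\lambda)}s_{[\lambda]}$, which is immediate from Lemma~\ref{lem2.7} once $\Per(\Lambda)^\bot$ is identified with its image in $\TT^k$. Since $\|t_\lambda(z)\|=\|s_{[\lambda]}\|$ is constant and $\TT^k$ is compact, the $C_0$-condition on the orbit space is automatic, so $t_\lambda\in \Ind(\alpha)$. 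I would then verify (CK1)--(CK4) for $\{t_\lambda\}_{\lambda\in H\Lambda}$ pointwise. The only subtle check is (CK4): its left-hand side is $t_v(z)=s_{[v]}$, which by the corresponding relation in $C^*(\Gamma)$ (indexed by $[v]\Gamma^{n+\Per(\Lambda)}$) equals $\sum_{[\mu]\in [v]\Gamma^{n+\Per(\Lambda)}}s_{[\mu]}s_{[\mu]}^*$; the right-hand side is $\sum_{\lambda\in v(H\Lambda)^n} s_{[\lambda]}s_{[\lambda]}^*$. These agree provided $\lambda\mapsto [\lambda]\colon v(H\Lambda)^n \to [v]\Gamma^{n+\Per(\Lambda)}$ is a bijection: surjectivity is the defining property of $v\in H_{\Per}$, and injectivity follows because $\mu\sim\nu$ with $\mu,\nu\in v(H\Lambda)^n$ forces $\mu=(\mu x)(0,n)=(\nu x)(0,n)=\nu$ for any $x\in s(\mu)(H\Lambda)^\infty$. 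The universal property of $C^*(H\Lambda)$ then produces $\pi$ with the stated formula.

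For injectivity, I would equip $\Ind(\alpha)$ with the $\TT^k$-action $\tilde\gamma_z(f)(w):=f(zw)$---this preserves $\Ind(\alpha)$ because the $\Per(\Lambda)^\bot$-equivariance of $f$ is with respect to the right action---and observe that $\pi$ intertwines $\tilde\gamma$ with the gauge action $\gamma$ on $C^*(H\Lambda)$. Vertex projections in $C^*(\Gamma)$ are nonzero by the gauge-invariant uniqueness theorem for $P$-graph algebras proved in \cite{CarKanShoSim}, so $\pi(s_v)\neq 0$ for every $v\in H$, and the gauge-invariant uniqueness theorem for $k$-graphs \cite{KumPas} then forces $\pi$ to be injective.

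Surjectivity is the main obstacle. I would argue via the $\TT^k$-spectral subspaces: for $n\in\ZZ^k$, the $n$-th spectral subspace of $C^*(H\Lambda)$ is $\ospan\{s_\mu s_\nu^* : d(\mu)-d(\nu)=n\}$, while that of $\Ind(\alpha)$ consists of functions $f(z)=z^n g$ with $g\in C^*(\Gamma)$ satisfying $\alpha_w(g)=w^{-n}g$ for all $w\in\Per(\Lambda)^\bot$---equivalently, the closed span of $z\mapsto z^n s_{[\mu']}s_{[\nu']}^*$ with $s([\mu'])=s([\nu'])$ and $d([\mu'])-d([\nu'])=n+\Per(\Lambda)$. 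Using that $H=H_{\Per}$ allows one to adjust degrees by elements of $\Per(\Lambda)$ (working with suitable common extensions in $v(H\Lambda)^m$), one can lift each such pair $[\mu'],[\nu']$ to honest representatives $\mu,\nu\in H\Lambda$ with $d(\mu)-d(\nu)=n$; this shows $\pi$ surjects the $n$-th spectral subspace of $C^*(H\Lambda)$ onto that of $\Ind(\alpha)$. Assembling via the faithful conditional expectations onto the gauge fixed-point subalgebras yields surjectivity of $\pi$. The primitive-ideal description then follows from the standard theory of induced algebras in \cite[Chapter~6]{Wil}: since $\Per(\Lambda)^\bot$ is a closed subgroup of $\TT^k$ acting freely and hence properly by translation, $\Prim(\Ind(\alpha))$ is naturally homeomorphic to $(\TT^k\times\Prim(C^*(\Gamma)))/\Per(\Lambda)^\bot$ with the action $(z,I)\cdot w=(zw,\alpha_{w^{-1}}(I))$.
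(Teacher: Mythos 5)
Your construction of the Cuntz--Krieger family $\{t_\lambda\}$, the verification of (CK4) via the bijection $\lambda\mapsto[\lambda]$ from $v(H\Lambda)^n$ onto $v\Gamma^{n+\Per(\Lambda)}$, the injectivity argument via the left-translation action together with the gauge-invariant uniqueness theorem, and the appeal to Williams for the primitive-ideal description of an induced algebra all match the paper's proof. Where you genuinely diverge is surjectivity. The paper regards $\Ind(\alpha)$ as a $C_0(\TT^k/\Per(\Lambda)^\bot)$-algebra and invokes \cite[Proposition~C.24 and Theorem~C.26]{Wil}: it suffices to check that $\im(\pi)$ is invariant under multiplication by the functions $\varepsilon_n$, $n\in\Per(\Lambda)$, which generate $C_0(\TT^k/\Per(\Lambda)^\bot)$, and that evaluation at each $z\in\TT^k$ has dense range. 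You instead decompose both algebras into spectral subspaces for their $\TT^k$-actions and show that $\pi$ surjects onto each one, then use density of the span of the spectral subspaces. Both arguments ultimately run on the same combinatorial engine---adjusting degrees within a $\Per(\Lambda)$-coset using the defining property of $H_{\Per}$, i.e.\ \cite[Theorem~4.2(3)]{CarKanShoSim}, after a common extension to make the target degree nonnegative; your phrase ``suitable common extensions'' is exactly where the paper's computation with the bijection $\theta$ and the (CK4)-expansion $s_{[\mu']}s_{[\nu']}^* = \sum_\eta s_{[\mu'\eta]}s_{[\nu'\eta]}^*$ lives, and that expansion is genuinely needed to handle $n$ with negative coordinates. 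Your route is arguably more elementary in that it avoids the $C_0(X)$-algebra machinery, at the cost of having to justify that each spectral subspace of $C^*(\Gamma)$ for the $\Per(\Lambda)^\bot$-action is the closed span of the homogeneous generators it contains and that the spectral subspaces of $\Ind(\alpha)$ have dense span; the paper outsources that bookkeeping to Williams's section-algebra results. Either way the lemma is proved.
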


\begin{proof}
For each $\lambda\in H\Lambda$ define $t_{\lambda} : \TT^k\to C^*(\Gamma)$ by
$t_{\lambda}(z)= z^{d(\lambda)}s_{[\lambda]}$. We show that each $t_{\lambda}$ belongs to
the $C^*$-algebra $\Ind(\alpha)$ described above.

For fixed $\lambda\in H\Lambda$, $z\mapsto z^{d(\lambda)}$ is continuous from $\TT^k$ to
$\TT$, so $t_\lambda$ belongs to $C(\TT^k,C^*(\Gamma))$. Fix $z\in \TT^k$ and $w\in
\Per(\Lambda)^\bot$. We have
\begin{align*}
\alpha_{w}^{-1}(t_\lambda(z))&= \alpha_{w}^{-1}(z^{d(\lambda)}s_{[\lambda]})
= z^{d(\lambda)}\alpha_{ w^{-1} }(s_{[\lambda]})\\
&= z^{d(\lambda)}w^{d(\lambda)}s_{[\lambda]}=(zw)^{d(\lambda)}s_{[\lambda]}=t_\lambda(zw)=t_\lambda(z\cdot w).
\end{align*}
So $t_\lambda$ satisfies $(\ref{eqn2.1})$. The function on $\TT^k/\Per(\Lambda)^\bot$
given by $z\cdot \Per(\Lambda)^\bot \mapsto \|t_\lambda(z)\|$ is constant, hence
continuous. Since $\TT^k/\Per(\Lambda)^\bot$ is compact and Hausdorff (see
\cite[p.~101]{Wil}), the map $z\cdot \Per(\Lambda)^\bot \mapsto \|t_\lambda(z)\|$ belongs
to $C_0(\TT^k/\Per(\Lambda)^\bot)$, so $t_\lambda\in \Ind(\alpha)$.

Since $H\Lambda$ is a $k$-graph and $C^*(H\Lambda)$ is the universal algebra generated by
the Cuntz--Krieger family $\{s_{\lambda} : \lambda\in H\Lambda\}$ it easily follows that
$\{t_{\lambda} : \lambda\in H\Lambda\}$ satisfies (CK1)--(CK3). To verify (CK4) fix $v\in
H$ and $n\in \NN^k$, let $p:=n+\Per(\Lambda)$, and define $\varphi : v\Lambda^n\to
v\Gamma^p$, by $\varphi(\lambda)=[\lambda]$. It suffices to show that $\varphi$ is a
bijection. To check that $\varphi$ is surjective (this is non-trivial because
$[\lambda]\in v\Gamma^p$ $\not\Rightarrow$ $d(\lambda)=n$), choose $[\lambda]\in
v\Gamma^p$. Since $\tilde{d}([\lambda])=d(\lambda)+\Per(\Lambda)$ there exists $\mu\sim
\lambda$ such that $d(\mu)=n$ and $r(\mu)=r(\lambda)$ (see
\cite[Theorem~4.2]{CarKanShoSim}). We obtain $\mu\in v\Lambda^n$ and
$\varphi(\mu)=[\lambda]$. To check that $\varphi$ is injective, suppose that
$\varphi(\lambda)=\varphi(\mu)$ for some $\lambda,\mu\in v\Lambda^n$. Since $\lambda\sim
\mu$ and $d(\lambda)=d(\mu)$ it follows from \cite[Theorem~4.2]{CarKanShoSim} that
$\lambda=\mu$. So $\varphi$ is a bijection as claimed.

By the universal property of $C^*(H\Lambda)$ there exists a homomorphism $\pi :
C^*(H\Lambda)\to \Ind(\alpha)$ such that
\begin{align}\label{eqn3.2}
\pi(s_\lambda)=t_\lambda= \left(z\mapsto z^{d(\lambda)}s_{[\lambda]}\right), \quad\text{ for each }\lambda\in H\Lambda.
\end{align}
We now show $\pi$ is injective. Similarly to \cite[Lemma~3.48]{Wil}, we have a dynamical
system $(\Ind(\alpha),\TT^k,\lt)$ where $\lt_{z}(f)(x)=f(xz)$ for all $z\in \TT^k$ and
$f\in \Ind(\alpha)$. Writing $\gamma$ for the gauge action on $C^*(H\Lambda)$, for
$x,z\in \TT^k$ and $\lambda\in H\Lambda$, we have
\begin{align*}
\lt_z\circ\, \pi(s_\lambda)(x)&=\lt_z(t_{\lambda})(x)=t_{\lambda}(xz)=(xz)^{d(\lambda)}s_{[\lambda]}=z^{d(\lambda)} x^{d(\lambda)}s_{[\lambda]}\\
&=z^{d(\lambda)}t_\lambda(x)=z^{d(\lambda)}\pi(s_\lambda)(x)=\pi(z^{d(\lambda)} s_\lambda)(x)=\pi\circ\gamma_{z}(s_\lambda)(x).
\end{align*}
So the homomorphisms $\lt_z\circ\, \pi$ and $\pi\circ\gamma_z$ agree on generators, and
hence are equal. Since each $t_\lambda$ is non-zero it follows from the gauge-invariant
uniqueness theorem \cite[Theorem~3.4]{KumPas} that $\pi$ is injective.

We now show that $\pi$ is surjective. By \cite[Proposition~3.49]{Wil}, $\Ind(\alpha)$ can
be regarded as a $C_0(\TT^k/\Per(\Lambda)^\bot)$-algebra in the sense of \cite{Wil},
where
\begin{align}
\label{b.ref1}
\varphi \cdot f(z) = \varphi(z\cdot \Per(\Lambda)^\bot)f(z)
\end{align}
for all $\varphi\in C_0(\TT^k/\Per(\Lambda)^\bot)$, $f\in\Ind(\alpha)$ and $z\in \TT^k$.
In particular it follows from \cite[Proposition~C.24 and Theorem~C.26]{Wil} that $\pi$ is
surjective provided the following two properties hold:
\begin{itemize}
\item[(i)] If $f\in \im(\pi)$ and $\varphi\in C_0(\TT^k/\Per(\Lambda)^\bot)$, then
    $\varphi \cdot f\in \im(\pi)$.
\item[(ii)] For each $z\in \TT^k$, $\{f(z) : f\in \im(\pi)\}$ is dense in
    $C^*(\Gamma)$.
\end{itemize}
To prove (i), observe that for $n \in \Per(\Lambda)$, the map $z \mapsto z^n$ vanishes on
$\Per(\Lambda)^\bot$, and so descends to an element $\varepsilon_n$ of
$C_0(\TT^k/\Per(\Lambda)^\bot)$ satisfying
\begin{align}
\label{b.ref2}
\varepsilon_n(z\cdot \Per(\Lambda)^\bot) = z^n.
\end{align}
The set $\{\varepsilon_n: n \in \Per(\Lambda)\}$ generates
$C_0(\TT^k/\Per(\Lambda)^\bot)$ as a $C^*$-algebra. So (i) follows once we prove that
$\im(\pi)$ is invariant for the action on every $\varepsilon_n$. For $f\in\Ind(\alpha)$ and $z
\in \TT^k$, equations \eqref{b.ref1}~and~\eqref{b.ref2} give
$$(\varepsilon_n \cdot f)(z) = \varepsilon_n(z\cdot \Per(\Lambda)^\bot)f(z)=z^n f(z).$$
Fix $n \in \Per(\Lambda)$ and $f \in \im(\pi)$. We show that the function $z \mapsto z^n
f(z)$ belongs to $\im(\pi)$. Write $n=p-q$ with $p,q\in \NN^k$ and fix $\lambda,\mu\in
H\Lambda$. By \cite[Theorem~4.2(3)]{CarKanShoSim} there exist a bijection $\theta :
s(\lambda)\Lambda^p\to s(\lambda)\Lambda^q$ such that $\nu\sim \theta(\nu)$ for all $\nu
\in s(\lambda)\Lambda^p$. Since $\{s_{[\lambda]} : [\lambda]\in \Gamma\}$ satisfies (CK4)
and the map $\nu\mapsto[\nu]$ is a bijection from $s(\lambda)\Lambda^p$ to
$s(\lambda)\Gamma^{p+\Per(\Lambda)}$ we have,
\begin{align*}
t_{s(\lambda)}(z)
    &= z^{d(s(\lambda))}s_{[s(\lambda)]}
     =s_{[s(\lambda)]}
     =\sum_{[\nu]\in s(\lambda)\Gamma^{p+\Per(\Lambda)}} s_{[\nu]}s_{[\nu]}^*\\
    &=\sum_{\nu\in s(\lambda)\Lambda^{p}} s_{[\nu]}s_{[\nu]}^*
     =\sum_{\nu\in s(\lambda)\Lambda^p} s_{[\nu]}s_{[\theta(\nu)]}^*\\
    &=\sum_{\nu\in s(\lambda)\Lambda^p} z^{-d(\nu)}t_{\nu}(z)(z^{-d(\theta(\nu))}t_{\theta(\nu)}(z))^*\\
    &=z^{-(p-q)}\sum_{\nu\in s(\lambda)\Lambda^p}t_{\nu}t_{\theta(\nu)}^*(z).
\end{align*}
Hence $\left(z\mapsto z^n t_\lambda t_\mu^*(z)\right)=\left(z\mapsto \big(\sum_{\nu\in
s(\lambda)\Lambda^p}t_{\lambda}t_{\nu}t_{\theta(\nu)}^*t_\mu^*\big)(z)\right)\in\im(\pi)$.
Finally using that $f\in \ospan\{t_\lambda t_\mu^* : \lambda,\mu\in H\Lambda\}$ we obtain
$(z \mapsto z^n f(z))\in\im(\pi)$.

To prove (ii), fix $z\in \TT^k$. For each $\lambda\in H\Lambda$ define
$f_\lambda:=z^{-d(\lambda)}t_{\lambda}\in \im(\pi)$. Using \eqref{eqn3.2} we get
$s_{[\lambda]}=f_\lambda(z)$, so $s_{[\lambda]}\in \{f(z) : f\in \im(\pi)\}$. Since
$\im(\pi)$ is a $C^*$-algebra and since $\ospan\{f_\lambda f_\mu^*(z) : \lambda,\mu\in
H\Lambda\}=C^*(\Gamma)$, the set $\{f(z) : f\in \im(\pi)\}$ is dense in $C^*(\Gamma)$. We
conclude that $C^*(H\Lambda)\cong \Ind(\alpha)$.

By \cite[p.~100]{Wil}, the primitive ideal space of the induced algebra $\Ind(\alpha)$ is
homeomorphic with the orbit space $(\TT^k \times \Prim(C^*(\Gamma)))/\Per(\Lambda)^\bot$,
where the right action of $\Per(\Lambda)^\bot$ is given by $(z,I)\cdot
w:=(zw,\alpha_{w^{-1}}(I))$ \cite[Lemma~2.8]{Wil}. In particular, since
$C^*(H\Lambda)\cong \Ind(\alpha)$, we obtain $\Prim(C^*(H\Lambda))\cong
\Prim(\Ind(\alpha))\cong(\TT^k \times \Prim(C^*(\Gamma)))/\Per(\Lambda)^\bot$.
\end{proof}

\begin{lemma}\label{lem2.10}
Let $\Lambda$ and $\Gamma$ be as in Lemma~\ref{lem2.7}. The formula $$\varphi((z,I)\cdot
\Per(\Lambda)^\bot)=z\cdot\Per(\Lambda)^\bot,$$ determines a continuous open surjection
$\varphi : \big(\TT^k \times \Prim(C^*(\Gamma))\big)/\Per(\Lambda)^\bot \to
\TT^k/\Per(\Lambda)^\bot$.
\end{lemma}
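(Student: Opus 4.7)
The plan is to realise $\varphi$ as the descent of the first-coordinate projection $p_1 : \TT^k \times \Prim(C^*(\Gamma)) \to \TT^k$ to orbit spaces, and then to read off its properties from standard facts about projections and quotients by continuous group actions. First I would verify that $p_1$ is equivariant for the $\Per(\Lambda)^\bot$-actions: since the action on the domain is $(z,I)\cdot w = (zw, \alpha_{w^{-1}}(I))$ and the action on $\TT^k$ is right multiplication, we have $p_1((z,I)\cdot w) = zw = p_1(z,I)\cdot w$. Hence $p_1$ descends to a well-defined map on orbit spaces, and this descended map is exactly $\varphi((z,I)\cdot \Per(\Lambda)^\bot) = z\cdot \Per(\Lambda)^\bot$, so the formula does specify a function.

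Writing $q$ and $q_1$ for the respective quotient maps, the equivariance gives $\varphi \circ q = q_1 \circ p_1$, which is continuous. Since $q$ is a quotient map, this forces $\varphi$ to be continuous. Surjectivity is immediate: given $z\in \TT^k$, pick any $I \in \Prim(C^*(\Gamma))$ (which exists since $\Gamma$ is nonempty, so $C^*(\Gamma) \neq 0$), and then $\varphi((z,I)\cdot \Per(\Lambda)^\bot) = z\cdot\Per(\Lambda)^\bot$.

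The only point requiring a little care is openness. My plan is as follows: given an open set $V$ in $\big(\TT^k \times \Prim(C^*(\Gamma))\big)/\Per(\Lambda)^\bot$, let $W := q^{-1}(V)$, which is open by continuity of $q$. Since $\varphi \circ q = q_1 \circ p_1$ and $q$ is surjective, $\varphi(V) = q_1(p_1(W))$. The projection $p_1$ is open because it is a coordinate projection out of a product, so $p_1(W)$ is open in $\TT^k$. The quotient map $q_1$ is open because, for any open $U \subseteq \TT^k$, the saturation $q_1^{-1}(q_1(U)) = \bigcup_{w \in \Per(\Lambda)^\bot} Uw$ is a union of translates of $U$, hence open. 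Combining these, $\varphi(V) = q_1(p_1(W))$ is open.

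I do not anticipate any serious obstacle; the argument is essentially a diagram chase plus the standard fact that product projections and quotient maps by group actions are open. The only place where one might slip is the well-definedness step, which is why I would make the equivariance of $p_1$ the very first thing I record.
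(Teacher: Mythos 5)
Your proof is correct and follows essentially the same route as the paper: both realise $\varphi$ as the descent of the first-coordinate projection to orbit spaces, deduce well-definedness from equivariance, continuity from the quotient topology, and openness from the fact that coordinate projections and orbit maps of group actions are open. The only cosmetic difference is that you spell out why the orbit map $q_1$ is open (saturations are unions of translates) where the paper cites a lemma of Williams.
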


\begin{proof}
For completeness let us mention that $\Prim(C^*(\Gamma))$ is non-empty (for example, the
zero ideal in $C^*(\Gamma)$ is primitive) so $\varphi$ makes sense. If
$(x_1,I_1)\cdot\Per(\Lambda)^\bot=(x_2,I_2)\cdot\Per(\Lambda)^\bot$ for $(x_i,I_i)\in
\TT^k \times \Prim(C^*(\Gamma))$, then $x_1w=x_2$ for some $w\in \Per(\Lambda)^\bot$. So
the map $\varphi$ is well defined.

Let $\psi : \TT^k \times \Prim(C^*(\Gamma))\to \TT^k$ be the projection onto the first
coordinate and write $p$ for both the quotient map $\TT^k \to \TT^k/\Per(\Lambda)^\bot$
and the orbit map $\TT^k \times \Prim(C^*(\Gamma))\to (\TT^k \times
\Prim(C^*(\Gamma)))/\Per(\Lambda)^\bot$ (cf.~\cite[Definition 3.21]{Wil}). Then the
diagram
\[
\xymatrix{{(\TT^k \times \Prim(C^*(\Gamma)))/\Per(\Lambda)^\bot} \ar[rrr]^-\varphi &&& \TT^k/\Per(\Lambda)^\bot \\
\TT^k \times \Prim(C^*(\Gamma)) \ar[rrr]^-\psi \ar[u]^p  &&& \TT^k \ar[u]^p},
\]
commutes. The topologies on the orbit spaces
\[
(\TT^k \times \Prim(C^*(\Gamma)))/\Per(\Lambda)^\bot \quad\text{ and }\quad
\TT^k/\Per(\Lambda)^\bot
\]
are the weakest topologies making the maps $p$ continuous. So both maps $p$ are
continuous and open (see \cite[Lemma~3.25]{Wil}). Now $\varphi$ is continuous and open
because $\psi$ is. The map $\varphi$ is surjective because $\Prim(C^*(\Gamma))$ is
non-empty.
\end{proof}

We will need an elementary lemma from topology. This is standard, but we give a complete
statement and proof for convenience.

\begin{lemma}\label{lem2.4}
Let $X,Y$ be topological spaces and let $\varphi : X \to Y$ be a continuous open
surjective map. If $X$ admits a basis for its topology consisting of compact and open
sets, then so does $Y$.
\end{lemma}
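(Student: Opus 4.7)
The plan is to show that if $\Bb$ is a basis of compact open sets for $X$, then its image $\varphi(\Bb) := \{\varphi(B) : B \in \Bb\}$ is a basis of compact open sets for $Y$. This requires three checks: each $\varphi(B)$ is open, each $\varphi(B)$ is compact, and $\varphi(\Bb)$ is a basis for the topology on $Y$.

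Openness is immediate because $\varphi$ is an open map, and compactness is immediate because continuous images of compact sets are compact. These two facts use only two of the three hypotheses on $\varphi$.

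The basis property is the only step that uses surjectivity, and it is the step to handle carefully. Given an open set $U \subseteq Y$ and a point $y \in U$, I would use surjectivity to choose $x \in X$ with $\varphi(x) = y$. Then $\varphi^{-1}(U)$ is open in $X$ by continuity and contains $x$, so there exists $B \in \Bb$ with $x \in B \subseteq \varphi^{-1}(U)$. Applying $\varphi$ yields $y \in \varphi(B) \subseteq U$, which is exactly the basis condition.

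There is no real obstacle here; the lemma is a soft topological exercise, and the main care is just to invoke each of the three hypotheses (continuous, open, surjective) in the right spot. No calculation is required.
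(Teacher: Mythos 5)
Your proposal is correct and follows essentially the same route as the paper: both take the image $\{\varphi(B) : B \in \Bb\}$ of the compact open basis and check it is a compact open basis for $Y$, using openness of $\varphi$ for openness, continuity for compactness, and surjectivity plus the pullback $\varphi^{-1}(U)$ for the basis property. Your direct verification of the pointwise basis criterion is in fact slightly more streamlined than the paper's two-step argument (first showing the image family satisfies the basis axioms, then showing the generated topology agrees with the given one), but the underlying idea is identical.
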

\begin{proof}
Let $\BB$ be a basis for the topology on $X$ consisting of compact and open sets. Set
$\FF:=\{\varphi(B):B\in\BB\}$. Since $\varphi$ is continuous and open each element of
$\FF$ is compact and open. Since $\varphi$ is surjective
$Y=\varphi(\bigcup_{B\in\BB}B)=\bigcup_{F\in\FF}F$, so $\FF$ is an open cover of $Y$.
Suppose $y\in F_1\cap F_2$ for $F_1,F_2\in \FF$. Take $x\in X$ such that $\varphi(x)=y$.
Since $x\in \varphi^{-1}(y)\subseteq \varphi^{-1}(F_i)$ and $F_i$ is open, there exists
$B\in \BB$ such that $x\in B\subseteq \varphi^{-1}(F_1)\cap \varphi^{-1}(F_2)$. With
$F:=\varphi(B)\in \FF$ we get $y=\varphi(x)\in \varphi(B)=F$ and $F=\varphi(B)\subseteq
\varphi(\varphi^{-1}(F_i))=F_i$. So $\FF$ is a basis for a topology on $Y$.

To see that this topology coincides with the given topology on $Y$, observe that each set
in $\FF$ is open, and for any open set $V\subseteq Y$, the preimage $\varphi^{-1}(V)$ is
open, hence a union of elements $B_i \in \BB$. Now each $\varphi(B_i)\in \FF$ and
$V=\varphi(\varphi^{-1}(V))=\varphi(\bigcup_iB_i))=\bigcup_i\varphi(B_i)$.
\end{proof}

We write $\mathcal{M}(A)$ for the multiplier algebra of a $C^*$-algebra $A$.

\begin{lemma}\label{lem2.6}
Let $\Lambda$ be a row-finite $k$-graph with no sources. Suppose that $C^*(\Lambda)$ has
topological dimension zero. Suppose that $H \subseteq \Lambda^0$ is hereditary. Let $I$
be the ideal of $C^*(\Lambda)$ generated by $\{p_v : v \in H\}$, and let $P_H := \sum_{v
\in H} p_v \in \mathcal{M}(I)$. There is an isomorphism $C^*(H\Lambda) \cong P_H I P_H$
carrying the generator $s_\lambda$ of $C^*(H \Lambda)$ to the corresponding generator
$s_\lambda$ of $C^*(\Lambda)$ for every $\lambda \in H\Lambda$, and $P_H I P_H$ is a full
corner of $I$. Furthermore, $C^*(H\Lambda)$ has topological dimension zero. If $H$ is
also saturated then $C^*(\Lambda\setminus \Lambda H)$ is isomorphic to $C^*(\Lambda)/I$
and has topological dimension zero.
\end{lemma}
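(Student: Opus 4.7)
The plan is to combine the standard realisation of $C^*(H\Lambda)$ as a full corner of $I$ with the permanence of topological dimension zero under passage to ideals, full corners, and quotients. I would dispatch the three claims in order: the corner isomorphism first, then topological dimension zero of $C^*(H\Lambda)$, and finally the quotient statement.

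First, I would build the corner isomorphism. The family $\{s_\lambda : \lambda \in H\Lambda\} \subseteq C^*(\Lambda)$ trivially satisfies (CK1)--(CK3), and it satisfies (CK4) because hereditariness of $H$ forces $v\Lambda^n = v(H\Lambda)^n$ for each $v \in H$ and $n \in \NN^k$. The universal property of $C^*(H\Lambda)$ therefore yields a $*$-homomorphism $\phi : C^*(H\Lambda) \to C^*(\Lambda)$ sending each generator to itself. Gauge-equivariance of $\phi$ combined with the gauge-invariant uniqueness theorem \cite[Theorem~3.4]{KumPas} gives injectivity. Using the standard description $I = \ospan\{s_\mu s_\nu^* : s(\mu) = s(\nu) \in H\}$, the cutdown $P_H s_\mu s_\nu^* P_H$ vanishes unless $r(\mu), r(\nu) \in H$, in which case heredity forces $\mu, \nu \in H\Lambda$; so $P_H I P_H$ is exactly the image of $\phi$. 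Fullness is immediate since $P_H I P_H$ contains the generators $\{p_v : v \in H\}$ of $I$.

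Next, I would invoke permanence properties. Topological dimension zero passes to ideals because $\Prim(I)$ is an open subspace of $\Prim(C^*(\Lambda))$, so any basis of compact open sets for the latter restricts to one for the former. A full corner of a $C^*$-algebra is Morita equivalent to the ambient algebra, and Morita equivalent $C^*$-algebras have homeomorphic primitive ideal spaces, so $C^*(H\Lambda) \cong P_H I P_H$ inherits topological dimension zero from $I$.

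For the quotient, I would invoke the standard isomorphism $C^*(\Lambda)/I \cong C^*(\Lambda \setminus \Lambda H)$ for saturated hereditary $H$ (see \cite{KumPas}), and then observe that $\Prim(C^*(\Lambda)/I)$ is homeomorphic to the closed subspace $\hull(I) \subseteq \Prim(C^*(\Lambda))$. For a closed subspace $C$ of a space $X$ with a basis $\Bb$ of compact open sets, each intersection $B \cap C$ with $B \in \Bb$ is open in $C$ and compact (a closed subset of a compact set is compact, even without Hausdorffness), giving $C$ a basis of compact open sets. The only real obstacle is bookkeeping: identifying $\phi(C^*(H\Lambda))$ on the nose with $P_H I P_H$, and checking carefully that each permanence statement holds in the non-Hausdorff Jacobson-topology setting.
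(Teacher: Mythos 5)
Your proposal is correct and follows the same route as the paper: realise $C^*(H\Lambda)$ as the full corner $P_H I P_H$ of the ideal $I$, then transport topological dimension zero through ideals, full corners (via Morita equivalence and homeomorphism of primitive ideal spaces), and quotients. The only difference is that you prove the ingredients directly---the corner isomorphism via the universal property and the gauge-invariant uniqueness theorem, and the permanence properties by elementary point-set arguments on $\Prim$---where the paper simply cites \cite[Theorem~5.2]{RaeSimYee} for the former and \cite{Dix}, \cite{BroPed2} and \cite{Con} for the latter; your verifications are sound.
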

\begin{proof}	
Recall from \cite[Proposition~3.2.1]{Dix} and \cite[Proposition~2.8]{BroPed2} that
topological dimension zero passes to ideals and quotients. It also passes to full corners
(since Morita equivalent $C^*$-algebras have homeomorphic primitive ideal spaces, see
\cite[p.~156]{Con}).

Let $H$ be any hereditary subset of $\Lambda^0$. By \cite[Theorem~5.2]{RaeSimYee},
$C^*(H\Lambda)$ is isomorphic to $P_H I P_H$. This is a full corner because $I$ is
generated by $P_H$. Since topological dimension zero passes to ideals and full corners,
we deduce that $C^*(H\Lambda)$ has topological dimension zero. If $H$ is saturated as
well, then $C^*(\Lambda\setminus \Lambda H)$ is isomorphic to $C^*(\Lambda)/I$, and hence
it, too, has topological dimension zero.
\end{proof}

\begin{lemma}\label{lem2.9}
Let $\Lambda$ be a row-finite $k$-graph with no sources. Suppose that $\Lambda$ is not
strongly aperiodic. Then there exists a (possibly empty) hereditary and saturated set
$H\subseteq {\Lambda}^{0}$ such that the vertex set of $\Lambda\setminus \Lambda H$ is a
maximal tail and $\Per(\Lambda\setminus \Lambda H)\neq \{0\}$.
\end{lemma}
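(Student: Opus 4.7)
The idea is to find a quotient of $\Lambda$ whose vertex set is a maximal tail and that retains a vertex with no aperiodic infinite path, and then use the Baire category theorem to turn pointwise periodicity into a uniform period.

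Since $\Lambda$ is not strongly aperiodic, I can choose a (possibly empty) saturated hereditary $H_0 \subsetneq \Lambda^0$ such that $\Lambda' := \Lambda \setminus \Lambda H_0$ fails to be aperiodic, and fix $v_0 \in (\Lambda')^0$ so that every $x \in v_0(\Lambda')^\infty$ satisfies $\sigma^m x = \sigma^n x$ for some $m \neq n$. Fix any $x_0 \in v_0(\Lambda')^\infty$ and set
\[
T := \{w \in (\Lambda')^0 : w\Lambda' x_0(n) \neq \emptyset \textrm{ for some } n \in \NN^k\}.
\]
I would show $T$ is a maximal tail of $\Lambda$. Condition (c) follows by composition of paths, and condition (a) is witnessed by $w = x_0(n_1 \vee n_2) \in T$. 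For condition (b), given $v \in T$ with $\lambda \in v\Lambda' x_0(n)$, one extends $\lambda$ by $x_0(n, n+e_i)$ if $d(\lambda)_i = 0$ and then factors off an initial $e_i$-edge whose source remains in $T$. Hereditarity of $H_0$ forces any $\Lambda$-path between vertices of $T \subseteq \Lambda^0 \setminus H_0$ to avoid $H_0$, so the maximal-tail conditions transfer from $\Lambda'$ to $\Lambda$. Set $H := \Lambda^0 \setminus T$; this is a saturated hereditary subset of $\Lambda^0$ containing $H_0$, and the quotient $\Gamma := \Lambda \setminus \Lambda H$ has $\Gamma^0 = T$, so $\Per(\Gamma)$ is well-defined.

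It remains to show $\Per(\Gamma) \neq \{0\}$. Every infinite path in $\Gamma$ from $v_0$ is in particular an infinite path in $\Lambda'$ from $v_0$, so each $y \in v_0 \Gamma^\infty$ satisfies $\sigma^m y = \sigma^n y$ for some pair $m \neq n$ depending on $y$. Hence
\[
v_0 \Gamma^\infty = \bigcup_{(m,n) \in \NN^k \times \NN^k,\, m \neq n} E_{m,n}, \qquad E_{m,n} := \{y \in v_0 \Gamma^\infty : \sigma^m y = \sigma^n y\},
\]
with each $E_{m,n}$ closed (continuity of $\sigma^m, \sigma^n$ and Hausdorffness of $\Gamma^\infty$) and the union countable. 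Since $\Gamma$ is row-finite with no sources, $v_0 \Gamma^\infty$ is compact Hausdorff, and the Baire category theorem yields a pair $m \neq n$ and a cylinder $Z(\mu) \cap v_0 \Gamma^\infty \subseteq E_{m,n}$ with $\mu \in v_0 \Gamma$. Setting $w := s(\mu)$ and choosing $\ell \in \NN^k$ with $\ell + m, \ell + n \geq d(\mu)$ componentwise, applying $\sigma^\ell$ and stripping the prefix $\mu$ gives $\sigma^{\ell+m-d(\mu)} y = \sigma^{\ell+n-d(\mu)} y$ for every $y \in w \Gamma^\infty$. By the description of $\Per(\Gamma)$ in \cite[Lemma~4.6]{CarKanShoSim}, this produces $0 \neq n - m \in \Per(\Gamma)$, completing the proof.

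The main obstacle is the Baire-category step: upgrading the pointwise statement ``every infinite path from $v_0$ has some period'' to the uniform statement ``all infinite paths extending a common prefix share a single period'' is precisely what is needed to match the uniform-in-$y$ witness appearing in the description of $\Per(\Gamma)$. The rest of the argument is a careful but routine combination of the maximal-tail construction $T = T_{x_0}$ and transferring hereditary/saturated properties between $\Lambda$ and the quotient $\Lambda'$.
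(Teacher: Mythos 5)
Your proof is correct, and its skeleton is the same as the paper's: pass to a quotient $\Lambda'=\Lambda\setminus\Lambda H_0$ that fails aperiodicity, build the maximal tail $T$ from the shift-tail class of a single infinite path $x_0$, and set $H:=\Lambda^0\setminus T$. The one genuine difference is the mechanism for producing a nonzero element of $\Per(\Lambda\setminus\Lambda H)$. The paper front-loads this by citing \cite[Lemma~3.2(iii)]{RobSim}, which already supplies a vertex $v$ and a \emph{fixed} pair $m\neq n$ with $\sigma^m(y)=\sigma^n(y)$ for \emph{every} $y\in v(\Lambda')^\infty$; since $v\in T$ and $v(\Lambda\setminus\Lambda H)^\infty\subseteq v(\Lambda')^\infty$, that same pair immediately lies in $\Per(\Lambda\setminus\Lambda H)$ with no further work. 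You instead start from the literal (pointwise) negation of aperiodicity and recover the uniform statement at the end by a Baire category argument on the compact Hausdorff space $v_0\Gamma^\infty$, writing it as a countable union of the closed equalisers $E_{m,n}$, extracting a cylinder $Z(\mu)\subseteq E_{m,n}$, and shifting past the prefix $\mu$ to localise at $w=s(\mu)$. This is a correct, self-contained replacement for the citation---in effect you reprove the relevant implication of \cite[Lemma~3.2]{RobSim}---at the cost of needing compactness of $v_0\Gamma^\infty$ (which uses that $H$ is saturated, so $\Gamma$ has no sources) and the containments $v_0\in T$ and $H\supseteq H_0$, which guarantee respectively that the Baire argument is applied to a nonempty space and that $v_0\Gamma^\infty\subseteq v_0(\Lambda')^\infty$; all of these hold for the reasons you indicate. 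The remaining steps (the maximal-tail verification and the appeal to the description of $\Per$ from \cite[Lemma~4.6]{CarKanShoSim}, which is legitimate because $\Gamma^0=T$ is a maximal tail) match the paper's argument.
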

\begin{proof}
Choose a (possibly empty) hereditary saturated set $H'\subseteq{\Lambda}^0$ such that
$\Lambda\setminus \Lambda H'$ is not aperiodic. By \cite[Lemma~3.2(iii)]{RobSim} there
exist a vertex $v\in \Lambda\setminus \Lambda H'$ and $n\neq m\in \NN^k$ such that
$\sigma^m(y)=\sigma^n(y)$ for every infinite path $y\in v(\Lambda\setminus \Lambda
H')^{\infty}$. Fix $x\in v(\Lambda\setminus \Lambda H')^{\infty}$, define the shift-tail
equivalence class
\begin{align}
\label{eqn2.4}
[x]:=\{\lambda\sigma^p(x) : p\in \NN^k, \lambda\in (\Lambda\setminus \Lambda H')x(p)\},
\end{align}
and let $T:=r([x])\subseteq \Lambda^0$ (possibly all of $\Lambda^0$) and
$H:=\Lambda^0\setminus T$.

We prove that $T\subseteq \Lambda^0$ is a maximal tail. First we consider property (a).
If $v,w\in r([x])$, say $v=r(\alpha\sigma^m(x))$, $w=r(\beta\sigma^n(x))$, then
$u=x(m\vee n) \in T$ satisfies $\alpha{x}(m,m\vee n)\in v\Lambda{u}$, and
$\beta{x}(n,m\vee n)\in w\Lambda{u}$. For property (b), fix $v\in r([x])$, say
$v=r(\alpha\sigma^m(x))$, and fix $i\leq k$. By replacing $\alpha$ with
$\alpha{x}(m,m+e_i)$ and $m$ with $m+e_i$, we may assume that $d(\alpha)\geq e_i$, say
$\alpha=f\alpha'$ with $f\in \Lambda^{e_i}$. Then $f\in v\Lambda^{e_i}$, and
$s(f)=r(\alpha'\sigma^m(x))\in r([x])$. Finally for property (c), fix $w\in r([x])$ and
$v\in \Lambda^0$ with $v\Lambda w\neq \emptyset$, say $\alpha\in v\Lambda{w}$. Write
$w=r(\beta\sigma^m(x))$. Then $v=r((\alpha\beta)\sigma^m(x))\in r([x])$.

Now $H\subseteq \Lambda^0$ is hereditary and saturated because it is the complement of a
maximal tail (see the proof of \cite[Theorem~3.12]{KanPas}). Since $T(\Lambda\setminus
\Lambda H)T=\Lambda\setminus \Lambda H$, the set $T=(\Lambda\setminus \Lambda H)^0$ is a
maximal tail in the $k$-graph $\Lambda\setminus \Lambda H$.

Since $\Per(\Lambda\setminus \Lambda H)=\{a-b: \textrm{ there exists } w\in
(\Lambda\setminus \Lambda H)^0 \textrm{ such that }\sigma^{a}(y)=\sigma^{b}(y)$ for all
$y\in w(\Lambda\setminus \Lambda H)^\infty\}$ and since $(\Lambda\setminus \Lambda
H)^0=r([x])\subseteq \{r(\lambda) : \lambda\in\Lambda\setminus \Lambda H'\}=
(\Lambda\setminus \Lambda H')^0$ we conclude that $n-m\in\Per(\Lambda\setminus \Lambda
H)\neq \{0\}$.
\end{proof}

\begin{proof}[Proof of Theorem~\ref{thm2.3}] As mentioned before,
(ii)$\Rightarrow$(i) follows from \cite[Theorem~4.2]{KanPas}. For (i)$\Rightarrow$(ii),
let $\Lambda'$ be a row-finite $k$-graph with no sources such that $C^*(\Lambda')$ has
topological dimension zero. We suppose that $\Lambda'$ is not strongly aperiodic, and
derive a contradiction. By Lemma~\ref{lem2.9} there is a hereditary and saturated set
$H'\subseteq {(\Lambda')}^{0}$ such that the vertex set $\Lambda^0$ of
$\Lambda:=\Lambda'\setminus \Lambda' H'$ is a maximal tail and such that $\Per(\Lambda)$
is nontrivial. Define $H:=H_{\Per}$ and $\Gamma:=(H\Lambda)/\negthickspace\sim$. By
Lemma~\ref{lem2.8}, $\Prim(C^*(H\Lambda))\cong \big(\TT^k \times
\Prim(C^*(\Gamma))\big)/\Per(\Lambda)^\bot$. By Lemma~\ref{lem2.10}, there is a
continuous open surjective map
$$\varphi : \big(\TT^k \times \Prim(C^*(\Gamma))\big)/\Per(\Lambda)^\bot \to \TT^k/\Per(\Lambda)^\bot.$$
Lemma~\ref{lem2.6} implies that $C^*(H\Lambda)$ has topological dimension zero, and it
follows that $\big(\TT^k \times \Prim(C^*(\Gamma))\big)/\Per(\Lambda)^\bot$ has a basis
of compact open sets. Using Lemma~\ref{lem2.4} we conclude that
$\TT^k/\Per(\Lambda)^\bot$ has a basis of compact open sets. But
$\TT^k/\Per(\Lambda)^\bot$ is the Pontryagin dual of a free abelian group, and hence
homeomorphic to $\TT^l$ where $l$ is the rank of $\Per(\Lambda)$. So
$\TT^k/\Per(\Lambda)^\bot$ is connected and not a singleton, a contradiction.
\end{proof}

Recall that a topological groupoid $\Gg$ is \emph{topologically principal} if the set
$$\{u\in \Gg^{(0)} : \Gg^{u}_{u}=\{u\}\}$$ of units with trivial isotropy is dense in
$\Gg^{(0)}$ \cite[Definition 1.2]{Ren2}, and \emph{essentially principal} if for every
closed invariant $X\subseteq \Gg^{(0)}$, the set $\{u\in X : \Gg^{u}_{u}=\{u\}\}$ is
dense in $X$ \cite[Definition~4.3]{Ren}. For each $k$-graph $\Lambda$ with no sources we
let $\Gg_{\Lambda}$  denote the \emph{infinite-path groupoid} of $\Lambda$ introduced in
\cite[Definition 2.7]{KumPas}.

\begin{cor}\label{thm.ip}
Let $\Lambda$ be a row-finite $k$-graph with no sources. Then the following are
equivalent:
\begin{itemize}
\item[(i)] The $k$-graph $\Lambda$ is strongly aperiodic.
\item[(ii)] For every hereditary saturated subset $H\subsetneq\Lambda^0$, the
    groupoid $\Gg_{\Lambda\setminus \Lambda H}$ is topologically principal.
\item[(iii)] The groupoid $\Gg_{\Lambda}$ is essentially principal.
\item[(iv)] The $C^*$-algebra $C^*( \Lambda)$ has topological dimension zero.
\item[(v)] Every ideal of $C^*(\Lambda)$ is gauge invariant.
\item[(vi)] The map $I \mapsto \{v \in \Lambda^0 : s_v \in I\}$ is a bijection
    between ideals of $C^*(\Lambda)$ and hereditary saturated subsets
    $H\subseteq\Lambda^0$.
\end{itemize}
\end{cor}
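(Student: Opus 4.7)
We have (i) $\Leftrightarrow$ (iv) from Theorem~\ref{thm2.3}. The plan is to complete the cycle of equivalences by showing (i) $\Leftrightarrow$ (ii) $\Leftrightarrow$ (iii) via standard groupoid arguments, and (i) $\Leftrightarrow$ (v) $\Leftrightarrow$ (vi) by combining the Cuntz--Krieger uniqueness theorem with the structural results of Section~\ref{sec:topdim} to exhibit non-gauge-invariant ideals in the non-aperiodic case.

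For (i) $\Leftrightarrow$ (ii), it is standard that a row-finite $k$-graph $\Gamma$ with no sources is aperiodic if and only if $\Gg_\Gamma$ is topologically principal (see, e.g., \cite[Proposition~4.5]{KumPas} or \cite{RobSim}), and applying this to each quotient $\Lambda \setminus \Lambda H$ yields the equivalence. For (ii) $\Leftrightarrow$ (iii), I will use the familiar bijection between hereditary saturated subsets of $\Lambda^0$ and closed invariant subsets of $\Lambda^\infty = \Gg_\Lambda^{(0)}$, sending $H$ to $(\Lambda \setminus \Lambda H)^\infty$, under which the restriction $\Gg_\Lambda\big|_{(\Lambda \setminus \Lambda H)^\infty}$ is identified with $\Gg_{\Lambda \setminus \Lambda H}$; essential principality of $\Gg_\Lambda$ thus amounts to topological principality of each such restriction.

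For (v) $\Leftrightarrow$ (vi), the map $I \mapsto H_I := \{v \in \Lambda^0 : s_v \in I\}$ restricted to gauge-invariant ideals is a bijection onto hereditary saturated subsets by \cite[Theorem~5.2]{RaeSimYee}, so (v) is equivalent to the map being a bijection on all ideals. For (i) $\Rightarrow$ (v): given an ideal $I$, let $H := H_I$; then $H$ is hereditary and saturated, $I_H \subseteq I$, and $C^*(\Lambda)/I_H \cong C^*(\Lambda \setminus \Lambda H)$. Since strong aperiodicity gives aperiodicity of $\Lambda \setminus \Lambda H$, the Cuntz--Krieger uniqueness theorem forces the induced surjection $C^*(\Lambda \setminus \Lambda H) \to C^*(\Lambda)/I$ to be injective (it is nonzero on every $s_v$ with $v \notin H$ by definition of $H$), so $I = I_H$ is gauge-invariant.

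The main obstacle is (v) $\Rightarrow$ (i), which I will attack by contrapositive. Assume $\Lambda$ is not strongly aperiodic. Lemma~\ref{lem2.9} furnishes a hereditary saturated $H' \subseteq \Lambda^0$ such that $\Lambda' := \Lambda \setminus \Lambda H'$ has maximal-tail vertex set and $\Per(\Lambda') \neq \{0\}$. Setting $H'' := H_{\Per}(\Lambda')$ and $\Gamma := (H''\Lambda')/\negthickspace\sim$, Lemma~\ref{lem2.8} produces an isomorphism $\pi : C^*(H''\Lambda') \to \Ind(\alpha)$ intertwining the gauge action $\gamma$ with $\lt$, and identifies $\Prim(C^*(H''\Lambda'))$ with $(\TT^k \times \Prim(C^*(\Gamma)))/\Per(\Lambda')^\bot$. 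Since $\Per(\Lambda')^\bot$ is a proper closed subgroup of $\TT^k$, the codomain $\TT^k/\Per(\Lambda')^\bot$ of the $\TT^k$-equivariant map $\varphi$ from Lemma~\ref{lem2.10} is a nontrivial space on which $\TT^k$ acts transitively, so $\varphi^{-1}(\{w\})$ for any single point $w$ is a proper closed subset of $\Prim(C^*(H''\Lambda'))$ that fails to be $\TT^k$-invariant, and the corresponding ideal of $C^*(H''\Lambda')$ is not gauge-invariant. Finally, the full-corner inclusion $C^*(H''\Lambda') \cong P_{H''} I_{H''} P_{H''} \subseteq I_{H''} \subseteq C^*(\Lambda')$, where $P_{H''}$ is a sum of gauge-fixed vertex projections, transports this non-gauge-invariant ideal to a non-gauge-invariant ideal of $C^*(\Lambda')$, which in turn pulls back through the gauge-equivariant quotient $C^*(\Lambda) \to C^*(\Lambda')$ to a non-gauge-invariant ideal of $C^*(\Lambda)$, contradicting (v).
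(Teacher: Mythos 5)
Your proposal is correct, and the parts (i)$\Leftrightarrow$(iv), (i)$\Leftrightarrow$(ii), (ii)$\Leftrightarrow$(iii) and (v)$\Leftrightarrow$(vi) run essentially as in the paper (the paper handles (ii)$\Leftrightarrow$(iii) by the same identification of closed invariant subsets of $\Gg_\Lambda^{(0)}$ with the sets $(\Lambda\setminus\Lambda H)^\infty$, citing the argument of \cite[Proposition~6.5]{KumPasRaeRen}). Where you genuinely diverge is the equivalence (i)$\Leftrightarrow$(v): the paper simply cites \cite[Proposition~3.6]{RobSim} for this, whereas you prove both implications from scratch. Your (i)$\Rightarrow$(v) is the standard Cuntz--Krieger-uniqueness argument and is fine. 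Your (v)$\Rightarrow$(i) is the interesting part: you recycle Lemmas~\ref{lem2.8}--\ref{lem2.10} to realise $C^*(H''\Lambda')$ as $\Ind(\alpha)$ with the gauge action carried to $\lt$, and then use the equivariant fibration $\varphi$ onto the nontrivial torus $\TT^k/\Per(\Lambda')^\bot$ to exhibit a closed, non-translation-invariant subset of the primitive ideal space --- hence a non-gauge-invariant ideal --- which you transport back through the gauge-equivariant Rieffel correspondence (legitimate since $P_{H''}$ is a sum of gauge-fixed projections, and $H''$ need only be hereditary for Lemma~\ref{lem2.6} to apply) and the gauge-equivariant quotient. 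This is a correct and pleasingly self-contained alternative: it makes the corollary independent of \cite{RobSim} at the cost of repeating the structural analysis of the periodic case, and it runs exactly parallel to the proof of Theorem~\ref{thm2.3}, with ``a compact open basis cannot survive a continuous open surjection onto a torus'' replaced by ``gauge-invariance of all ideals cannot survive an equivariant fibration over a torus with nontrivial translation action.''
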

\begin{proof}
See \cite[Proposition~4.5]{KumPas} for (i)$\Leftrightarrow$(ii). For
(ii)$\Leftrightarrow$(iii), one checks that the nonempty closed invariant subsets of
$\Gg_{\Lambda}^{(0)}$ are precisely the sets $(\Lambda\setminus \Lambda H)^\infty
\subseteq \Lambda^\infty$ associated to hereditary saturated sets $H\subsetneq\Lambda^0$.
This follows from the argument of \cite[Proposition~6.5]{KumPasRaeRen} \emph{mutatis
mutandis}.

Theorem~\ref{thm2.3} gives (i)$\Leftrightarrow$(iv). For (i)$\Leftrightarrow$(v) see
\cite[Proposition~3.6]{RobSim} and see \cite[Theorem~5.5]{Sim} for (v)$\Rightarrow$(vi).
For (vi)$\Rightarrow$(v) observe that the $s_v$ are fixed under the gauge action, so any
ideal generated by vertex projections is gauge invariant.
\end{proof}

\section{Real rank zero}\label{sec:rr0}
In this section we study when $k$-graph $C^*$-algebras have real-rank zero. We can give a
complete answer for $2$-graphs $\Lambda$ such that $C^*(\Lambda)$ is strongly purely
infinite, and we are able to say some things in greater generality. We start with a
necessary condition that follows easily from work of Brown--Pedersen (it could also be
deduced easily from work of Pasnicu \cite{Pas3}) and requires no additional hypotheses.
It also connects the question of real-rank zero to our work in the preceding section.

\begin{lemma}\label{lem:RR0->s.a.}	
Let $\Lambda$ be a row-finite $k$-graph with no sources. If the $C^*$-algebra $C^*(\Lambda)$
has real-rank zero then $\Lambda$ is strongly aperiodic.
\end{lemma}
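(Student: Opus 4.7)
The plan is to combine Theorem~\ref{thm2.3} with the fact (essentially due to Brown and Pedersen) that real-rank zero implies topological dimension zero. Theorem~\ref{thm2.3} tells us that strong aperiodicity of $\Lambda$ is equivalent to $C^*(\Lambda)$ having topological dimension zero, so it suffices to show the implication
\[
\text{real-rank zero} \;\Longrightarrow\; \text{topological dimension zero}
\]
for an arbitrary $C^*$-algebra.

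First I would recall the basic mechanism: in a $C^*$-algebra $A$ of real-rank zero, for any projection $p \in A$ the set $U_p := \{P \in \Prim(A) : p \notin P\}$ is a compact open subset of $\Prim(A)$ (it is open because the complement is the hull of the ideal generated by $p$, and compact because $p$ is a projection so $U_p = \hull(\{a \in A : pa = 0\})^c$ is quasi-compact). So one just needs enough projections to generate a basis for the Jacobson topology. This is exactly what~\cite{BroPed2} (see also the discussion around topological dimension zero there) provides: for a real-rank zero $C^*$-algebra, the sets $U_p$ as $p$ ranges over projections in $A$ form a basis of compact open sets for $\Prim(A)$. Hence $A$ has topological dimension zero.

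Applying this to $A = C^*(\Lambda)$, the hypothesis that $C^*(\Lambda)$ has real-rank zero therefore forces $C^*(\Lambda)$ to have topological dimension zero. By Theorem~\ref{thm2.3}, $\Lambda$ is strongly aperiodic, which is exactly the conclusion.

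I do not anticipate any genuine obstacle here; the lemma is essentially a one-line combination of Theorem~\ref{thm2.3} with a standard result of Brown--Pedersen. The only point that requires a little care is citing the precise statement from~\cite{BroPed2} that real-rank zero implies topological dimension zero (rather than re-deriving it), and perhaps remarking, as the authors do in the sentence preceding the lemma, that the same conclusion could alternatively be extracted from Pasnicu's~\cite{Pas3}.
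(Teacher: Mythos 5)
Your proposal is correct and follows the same route as the paper: real-rank zero implies topological dimension zero (a Brown--Pedersen fact), and then the implication (i)$\Rightarrow$(ii) of Theorem~\ref{thm2.3} gives strong aperiodicity. The only cosmetic difference is that the paper simply cites \cite[2.1(iv) and Proposition~2.7]{BroPed2} (via generalised real-rank zero) rather than re-deriving the basis of compact open sets $U_p$ from projections, but your sketch of that mechanism is sound.
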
		
\begin{proof}
Suppose that $C^*(\Lambda)$ has real-rank zero. Then it has generalised real-rank zero in
the sense of \cite[2.1(iv)]{BroPed2}, and so Proposition~2.7 of \cite{BroPed2} implies
that it also has topological dimension zero. Hence (i)$\implies$(ii) of
Theorem~\ref{thm2.3} implies that $\Lambda$ is strongly aperiodic.
\end{proof}

Our objective in the rest of the section is to strengthen this necessary condition and
obtain a necessary and sufficient condition for real-rank zero when $C^*(\Lambda)$
is purely infinite and $k= 2$.

For an $R$-module $M$ and an integer $a\geq 2$ let ${\bigwedge}^{a}M$ denote the $a$th
\emph{exterior power} of $M$, i.e. the $R$-module $M^{\otimes a}/J_a$ where $J_a$ is the
submodule of $M^{\otimes a}$ spanned by elements of the form $m_1 \otimes m_2 \otimes
\cdots \otimes m_a$ such that $m_i = m_j$ for some $i \neq j$. For any
$m_1,m_2,\dots,m_a\in M$, the coset of $m_1 \otimes m_2 \otimes \cdots \otimes m_a$ in
${\bigwedge}^{a}M$ is denoted $m_1 \wedge m_2 \wedge \cdots \wedge m_a$ and we call it an
elementary wedge product. If $\{\epsilon_1,\epsilon_2, \dots,\epsilon_N\}$ constitutes a
basis for $M$ (with $a\leq N$) then $\{\epsilon_{i_1} \wedge \epsilon_{i_2} \wedge \cdots
\wedge \epsilon_{i_a} : 1\leq i_1 <\cdots< i_a \leq N \}$ is a basis for
${\bigwedge}^{a}M$. We regard $R$ as an $R$-module and set $\bigwedge^0M = R$ and
$\bigwedge^1M=M$.

Let $\Lambda$ be a row finite $k$-graph with no sources. Following \cite{KumPasSim} we
introduce the chain complex $D_*^\Lambda$. Let $\ZZ \Lambda^0$ denote the set of finitely
supported functions from $\Lambda^0$ to $\ZZ$, regarded as an abelian group under
pointwise addition. For $v\in \Lambda^0$ define $\delta_v\in \ZZ \Lambda^0$ by
$\delta_v(u)=\delta_{u,v}$ (the Kronecker delta). For each $i=1,\dots,k$ let $M_i$ (or
$M_{i,\Lambda}$ for emphasis) be the \emph{vertex connectivity matrix} given by
$M_i(u,v)=|u\Lambda^{e_i}v|$. Regard $M_i$ as the group endomorphism of $\ZZ \Lambda^0$
given by $(M_if)(u)=\sum_{v\in \Lambda^0}M_i(u,v)f(v)$. Let $D^\Lambda_*$ be the chain
complex such that
$$D_a^\Lambda=\begin{cases}
	\bigwedge^a \ZZ^k \otimes \ZZ \Lambda^0 & \mbox{ if } 0\leq a\leq k, \\
	0 & \mbox{ if }a>k,
\end{cases}$$
with differentials $\partial_a : D^\Lambda_a \to D^\Lambda_{a-1}$ (or
$\partial_a^\Lambda$ for emphasis) defined for $1\leq a\leq k$ by
\begin{align*}
\partial_a(\epsilon_{i_1} \wedge \cdots \wedge \epsilon_{i_a} \otimes \delta_v)=\sum_{j = 1}^a (-1)^{j+1} \epsilon_{i_1}\wedge \cdots \wedge\widehat{\epsilon}_{i_j}\wedge\cdots\wedge \epsilon_{i_a} \otimes (1 - M_j^t)\delta_v,
\end{align*}
where the symbol `` $\widehat{\cdot}$ '' denotes deletion of an element, $v\in\Lambda^0$,
$\{\epsilon_1,\dots,\epsilon_k\}$ is the canonical basis for $\ZZ^k$ and $M_j^t$ is the
transpose of $M_j$.

\begin{eg}\label{eq4.1}
Let $\Lambda$ be a row finite $k$-graph with no sources, $H\subsetneq \Lambda^0$ a
hereditary saturated subset of $\Lambda^0$ and $\Gamma={H \Lambda}$. Recall that a
\emph{morphism of chain complexes} $f:C_* \to D_*$ is given by a family of morphisms
$f_n:C_n \to D_n$ that commute with the differentials. In this example we construct a
morphism of chain complexes $\qH : D^\Gamma_* \to D^\Lambda_*$.

For each $f\in \ZZ\Gamma^0$ let $\ext{f}$ be the extension of $f$ to $\Lambda^{0}$ by
zero. Since $H$ is hereditary, for $u\in H$ and $v\not\in H$ we have
$M_{j,\Lambda}(u,v)=0$. It follows that for each $v\in \Lambda^0$
\begin{align*}
(M_{j,\Lambda}^t(\ext{f}))(v)&=\sum_{u\in\Lambda^0}M_{j,\Lambda}(u,v)(\ext{f}(u))=\sum_{u\in \Gamma^0}M_{j,\Lambda}(u,v)(f(u))\\
&= 1_{\Gamma^0}(v) \sum_{u\in \Gamma^0}M_{j,\Lambda}(u,v)(f(u))
=(\ext{M_{j,\Gamma}^tf})(v).
\end{align*}
We conclude that $\ext{(1 - M_{j,\Gamma}^t)f}= (1 - M_{j,\Lambda}^t)(\ext{f})$ for each
$f\in \ZZ \Gamma^0$. For $0\leq a\leq k$ let $\qHa{a} : D^\Gamma_a \to D^\Lambda_a$ be
the morphism given by $\qHa{a}(d \otimes f)=d \otimes \ext{f}$. The above calculation
produces the commuting diagram
\[
\xymatrix{0 \ar[r] & D^\Gamma_k \ar[d]^{\qHa{k}} \ar[r]^-{\partial^\Gamma_k} & D^\Gamma_{k-1} \ar[d]^{\qHa{k-1}} \ar[r] & \cdots \ar[r] & D^\Gamma_1 \ar[d]^{\qHa{1}} \ar[r]^-{\partial^\Gamma_1} & \ar[d]^{\qHa{0}} D^\Gamma_0 \ar[r] & 0\\
0 \ar[r] & D^\Lambda_k \ar[r]^-{\partial^\Lambda_k} & D^\Lambda_{k-1} \ar[r] & \cdots \ar[r] & D^\Lambda_1 \ar[r]^-{\partial^\Lambda_1} & D^\Lambda_0 \ar[r] & 0}
\]
and hence yields a morphism of chain complexes $\qH : D^\Gamma_* \to D^\Lambda_*$.
\end{eg}
	Any morphism of chain complexes induces a morphism of the homology groups of the
chain complexes. In particular Example~\ref{eq4.1} yields the morphism $H_*(\qH) :
H_*(D^\Gamma_*) \to H_*(D^\Lambda_*)$. We can now state our main result.

\begin{thm}\label{thm4.2}
Let $\Lambda$ be a row-finite $2$-graph with no sources.
\begin{itemize}
\item[(1)] Suppose $C^*(\Lambda)$ is purely infinite. Then the following are equivalent:
\begin{itemize}
\item[(i)] $C^*(\Lambda)$ has real rank zero
\item[(ii)] $\Lambda$ is strongly aperiodic and $H_1(\qH)$ is injective for every
    saturated hereditary subset $H$ of $\Lambda^0$.
\end{itemize}
\item[(2)] The implication $\emph{(i)}\Rightarrow\emph{(ii)}$ holds even if $C^*(\Lambda)$ is not purely infinite.
\end{itemize}
\end{thm}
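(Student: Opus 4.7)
The plan is to reduce both directions to three ingredients: Pasnicu--R{\o}rdam's theorem \cite{PasRor} that a purely infinite $C^*$-algebra has real rank zero if and only if it has topological dimension zero and is $K_0$-liftable; Theorem~\ref{thm2.3}, which identifies topological dimension zero with strong aperiodicity; and an identification---to be established separately in this section using naturality of Kasparov's spectral sequence and Evans' calculation---that for $k=2$ the $K_1$-map $K_1(C^*(H\Lambda)) \to K_1(C^*(\Lambda))$ induced by inclusion agrees, under the Morita equivalence of Lemma~\ref{lem2.6}, with $H_1(\qH)$ from Example~\ref{eq4.1}.

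For part (2) and hence the implication $(\text{i})\Rightarrow(\text{ii})$ of part (1), I would assume $C^*(\Lambda)$ has real rank zero. Lemma~\ref{lem:RR0->s.a.} immediately yields strong aperiodicity. For a saturated hereditary $H\subseteq\Lambda^0$, real rank zero implies that projections in $C^*(\Lambda)/I_H$ lift to $C^*(\Lambda)$, so the exponential map $\partial : K_0(C^*(\Lambda)/I_H)\to K_1(I_H)$ in the six-term exact sequence of $0\to I_H\to C^*(\Lambda)\to C^*(\Lambda)/I_H\to 0$ vanishes. Exactness then gives injectivity of $K_1(I_H)\to K_1(C^*(\Lambda))$. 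Via Lemma~\ref{lem2.6} and the identification above, this is exactly the map $H_1(\qH)$, which is therefore injective.

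For the converse $(\text{ii})\Rightarrow(\text{i})$ of part (1), I would assume $\Lambda$ strongly aperiodic, $C^*(\Lambda)$ purely infinite, and $H_1(\qH)$ injective for every saturated hereditary $H$. Theorem~\ref{thm2.3} gives topological dimension zero, and by Pasnicu--R{\o}rdam it suffices to verify $K_0$-liftability, i.e.\ vanishing of $\partial : K_0(C^*(\Lambda)/I)\to K_1(I)$ for every ideal $I$ of $C^*(\Lambda)$. By Corollary~\ref{thm.ip} every ideal has the form $I_H$ for some saturated hereditary $H$, and the argument of the previous paragraph shows that vanishing of $\partial$ for $I_H$ is equivalent to injectivity of the inclusion-induced $K_1$-map, which in turn equals $H_1(\qH)$. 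The hypothesis then delivers $K_0$-liftability and hence real rank zero.

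The main obstacle is the third ingredient: identifying the $K_1$-map of $C^*(H\Lambda)\hookrightarrow C^*(\Lambda)$ with $H_1(\qH)$. This requires building a natural morphism between Evans' (Kasparov) spectral sequences for $H\Lambda$ and $\Lambda$ compatible with the inclusion, and then invoking Evans' isomorphism $K_1(C^*(\Lambda))\cong H_1(D^\Lambda_*)$, which is available precisely when $k=2$. This is why the theorem is restricted to $2$-graphs and why this identification is worth isolating as a separate technical step before the present proof is carried out.
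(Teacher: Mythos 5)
Your proposal is correct and follows essentially the same route as the paper: Pasnicu--R{\o}rdam's characterisation, Theorem~\ref{thm2.3} for topological dimension zero, reduction of $K_0$-liftability to ideals $I_H$ via Corollary~\ref{thm.ip} and the full-corner identification of Lemma~\ref{lem2.6}, and the spectral-sequence identification of the inclusion-induced $K_1$-map with $H_1(\qH)$ (which the paper isolates as Lemma~\ref{lem:liftability<->homology}, built on Lemmas \ref{lem4.3}~and~\ref{lem4.4}). Your phrasing of part~(2) via vanishing of the exponential map is just the six-term-sequence reformulation of the paper's projection-lifting argument for $K_0$-liftability.
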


We will prove Theorem~\ref{thm4.2} at the end of the section, after establishing some
preliminary results. First though, we present a reformulation of the condition that
$H_1(\qH)$ is injective in terms of the connectivity matrices of the $2$-graph $\Lambda$.
This result will prove useful when applying Theorem~\ref{thm4.2} in practice. To state
it, we need a little bit more notation. Suppose that $\Lambda$ is a $2$-graph and $H
\subseteq \Lambda^0$ is a hereditary set. It follows that, for each $i$,
$M_{i,\Lambda}(u,v) = 0$ whenever $u \in H$ and $v \not\in H$. This means that each
$M^t_{i,\Lambda}$ has a block-upper-triangular decomposition with respect to the
decomposition $\Lambda^0 = H \sqcup (\Lambda^0 \setminus H)$. We will use the following
notation for this decomposition:
\begin{equation}\label{eq:block decomp}
M^t_{i,\Lambda} =
    \left(
        \begin{array}{cc}
            M^t_{i, H} & M^t_{i, H, \Lambda^0 \setminus H} \\
            0 & M^t_{i, \Lambda^0 \setminus H}
        \end{array}
    \right).
\end{equation}

\begin{prop}\label{injective.equival}
Let $\Lambda$ be a row-finite $2$-graph with no sources, and let $H \subseteq \Lambda^0$
be a saturated hereditary set. With $H_1(\qH)$, $\partial_1$,  $\partial_2$ and $\Gamma$
as in Example~\ref{eq4.1}, the following are equivalent:
\begin{itemize}
    \item[(i)] The morphism $H_1(\qH)$ is injective.
    \item[(ii)] We have $\im \partial^\Lambda_2 \cap \qHa{1}(\ker
    \partial^\Gamma_1) \subseteq \qHa{1}(\im \partial^\Gamma_2 )$.
    \item[(iii)] We have
\[
    \binom{M^t_{1, H, \Lambda^0 \setminus H}}{M^t_{2, H, \Lambda^0 \setminus H}}\big(\ker (M^t_{1, \Lambda^0 \setminus H}-1) \cap \ker (M^t_{2, \Lambda^0
        \setminus H}-1)\big) \subseteq \binom{M^t_{1, H}-1}{M^t_{2, H}-1} \ZZ H.
    \]
\end{itemize}
\end{prop}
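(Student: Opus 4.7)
The plan is to prove (i)$\Leftrightarrow$(ii) by unpacking the definition of injectivity of the map on $H_1$, and then to prove (ii)$\Leftrightarrow$(iii) by writing the relevant subgroups in the block form induced by~\eqref{eq:block decomp}. For (i)$\Leftrightarrow$(ii), I would first note that $\ext{\cdot}$ is extension by zero, hence injective, and so each $\qHa{a}$ is injective. Since $H_1(\qH)$ sends the class of $x \in \ker \partial^\Gamma_1$ to the class of $\qHa{1}(x)$, it is injective exactly when $x \in \ker \partial^\Gamma_1$ and $\qHa{1}(x) \in \im \partial^\Lambda_2$ together force $x \in \im \partial^\Gamma_2$. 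Injectivity of $\qHa{1}$ lets us replace the conclusion by $\qHa{1}(x) \in \qHa{1}(\im \partial^\Gamma_2)$, at which point the implication is precisely the containment in~(ii).

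For (ii)$\Leftrightarrow$(iii), I would use the explicit formula $\partial^\Lambda_2(\epsilon_1 \wedge \epsilon_2 \otimes c) = \epsilon_1 \otimes (M^t_{2,\Lambda}-1)c - \epsilon_2 \otimes (M^t_{1,\Lambda}-1)c$ and decompose $c = c_H + c'$ with $c_H \in \ZZ H$ and $c' \in \ZZ(\Lambda^0 \setminus H)$. By~\eqref{eq:block decomp}, the $\ZZ(\Lambda^0 \setminus H)$-component of $(M^t_{i,\Lambda}-1)c$ is $(M^t_{i,\Lambda^0 \setminus H}-1)c'$, so $\partial^\Lambda_2(\epsilon_1 \wedge \epsilon_2 \otimes c)$ lies in $\qHa{1}(D^\Gamma_1)$ if and only if $c' \in \ker(M^t_{1,\Lambda^0 \setminus H}-1) \cap \ker(M^t_{2,\Lambda^0 \setminus H}-1)$. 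Since $\qHa{0}$ is injective, a chain-map argument gives $\qHa{1}(D^\Gamma_1) \cap \ker \partial^\Lambda_1 = \qHa{1}(\ker \partial^\Gamma_1)$, and so such an element lies automatically on the left-hand side of~(ii). Its $\ZZ H$-components are, up to sign, $(M^t_{i,H}-1)c_H + M^t_{i,H,\Lambda^0 \setminus H}c'$ for $i = 1,2$, and membership in $\qHa{1}(\im \partial^\Gamma_2)$ amounts to the existence of $c'_H \in \ZZ H$ with $(M^t_{i,H}-1)c'_H = (M^t_{i,H}-1)c_H + M^t_{i,H,\Lambda^0 \setminus H}c'$ for $i=1,2$. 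Setting $d := c'_H - c_H$ turns this into $\binom{M^t_{1, H, \Lambda^0 \setminus H}}{M^t_{2, H, \Lambda^0 \setminus H}}c' = \binom{M^t_{1,H}-1}{M^t_{2,H}-1}d$, which, imposed for every allowed $c'$, is exactly~(iii).

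The main obstacle is only the careful bookkeeping of the block decomposition and the signs in the formula for $\partial^\Lambda_2$; no deep input is needed beyond the fact that heredity of $H$ yields the block-upper-triangular form~\eqref{eq:block decomp}. The key conceptual observation that makes the translation clean is that the free parameter $c_H$ coming from the image of $\partial^\Gamma_2$ is absorbed into the shift $d = c'_H - c_H$, so the existence condition depends only on $c'$ and reduces to the containment in~(iii).
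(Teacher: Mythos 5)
Your proposal is correct and follows essentially the same route as the paper: the equivalence (i)$\Leftrightarrow$(ii) by unwinding the definition of $H_1(\qH)$ together with injectivity of $\qHa{1}$, and (ii)$\Leftrightarrow$(iii) by writing everything in the block form of \eqref{eq:block decomp} and absorbing the free $\ZZ H$-parameter into a shift. The only cosmetic difference is that where you identify $\im \partial^\Lambda_2 \cap \qHa{1}(\ker\partial^\Gamma_1)$ via the chain-complex identity $\partial^\Lambda_1\partial^\Lambda_2=0$ and injectivity of $\qHa{0}$, the paper verifies the same membership by a direct computation using that $M^t_{1,\Lambda}$ and $M^t_{2,\Lambda}$ commute --- which is the same underlying fact.
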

\begin{proof}
Before proving (i)$\Leftrightarrow$(ii), we do some initial work to reformulate
condition~(i). Since $\qHa{1}\circ
\partial^\Gamma_2=\partial^\Lambda_2\circ \qHa{2}$, we have $\qHa{1}(\im
\partial^\Lambda_2)\subseteq \im \partial^\Lambda_2$. Hence there is a well-defined
homomorphism $H_1(\qH) : \ker \partial^\Gamma_1 / \im \partial^\Gamma_2 \to \ker
\partial^\Lambda_1 / \im \partial^\Lambda_2$ such that
\[
H_1(\qH)\big(a+\im \partial^\Gamma_2\big) = \qHa{1}(a)+\im \partial^\Lambda_2
    \quad\text{ for all $a \in \ker \partial^\Gamma_1$.}
\]
By linearity, it follows that $H_1(\qH)$ is injective if and only if whenever $a\in \ker
\partial^\Gamma_1$ satisfies $\qHa{1}(a)\in \im \partial^\Lambda_2$, we have $a\in\im \partial^\Gamma_2$.

Now, to prove (i)$\Rightarrow$(ii), suppose that $H_1(\qH)$ is injective, and fix $a\in
\ker \partial^\Gamma_1$ such that $\qHa{1}(a) \in \im\partial^\Lambda_2$. Then the
preceding paragraph shows that $a\in\im \partial^\Gamma_2$, giving $\qHa{1}(a)\in
\qHa{1}(\im \partial^\Gamma_2)$.

To verify (ii)$\Rightarrow$(i), we suppose that~(ii) holds, and fix $a\in \ker
\partial^\Gamma_1$ with $\qHa{1}(a)\in \im \partial^\Lambda_2$. By the first paragraph of
the proof, it suffices to show that $a \in \im \partial^\Gamma_2$. We have $\qHa{1}(a)\in
\im \partial^\Lambda_2 \cap \qHa{1}(\ker \partial^\Gamma_1)$. By~(ii), $\qHa{1}(a)\in
\qHa{1}(\im \partial^\Gamma_2 )$, so $\qHa{1}(a)=\qHa{1}(c)$ for some $c\in\im
\partial^\Gamma_2$. Since $\qHa{1}(d \otimes f)=d \otimes \ext{f}$, where $\ext{f}$ is
the extension of $f\in \ZZ\Gamma^{0}$ to $\Lambda^{0}$ by zero, $\qHa{1}$ is injective,
so $a=c\in\im
\partial^\Gamma_2$.

We now prove that (ii)$\Leftrightarrow$(iii). By \cite{Eva, KumPasSim}, the complex
$D_a^\Lambda=\bigwedge^a \ZZ^k \otimes \ZZ \Lambda^0$ may be written as follows:
\[
\xymatrix{0 \ar[r] & \ZZ \Lambda^0 \ar[r]^-{\partial^\Lambda_2} & {\ZZ \Lambda^0\oplus \ZZ \Lambda^0} \ar[r]^-{\partial^\Lambda_1} & \ZZ \Lambda^0 \ar[r] & 0}
\]
where $\partial^\Lambda_1 = (1 - M_{1,\Lambda}^t, 1 - M_{2,\Lambda}^t)$ and
$\partial^\Lambda_2 =\begin{pmatrix} M_{2,\Lambda}^t - 1 \\
1 - M_{1,\Lambda}^t\end{pmatrix}$. The same applies to the $2$-graph $\Gamma=H\Lambda$.
To ease notation in the calculations that follow, we write $T := \Lambda^0 \setminus H$,
and we write $A := M^t_{1,\Lambda}$ and $B := M^t_{2,\Lambda}$, so that
\[
1 - M^t_{1,\Lambda} =
    \left(
        \begin{array}{cc}
            1 - A_H & -A_{H, T} \\
            0 & 1 - A_T
        \end{array}
    \right)\qquad\text{ and }\qquad
1 - M^t_{2,\Lambda} =
    \left(
        \begin{array}{cc}
            1 - B_H & -B_{H, T} \\
            0 & 1 - B_T
        \end{array}
    \right) .
\]
Define
\[
K_1 :=\im \partial_2^\Lambda,  \qquad
    K_2 :=\qHa{1}(\ker \partial_1^\Gamma),\qquad\text{and}\qquad
    L_1 :=\qHa{1}(\im \partial_2^\Gamma),
\]
so that~(ii) is satisfied if and only if $K_1\cap K_2 \subseteq L_1$. We have
\begin{align*}
K_1 &=
\bigg\{\bigg(\Big(\begin{matrix}
                        B_H - 1 & B_{H, T} \\
                        {0} & B_T - 1\end{matrix}\Big)
                        \Big(\begin{matrix}a\\b\end{matrix}\Big) ,
             \Big(\begin{matrix}
                        1 - A_H & -A_{H, T} \\
                        {0} & 1 - A_T\end{matrix}\Big)
                        \Big(\begin{matrix}a\\b\end{matrix}\Big)\bigg)
            : \Big(\begin{matrix}a\\b\end{matrix}\Big) \in \ZZ\Lambda^0\bigg\},\\
K_2 &= \qHa{1}(\{(u, v) : u,v\in \ZZ{H} \textrm{ and } A_H u + B_H v = u+v\})\\
    &=\Big\{\Big(\minivector{u}{0}, \minivector{v}{0}\Big) : u,v \in \ZZ{H} \textrm{ and } A_H u + B_H v = u+v\Big\},\text{ and}\\
L_1 &= \qHa{1}\Big(\big\{\big((B_H-1) a, (1-A_H) a\big) : a\in \ZZ{H}\big\}\Big)\\
    &=\Big\{\Big(\minivector{(B_H-1) a}{0}, \minivector{(1-A_H) a}{0}\Big) : {a}\in \ZZ{H}\Big\}.
\end{align*}
Suppose that $\Big(\minivector{u}{0}, \minivector{v}{0}\Big) \in K_1 \cap K_2$. The
description of $K_1$ shows that $\Big(\minivector{u}{0}, \minivector{v}{0}\Big)$ has the
form $\partial_2^\Lambda\minivector{a}{b}$ where $b \in \ker(1-B_T) \cap \ker(1-A_T)$.
Conversely, if $b \in \ker(1-B_T) \cap \ker(1-A_T)$ and $a \in \ZZ H$, then we have
\[
\partial_2^\Lambda\minivector{a}{b}
    = \Big(\minivector{(B_H-1) a + B_{H, T} b}{0},
            \minivector{(1-A_H) a - A_{H, T} b}{0}\Big).
\]
For $a \in \ZZ \Lambda^0$, we write $a_H$ for the component $(a_v)_{v \in H}$ of $a$
belonging to $\ZZ H$. Using that $M^t_{1,\Lambda}$ and $M^t_{2,\Lambda}$ commute, we have
\begin{align*}
A_H\big((B_H-1) a + B_{H, T} b\big) +{}& B_H\big((1-A_H) a - A_{H, T} b\big) \\
    &= \Big(M^t_{1,\Lambda}(M^t_{2,\Lambda} - 1)\minivector{a}{b}\Big)_H
        + \Big(M^t_{2,\Lambda}(1 - M^t_{1,\Lambda})\minivector{a}{b}\Big)_H \\
    &= \Big((M^t_{2,\Lambda} - 1)\minivector{a}{b}\Big)_H
        + \Big((1 - M^t_{1,\Lambda})\minivector{a}{b}\Big)_H \\
    &= \big((B_H - 1)a + B_{H, T} b\big)
        + \big((1 - A_H)a - A_{H, T} b\big).
\end{align*}
Thus $\partial_2^\Lambda\minivector{a}{b}$ belongs to $K_1 \cap K_2$. That is, we have
showed that
\[
K_1 \cap K_2
    =\{\partial_2^\Lambda\minivector{a}{b} : a \in \ZZ H\text{ and }
        b \in \ker(1 - B_T) \cap \ker(1 - A_T)\}.
\]
Let
\[
L_2:= \{\left(\minivector{B_{H, T}b}{0}, \minivector{-A_{H, T}b}{0}\right) : {b}\in
\ZZ T\textrm{ and } b \in \ker(1 - B_T) \cap \ker(1 - A_T)\}.
\]
We have shown that $K_1\cap K_2 = L_1+L_2$, and hence~(ii) is satisfied if and only if
$L_2\subseteq L_1$. As the pairs of vectors in $L_1$ and $L_2$ have zeros at the second
row, it follows that~(ii) holds if and only if
\begin{align*}
\{(B_{H, T}b, -A_{H, T}b) : b \in \ZZ T & \textrm{ and } b \in \ker(1 - B_T) \cap \ker(1 - A_T)\} \\
    &\subseteq \big\{\big((B_H-1)a, (1-A_H)a\big) : a\in \ZZ{H}\big\}.
\end{align*}
Multiplying by $-1$ in the second coordinate and recalling the definitions of the
matrices $A$ and $B$ gives (ii)$\Leftrightarrow$(iii).
\end{proof}

We now embark on the preliminary results needed for the proof of Theorem~\ref{thm4.2}.
Let $\Lambda$ be a row finite $k$-graph with no sources. As usual, given an action
$\alpha$ of a locally compact group $G$ on a $C^*$-algebra $A$, we write $i_G : C^*(G)
\to \mathcal{M}(A \times_\alpha G)$ and $i_A : A \to \mathcal{M}(A \times_\alpha G)$ for
the canonical inclusions into the multiplier algebra of the crossed product.

We will abbreviate the crossed product $C^*(\Lambda)\times_\gamma \TT^k$ of
$C^*(\Lambda)$ by its gauge action by $B^\Lambda$. For each $n\in \ZZ^k$ and $f\in
C_c(\TT^k,C^*(\Lambda))$ define $\alpha^\Lambda_{n}(f)(z):=z^{-n}f(z)$. Then
$\alpha^\Lambda_{n}$ extends to an automorphism of $B^\Lambda$ and produces a dynamical
system $(B^\Lambda,\ZZ^k,\alpha^\Lambda)$. The action $\alpha^\Lambda$ is called the
\emph{dual action}, see \cite[p.~190]{Wil}.

Takai duality implies that $C^*(\Lambda)$ is isomorphic to a full corner of $B^\Lambda
\times_{\alpha^{\Lambda}} \ZZ^k$. Specifically, there is an inclusion map $\theta_\Lambda
: C^*(\Lambda) \to B^\Lambda \times_{\alpha^{\Lambda}} \ZZ^k$, which is given by
$\theta_\Lambda(s_\lambda) :=
i_{B^\Lambda}(i_{\TT}(1)i_{C^*(\Lambda)}(s_\lambda))i_{\ZZ^k}(d(\lambda))^*$, and which
induces the identity map in $K$-theory\footnote{This is discussed in detail in terms of
skew-product graphs in \cite[Lemma~5.2]{KumPasSim2}; to translate back to the language of
crossed-products, recall that there is an isomorphism $C^*(\Lambda) \times_\gamma \TT^k
\cong C^*(\Lambda \times_d \ZZ^k)$ that carries each $i_{C^*(\Lambda)}(s_\lambda)
i_{\TT^k}(z^n)$ to $s_{(\lambda, n)}$ \cite[Corollary~5.3]{KumPas}.}.

\begin{lemma}\label{lem4.3}
Let $\Lambda$ be a row finite $k$-graph with no sources, $H\subsetneq \Lambda^0$ a
hereditary saturated subset of $\Lambda^0$ and $\Gamma={H \Lambda}$. There is a
homomorphism $\varphi : C^*(\Gamma)\to C^*(\Lambda)$ satisfying
\begin{align}\label{eqn4.2}
\varphi(s_\lambda)=s_\lambda\quad\text{ for each }\quad \lambda\in \Gamma.
\end{align}
This $\varphi$ extends to a homomorphism $\hat{\varphi} : B^\Gamma \times_{\alpha^\Gamma}
\ZZ^k \to B^\Lambda \times_{\alpha^\Lambda} \ZZ^k$ such that the following diagram
commutes:
\begin{equation}\label{eqn4.3}
\xymatrix{C^*(\Gamma) \ar[d]^{\varphi}\ar[r]^-{\theta_\Gamma} & \ar[d]^{\hat\varphi} B^\Gamma \times_{\alpha^\Gamma} \ZZ^k\\
C^*(\Lambda) \ar[r]^-{\theta_\Lambda} &  B^\Lambda \times_{\alpha^\Lambda} \ZZ^k.}
\end{equation}
\end{lemma}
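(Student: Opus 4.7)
The plan is to construct $\varphi$ via the universal property, and then to push it through two successive crossed-product constructions by verifying equivariance at each stage, before finally checking commutativity of the diagram at the level of generators.

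First I would show that the elements $\{s_\lambda : \lambda \in \Gamma\}$ of $C^*(\Lambda)$ form a Cuntz--Krieger $\Gamma$-family. Conditions (CK1)--(CK3) are immediate because $\{s_\lambda : \lambda \in \Lambda\}$ is itself a Cuntz--Krieger $\Lambda$-family and $\Gamma = H\Lambda \subseteq \Lambda$. For (CK4), fix $v \in H$ and $n \in \NN^k$; since $H$ is hereditary we have $s(\lambda) \in H$ for every $\lambda \in v\Lambda^n$, which gives $v\Lambda^n = v\Gamma^n$, so $s_v = \sum_{\lambda \in v\Lambda^n} s_\lambda s_\lambda^* = \sum_{\lambda \in v\Gamma^n} s_\lambda s_\lambda^*$. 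The universal property of $C^*(\Gamma)$ then yields the desired homomorphism $\varphi : C^*(\Gamma)\to C^*(\Lambda)$ satisfying~\eqref{eqn4.2}.

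Next I would verify that $\varphi$ intertwines the gauge actions $\gamma^\Gamma$ and $\gamma^\Lambda$: on each generator we have
\[
\varphi(\gamma^\Gamma_z(s_\lambda)) = z^{d(\lambda)}\varphi(s_\lambda) = z^{d(\lambda)} s_\lambda = \gamma^\Lambda_z(\varphi(s_\lambda)),
\]
and an $\varepsilon/3$-argument extends this to all of $C^*(\Gamma)$. By functoriality of the (full) crossed product, $\varphi$ descends to a $\ast$-homomorphism $\tilde\varphi := \varphi\rtimes\TT^k : B^\Gamma \to B^\Lambda$ which acts on $C_c(\TT^k,C^*(\Gamma))$ by $\tilde\varphi(f)(z) = \varphi(f(z))$. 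From this pointwise formula and the definition $\alpha^\bullet_n(f)(z) = z^{-n}f(z)$, we see immediately that $\tilde\varphi \circ \alpha^\Gamma_n = \alpha^\Lambda_n \circ \tilde\varphi$ for every $n\in\ZZ^k$. A second application of functoriality of the crossed product produces the homomorphism $\hat\varphi := \tilde\varphi \rtimes \ZZ^k : B^\Gamma\times_{\alpha^\Gamma}\ZZ^k \to B^\Lambda\times_{\alpha^\Lambda}\ZZ^k$, which at the level of multiplier algebras satisfies $\hat\varphi \circ i_{B^\Gamma} = i_{B^\Lambda}\circ\tilde\varphi$ and $\hat\varphi \circ i_{\ZZ^k} = i_{\ZZ^k}$.

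Finally, to verify that the diagram~\eqref{eqn4.3} commutes, it suffices to evaluate both composites on the generators $s_\lambda$ of $C^*(\Gamma)$, $\lambda \in \Gamma$. Tracing through the definition of $\theta_\Gamma$ and using the intertwining identities established in the previous paragraph together with the fact that $\tilde\varphi(i_\TT(1)i_{C^*(\Gamma)}(s_\lambda)) = i_\TT(1) i_{C^*(\Lambda)}(s_\lambda)$, we obtain
\[
\hat\varphi(\theta_\Gamma(s_\lambda)) = i_{B^\Lambda}\bigl(i_\TT(1)i_{C^*(\Lambda)}(s_\lambda)\bigr) i_{\ZZ^k}(d(\lambda))^* = \theta_\Lambda(s_\lambda) = \theta_\Lambda(\varphi(s_\lambda)).
\]
The main obstacle here is purely bookkeeping: one must unpack the iterated covariant representations coming from Takai duality and keep track of the multiplier-algebra maps carefully, but no nontrivial analysis is required beyond the naturality of the full crossed product.
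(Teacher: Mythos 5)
Your proposal is correct and follows essentially the same route as the paper: gauge-equivariance of $\varphi$ yields $\varphi\times\id : B^\Gamma\to B^\Lambda$, equivariance for the dual actions yields $\hat\varphi$, and commutativity of the diagram is checked on the generators using the explicit formula for $\theta_\Lambda$. The only (harmless) difference is that you build $\varphi$ directly from the universal property by verifying the Cuntz--Krieger relations for the restricted family, whereas the paper obtains it from Lemma~\ref{lem2.6} via the identification of $C^*(H\Lambda)$ with a corner of the ideal $I_H$.
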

\begin{proof}
Lemma~\ref{lem2.6} gives a homomorphism $\varphi : C^*(\Gamma)\to C^*(\Lambda)$
satisfying \eqref{eqn4.2}. This $\varphi$ is equivariant for the gauge actions on
$C^*(\Gamma)$ and $C^*(\Lambda)$, so it induces a homomorphism $\varphi\times \id :
B^\Gamma\to B^\Lambda$ mapping $C_c(\TT^k,C^*(\Gamma))$ to $C_c(\TT^k,C^*(\Lambda))$ via
$\varphi\times \id(f)(z)=\varphi(f(z))$, see \cite[Corollary~2.48]{Wil}.

By construction, we have $(\varphi \times\id)\circ i_{C^*(\Gamma)}(s_\lambda) =
i_{C^*(\Lambda)}\circ \varphi(s_\lambda)$ for every $\lambda \in \Gamma$. The commutativity
of the diagram
\[
\label{diagram}
\xymatrix{C_c(\TT^k, C^*(\Gamma)) \ar[r]^-{\varphi\times \id} \ar[d]^{\alpha_n^\Gamma} & \ar[d]^{\alpha_n^\Lambda} C_c(\TT^k, C^*(\Lambda))\\
C_c(\TT^k, C^*(\Gamma)) \ar[r]^-{\varphi\times \id} & C_c(\TT^k, C^*(\Lambda)),}
\]
shows that $\varphi\times \id$ is equivariant for the dual actions of $\ZZ^k$ on
$B^\Gamma$ and $B^\Lambda$. Hence $\varphi\times \id$ induces a homomorphism
$\hat{\varphi} : B^\Gamma \times_{\alpha^\Gamma} \ZZ^k \to B^\Lambda
\times_{\alpha^\Lambda} \ZZ^k$. Using the description of $\theta_\Lambda$ and
$\theta_\Gamma$ above, we see that the diagram~\eqref{eqn4.3} commutes.
\end{proof}

Following \cite{KumPasSim} a \emph{homology spectral sequence} $(E^r,d^r,\alpha^r)$
or simply $(E^r,d^r)$ consists of the following data:
\begin{itemize}
\item[(i)] A family of $\ZZ$-modules $E^r_{a,b}$ defined for all integers $a,b$ and
    $r>0$.
\item[(ii)] Maps $d^r_{a,b} : E^r_{a,b}\to E^r_{a-r,b+r-1}$  that are differentials,
    i.e, $d^r_{a-r,b+r-1}\circ d^r_{a,b}=0$
\item[(iii)] Maps $\alpha^r_{a,b} : E^{r+1}_{a,b}\to H(E^r_{a,b})$ that are
    isomorphisms, where $H(E^{r}_{a,b})$ denotes the module
    $\ker(d^r_{a,b})/\im(d^r_{a+r,b-r+1})$.
\end{itemize}
A \emph{morphism of spectral sequences} $f:(E^r,d^r)\to ({\Etilde}^r,{\dtilde}^r)$ is a
family of morphisms $f^r:E^r\to{\Etilde}^r$ such that (1) the morphisms $f^r$ commute
with the differentials, and (2) the induced homomorphisms $H(f^r_{a,b}) : H(E^r_{a,b})\to
H({\Etilde}^r_{a,b})$ make the following diagram commute:
\[
\xymatrix{E^{r+1}_{a,b} \ar[d]^{f^{r+1}_{a,b}}\ar[r]^-{\alpha^r_{a,b}} & \ar[d]^{H(f^r_{a,b})} H(E^r_{a,b})\\
{\Etilde}^{r+1}_{a,b} \ar[r]^-{{\alphatilde}^r_{a,b}} &  H({\Etilde}^r_{a,b}).}
\]

Recall that the spectral sequence $E^r_{a,b}$ is \emph{bounded} if for each $r$ and $n$
there are just finitely many nonzero terms $E^r_{a,b}$ such that $a+b = n$ (note that
this holds for all $r$ if it holds for $r = 1$). We then eventually have $E^{r+1}_{a,b}
\cong E^r_{a,b}$ for all $a,b$, and we write $E^\infty_{a,b}$ for this limiting term. We
say that the spectral sequence $E^r_{a,b}$ \emph{converges} to a sequence $\mathcal{K}_*
= \{\Kk_n : n \in \ZZ\}$ of modules if each $\Kk_n$ admits a finite filtration $0 =
F_s(\Kk_n) \subseteq F_{s+1}(\Kk_n) \subseteq \cdots \subseteq F_t(\Kk_n) = \Kk_n$ such
that $E^\infty_{a,b} \cong F_a(\Kk_{a+b})/F_{a-1}(\Kk_{a+b})$ for $s < a \le t$.

The following result essentially follows from the argument of \cite[Theorem~3.15]{Eva}
(though we are following the notation of \cite[Proposition~5.4 and
Theorem~5.5]{KumPasSim}), but we need a more detailed statement.
\begin{lemma}\label{lem4.4}
Let $\Lambda$, $\Gamma$ and $\hat{\varphi}$ be as in Lemma~\ref{lem4.3}. There exist
spectral sequences $(E^r,d^r)$ and $({\Etilde}^r,{\dtilde}^r)$ and a morphism of spectral
sequences $f : (E^r,d^r)\to ({\Etilde}^r,{\dtilde}^r)$ such that
\begin{itemize}
\item[(a)] $(E^r,d^r)$ converges to $K_*(B^\Gamma \times_{\alpha^\Gamma} \ZZ^k)$ and
	$({\Etilde}^r,{\dtilde}^r)$ converges to
    $K_*(B^\Lambda \times_{\alpha^\Lambda} \ZZ^k)$,
\item[(b)] $E^1_{a,b}=D_a^\Gamma$ and ${\Etilde}^1_{a,b}=D_a^\Lambda$ for $0 \leq a
    \leq k$ and $b$ even,
\item[(c)] $d^1_{a,b}=\partial_a^\Gamma$ and ${\dtilde}^1_{a,b}=\partial_a^\Lambda$
    for $1 \leq a \leq k$ and $b$ even,
\item[(d)] the isomorphisms $E^\infty_{a,b} \cong K_*(B^\Gamma \times_{\alpha^\Gamma}
    \ZZ^k)$ and $\Etilde^\infty_{a,b} \cong K_*(B^\Lambda \times_{\alpha^\Lambda}
    \ZZ^k)$ of part~(a) intertwine the morphisms $f^\infty_{a,b} : E^\infty_{a,b} \to
    \Etilde^\infty_{a,b}$ and the homomorphisms $K_*(\hat\varphi) : K_*(B^\Gamma
    \times_{\alpha^\Gamma} \ZZ^k)\to K_*(B^\Lambda \times_{\alpha^\Lambda} \ZZ^k)$,
    and
\item[(e)] $f^1_{a,b} : D_a^\Gamma \to D_a^\Lambda$ coincides with the map $\qHa{a}$
    of Example~\ref{eq4.1}.
\end{itemize}	
\end{lemma}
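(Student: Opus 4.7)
The plan is to apply Kasparov's spectral sequence for $\ZZ^k$-crossed products (the same tool used in \cite[Theorem~3.15]{Eva}) to both of the dynamical systems $(B^\Gamma,\ZZ^k,\alpha^\Gamma)$ and $(B^\Lambda,\ZZ^k,\alpha^\Lambda)$, and then invoke naturality with respect to the equivariant homomorphism $\varphi \times \id : B^\Gamma \to B^\Lambda$ produced in Lemma~\ref{lem4.3}. The crossed products $B^\Gamma$ and $B^\Lambda$ are AF by~\cite{KumPas}, so only the $K_0$-layers of the spectral sequences are nonzero, and this is where the bulk of the bookkeeping happens.

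First I would apply Kasparov's spectral sequence to each dynamical system separately. Following \cite[Proposition~5.4]{KumPasSim}, this produces spectral sequences $(E^r,d^r)$ and $(\Etilde^r,\dtilde^r)$ whose $E^2$-pages are the group-homology groups $H_a(\ZZ^k,K_b(B^{(\cdot)}))$ and that converge to $K_{a+b}\big(B^{(\cdot)} \times_{\alpha^{(\cdot)}} \ZZ^k\big)$; this establishes~(a). Because Kasparov's construction is functorial in equivariant morphisms, $\varphi \times \id$ induces a morphism $f : (E^r,d^r) \to (\Etilde^r,\dtilde^r)$ whose limit intertwines $K_*(\hat\varphi)$, giving~(d).

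Next I would follow the argument of \cite[Proposition~5.4 and Theorem~5.5]{KumPasSim} to unpack the $E^1$-page. Writing $\ZZ$ as the trivial $\ZZ[\ZZ^k]$-module and using the Koszul resolution of $\ZZ$ over $\ZZ[\ZZ^k]$ yields $E^1_{a,b} = \bigwedge^a \ZZ^k \otimes K_b(B^{(\cdot)})$. Evans' identification $K_0(B^\Lambda) \cong \ZZ\Lambda^0$, sending the class of each vertex projection $s_v$ (in the gauge fixed-point algebra) to $\delta_v$, and the fact that $K_1(B^\Lambda) = 0$, then give $E^1_{a,b} = D^\Lambda_a$ for even $b$ (and zero for odd $b$); the same holds for $\Gamma$. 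The $d^1$-differential is the alternating sum coming from the standard generators of $\ZZ^k$, which act on $K_0(B^\Lambda) \cong \ZZ\Lambda^0$ via the transposed connectivity matrices $M_j^t$, so $d^1_{a,b} = \partial^\Lambda_a$, proving~(b)~and~(c).

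Finally, for~(e), I would trace $f^1_{a,b}$ through these identifications. By construction, $\varphi$ sends each vertex projection $s_v$ with $v \in H = \Gamma^0$ to the corresponding vertex projection $s_v$ in $C^*(\Lambda)$; under the isomorphisms $K_0(B^\Gamma) \cong \ZZ\Gamma^0$ and $K_0(B^\Lambda) \cong \ZZ\Lambda^0$ this is the extension-by-zero map $\delta_v \mapsto \ext{\delta_v}$. Since $\varphi \times \id$ is $\ZZ^k$-equivariant, the induced map on $E^1$ is $\id_{\bigwedge^a \ZZ^k} \otimes \qHa{0}$, which is precisely $\qHa{a}$. The main obstacle is the verification at the heart of step~(e): one must check that Evans' identification $K_0(B^\Lambda) \cong \ZZ\Lambda^0$ in~\cite{Eva, KumPasSim} is natural with respect to inclusions of full corners of the form given by Lemma~\ref{lem2.6} and Lemma~\ref{lem4.3}, so that the morphism of spectral sequences genuinely restricts on $E^1$ to the combinatorial map $\qHa{a}$ from Example~\ref{eq4.1}. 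Once this naturality is pinned down, everything else is routine diagram-chasing.
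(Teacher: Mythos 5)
Your overall architecture is the same as the paper's: apply Kasparov's spectral sequence to both dynamical systems, invoke naturality with respect to the equivariant map $\varphi\times\id$ of Lemma~\ref{lem4.3} to get the morphism $f$ and property~(d), unpack the $E^1$-page via the Koszul resolution, and trace $f^1$ through the identifications. However, the step on which your proofs of (b), (c) and (e) rest --- ``Evans' identification $K_0(B^\Lambda)\cong\ZZ\Lambda^0$'' --- is false in general. The crossed product $B^\Lambda=C^*(\Lambda)\times_\gamma\TT^k$ is AF with $K_0(B^\Lambda)\cong\varinjlim\big(\ZZ\Lambda^0,\{M_i^t\}\big)$; for instance, for the $2$-graph with one vertex and $M_1=M_2=(2)$ one gets $K_0(B^\Lambda)\cong\ZZ[1/2]\neq\ZZ=\ZZ\Lambda^0$. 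What \cite[Theorem~3.14]{Eva} actually provides is that the chain map $\id\otimes\varepsilon^\Lambda_*:\bigwedge^*\ZZ^k\otimes\ZZ\Lambda^0\to\bigwedge^*\ZZ^k\otimes K_0(B^\Lambda)$ induced by $[\,s_v\,]\mapsto\varepsilon^\Lambda_*(\delta_v)$ is a quasi-isomorphism, not an isomorphism of the underlying groups. Consequently the genuine $E^1$-page of Kasparov's sequence is $\bigwedge^a\ZZ^k\otimes K_0(B^\Lambda)$, and one obtains $E^1_{a,b}=D^\Lambda_a$ only by \emph{replacing} that page with the quasi-isomorphic combinatorial complex $D^\Lambda_*$ (legitimate because the $E^2$-page, and hence convergence, is unchanged). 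This is what the paper does.

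The same error propagates into your step~(e): the ``naturality of the identification $K_0(B^\Lambda)\cong\ZZ\Lambda^0$'' that you flag as the main obstacle is not the right thing to check, since no such identification exists. What must be verified instead is that the two quasi-isomorphisms $\id\otimes\varepsilon^\Gamma_*$ and $\id\otimes\varepsilon^\Lambda_*$ intertwine, on homology, the combinatorial map $\id\otimes K_0(\iota)$ (which is $\qHa{*}$, where $\iota$ is extension by zero $C_0(\Gamma^0)\to C_0(\Lambda^0)$) with the analytic map $\id\otimes K_0(\varphi\times\id)$. This follows from the commutativity of the square $\varepsilon^\Lambda\circ\iota=(\varphi\times\id)\circ\varepsilon^\Gamma$ at the level of $C^*$-algebras, which in turn comes from Lemma~\ref{lem4.3}; since both horizontal maps in the resulting homology diagram are isomorphisms, the two vertical maps agree, and naturality of \cite[Proposition~5.4]{KumPasSim} then yields $f^1_{a,b}=\qHa{a}$. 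With the quasi-isomorphism correctly in place of the claimed isomorphism, the rest of your outline does go through along the paper's lines.
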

\begin{proof}
Firstly we briefly recall parts of the proof of \cite[Theorem~5.5]{KumPasSim} that we
will need. Since the vertex projections $s_v \in C^*(\Lambda)$ are fixed by the gauge
action, the map that sends $a \in C^*(\Lambda)$ to the constant function $z \mapsto a \in
C(\TT^k, C^*(\Lambda))$ determines a homomorphism $\varepsilon^\Lambda : C_0(\Lambda^0)
\cong C^*(\{s_v : v \in \Lambda^0\}) \to B^\Lambda$; so $\varepsilon^\Lambda(\delta_v)$
is the constant function $z \mapsto s_v$ on $\TT^k$. Let $\varepsilon^\Lambda_* :
\ZZ\Lambda^0 \to K_0(B^\Lambda)$ be the induced map in $K_0$. The map
$$\textstyle{\id\otimes \varepsilon^\Lambda_* : {\bigwedge}^*\,\ZZ^k \otimes \ZZ \Lambda^0 \to {\bigwedge}^*\,\ZZ^k \otimes K_0(B^\Lambda)}$$
is a map of complexes that induces an isomorphism on homology \cite[Theorem~3.14]{Eva}.
Thus, as in \cite{Eva, KumPasSim},
\begin{equation}\label{eqn4.3a}
\textstyle{H_*(\ZZ^k,K_0(B^\Lambda))\cong H_*({\bigwedge}^*\ZZ^k \otimes \ZZ \Lambda^0).}
\end{equation}
Therefore setting ${\Etilde}^1_{a,b}=D^\Lambda_a$ for $0\leq a\leq k$ and $b$ even, and
${\dtilde}^1_{a,b}=\partial^\Lambda_a$ for $1\leq a\leq k$ and $b$ even (and zero
otherwise), yields
\begin{align*}
{\Etilde}^2_{a,b}\cong\left\{
\begin{array}{ll}
H_a(\ZZ^k,K_0(B^\Lambda)) &\mbox{if } 0\leq a\leq k \mbox{ and }b\mbox{ even}, \\
0&\mbox{otherwise}.
\end{array}\right.
\end{align*}
It follows that the spectral sequence $({\Etilde}^r,{\dtilde}^r)$ converges to
$K_*(B^\Lambda \times_{\alpha^\Lambda} \ZZ^k)$ as required.

We now diverge slightly from the proof of \cite[Theorem~5.5]{KumPasSim} and apply
Lemma~\ref{lem4.3}: Fix a hereditary saturated set $H\subsetneq \Lambda^0$, and let
$\Gamma={H \Lambda}$. Repeating the previous argument on $\Gamma$ (using $E$ rather than
$\Etilde$ for the new sequence) we get spectral sequences $(E^r,d^r)$,
$({\Etilde}^r,{\dtilde}^r)$ satisfying properties (a)--(c).

Define $\varepsilon^\Gamma_*$ in the same way as $\varepsilon^\Lambda_*$ and let
$$\iota : C_0(\Gamma^0)\to C_0(\Lambda^0)$$ be the extension map extending by zero.
Using Lemma~\ref{lem4.3} and the canonical identifications $K_0(C_0(\Gamma^0))\cong
\ZZ\Gamma^0$ and $K_0(C_0(\Lambda^0))\cong \ZZ\Lambda^0$, we have the commuting diagram
below:
\begin{equation}\label{eqn4.3b}
\xymatrix{\ZZ\Gamma^0 \ar[rr]^-{\varepsilon^\Gamma_*} \ar[d]^{K_0(\iota)} && \ar[d]^{K_0(\varphi\times \id)} K_0(B^\Gamma)\\
\ZZ\Lambda^0 \ar[rr]^-{\varepsilon^\Lambda_*} && K_0(B^\Lambda).}
\end{equation}
Taking the tensor product of the chain complex $\bigwedge^* \ZZ^k$ with the groups
appearing in~\eqref{eqn4.3b} and applying the homology functor, we get
\[
\xymatrix{H_*(D^\Gamma_*)=H_*(\bigwedge^* \ZZ^k \otimes \ZZ\Gamma^0) \ar[rr]^-{H_*(\id\otimes \varepsilon^\Gamma_*)} \ar[d]^{H_*(\id\otimes K_0(\iota))} && \ar[d]^{H_*(\id\otimes K_0(\varphi\times \id))} H_*(\bigwedge^* \ZZ^k \otimes K_0(B^\Gamma))\\
H_*(D^\Lambda_*)=H_*(\bigwedge^* \ZZ^k \otimes \ZZ\Lambda^0) \ar[rr]^-{H_*(\id\otimes \varepsilon^\Lambda_*)} && H_*(\bigwedge^* \ZZ^k\otimes K_0(B^\Lambda)).}
\]

As in the displayed equation at the top of \cite[p.~187]{KumPasSim} we have isomorphisms
$H_*(\bigwedge^* \ZZ^k \otimes K_0(B^\Gamma))\to H_*(\ZZ^k, K_0(B^\Gamma))$ and
$H_*(\bigwedge^* \ZZ^k \otimes K_0(B^\Lambda))\to H_*(\ZZ^k, K_0(B^\Lambda))$ such that
the following diagram commutes
\begin{equation}\label{eqn4.4}
\xymatrix{H_*(D^\Gamma_*) \ar[rr]^-{H_*(\id\otimes \varepsilon^\Gamma_*)} \ar[d]^{H_*(\id\otimes K_0(\iota))} && \ar[d]^{H_*(\id\otimes K_0(\varphi\times \id))} H_*(\bigwedge^* \ZZ^k \otimes K_0(B^\Gamma)) \ar[r] & H_*(\ZZ^k, K_0(B^\Gamma)) \ar[d]\\
H_*(D^\Lambda_*) \ar[rr]^-{H_*(\id\otimes \varepsilon^\Lambda_*)} && H_*(\bigwedge^* \ZZ^k\otimes K_0(B^\Lambda))\ar[r] & H_*(\ZZ^k, K_0(B^\Lambda)).}
\end{equation}
The composition of the horizontal maps in the diagram \eqref{eqn4.4} is the isomorphism
described by Evans in \cite[Theorem~3.14]{Eva}. Hence $H_*(\id\otimes
\varepsilon^\Gamma_*)$ and $H_*(\id\otimes \varepsilon^\Lambda_*)$ are isomorphisms. Thus
the left-hand vertical map on homology (induced by $\id\otimes K_0(\iota)$) coincides
with the right-hand map (induced by $\id\otimes K_0(\varphi\times \id)$).

Applying the naturality of \cite[Proposition~5.4]{KumPasSim} yields a morphism $f$ of
spectral sequences compatible with $K_*(\hat\varphi) : K_*(B^\Gamma
\times_{\alpha^\Gamma} \ZZ^k)\to K_*(B^\Lambda \times_{\alpha^\Lambda} \ZZ^k)$ such that
$f^1_{a,b} : D_a^\Gamma \to D_a^\Lambda$ is given by $\id\otimes K_0(\iota)$. The latter
map agrees with $\qHa{a}$ from Example~\ref{eq4.1} giving (d)--(e) as required.
\end{proof}
Recall that a $C^*$-algebra $A$ is \emph{$K_0$-liftable} if for every pair of ideals
$I\subseteq J$ in $A$, the extension
\[
\xymatrix{0 \ar[r] & I \ar[r]^\varphi & J \ar[r]^\pi & J/I \ar[r] & 0}
\]
has the property that the induced map $K_0(\pi): K_0(J) \to K_0(J/I)$ is surjective, or
equivalently that $K_1(\varphi): K_1(I) \to K_1(J)$ is injective (see
\cite[Definition~3.1]{PasRor}). In fact, it suffices to consider $J = A$, a fact that the
authors of \cite{PasRor} attribute to Larry Brown. So $A$ is $K_0$-liftable if
$K_1(\varphi): K_1(I) \to K_1(A)$ is injective for every ideal $I$ in $A$.

The next lemma allows us to relate $K_0$-liftability of $C^*(\Lambda)$ to the homology of
the complex $D^\Lambda$ described earlier.

\begin{lemma}\label{lem:liftability<->homology}
Let $\Lambda$ be a row-finite $2$-graph with no sources. Suppose that $H \subseteq
\Lambda^0$ is saturated and hereditary. Let $\qH : D_*^\Gamma \to D_*^\Lambda$ be the map
of complexes induced by the inclusion $\Gamma^0 = H \hookrightarrow \Lambda^0$, and let
$\varphi : C^*(\Gamma) \to C^*(\Lambda)$ be the inclusion that carries a generator
$s_\lambda$ of $C^*(\Gamma)$ to the corresponding generator $s_\lambda$ of
$C^*(\Lambda)$. Then there are isomorphisms $K_1(C^*(\Gamma)) \cong H_1(D^\Gamma)$ and
$K_1(C^*(\Lambda)) \cong H_1(D^\Lambda)$ making the following diagram commute:
\[
\xymatrix{K_1(C^*(\Gamma)) \ar[r]^{\cong} \ar[d]^{K_1(\varphi)} & H_1(D^\Gamma_*) \ar[d]^{H_1(\qH)}\\
K_1(C^*(\Lambda)) \ar[r]^{\cong} & H_1(D^\Lambda_*).}
\]
In particular, $K_1(\varphi)$ is injective if and only if $H_1(\qH)$ is injective.
\end{lemma}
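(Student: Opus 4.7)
The plan is to exploit Lemma~\ref{lem4.4} together with Takai duality, and to observe that for $k = 2$ the relevant spectral sequences collapse at $E^2$. By Lemma~\ref{lem4.4}(b) the terms $E^1_{a,b}$ and $\Etilde^1_{a,b}$ are supported on $0\le a\le 2$ with $b$ even, so the same support condition holds on the $E^2$ page. The differential $d^2_{a,b} : E^2_{a,b}\to E^2_{a-2,b+1}$ then maps from an even-$b$ column to an odd-$b$ target and hence vanishes; the same parity argument shows $d^r=0$ for all $r\ge 2$. Therefore the sequence collapses with $E^\infty_{a,b}=E^2_{a,b}=H_a(D^\Gamma_*)$ on the even-$b$ strip, and similarly $\Etilde^\infty_{a,b}=H_a(D^\Lambda_*)$.

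Next I would read off $K_1$ from the convergence in Lemma~\ref{lem4.4}(a). For total degree $n=1$, the only pair $(a,b)$ with $0\le a\le 2$, $b$ even, and $a+b=1$ is $(1,0)$, so the filtration on $K_1(B^\Gamma\times_{\alpha^\Gamma}\ZZ^2)$ has exactly one nontrivial subquotient. This yields a canonical isomorphism $K_1(B^\Gamma\times_{\alpha^\Gamma}\ZZ^2)\cong H_1(D^\Gamma_*)$, and analogously $K_1(B^\Lambda\times_{\alpha^\Lambda}\ZZ^2)\cong H_1(D^\Lambda_*)$. Parts~(d)--(e) of Lemma~\ref{lem4.4} then ensure that under these isomorphisms the map $K_1(\hat\varphi)$ corresponds to $H_1(\qH)$.

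Finally I would pull everything back along Takai duality. The inclusion $\theta_\Lambda : C^*(\Lambda)\to B^\Lambda\times_{\alpha^\Lambda}\ZZ^2$ induces the Takai isomorphism on $K_1$, and similarly for $\Gamma$; and applying $K_1$ to the commuting square~\eqref{eqn4.3} of Lemma~\ref{lem4.3} gives $K_1(\hat\varphi)\circ K_1(\theta_\Gamma)=K_1(\theta_\Lambda)\circ K_1(\varphi)$. Composing this relation with the isomorphisms of the previous paragraph produces the required commutative square. The final clause is then immediate: because the horizontal maps in that square are isomorphisms, $K_1(\varphi)$ is injective if and only if $H_1(\qH)$ is. The main subtlety is really the spectral-sequence collapse and the single-subquotient bookkeeping at $n=1$; once those are in hand, the naturality furnished by Lemma~\ref{lem4.4} and Takai duality do the rest.
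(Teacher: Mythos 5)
Your proposal is correct and follows essentially the same route as the paper's proof: identify $K_1$ of the double crossed products with $E^2_{1,0}$ and $\Etilde^2_{1,0}$ via the collapse of the spectral sequences of Lemma~\ref{lem4.4} at total degree $1$, use parts (d)--(e) to match $K_*(\hat\varphi)$ with $H_1(\qH)$, and transfer back through the Takai-duality square of Lemma~\ref{lem4.3}. One trivial quibble: for odd $r \ge 3$ the higher differentials vanish because the target column index $a - r$ is negative (not by the parity of $b$), but the collapse conclusion is unaffected.
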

\begin{proof}
Define $\Gamma={H \Lambda}$. By Lemma~\ref{lem4.4} we have the spectral sequence
$({\Etilde}^r,{\dtilde}^r)$ converging to $K_0(B^\Lambda \times_{\alpha^\Lambda} \ZZ^2)$.
Since $\Lambda$ is a $2$-graph and by the convergence from Lemma~\ref{lem4.4}(a) we have
the following filtration of $\Kktilde_1=K_1(B^\Lambda \times_{\alpha^\Lambda} \ZZ^2)$:
\[
0=F_0(\Kktilde_1)\subseteq F_1(\Kktilde_1)\subseteq F_2(\Kktilde_1)=\Kktilde_1,
\]
with $F_1(\Kktilde_1)\cong {\Etilde}^2_{1,0}$, $F_2(\Kktilde_1)\cong \Kktilde_1$ and
$F_2(\Kktilde_1)/F_1(\Kktilde_1) = \{0\}$ (see \cite{Eva}). The same applies to the
convergent spectral sequence $({E}^r,{d}^r)$. We let ${\Kk}_*=K_*(B^\Gamma
\times_{\alpha^\Gamma} \ZZ^2)$ denote its limit. Since $f$ is compatible with
$K_*(\hat\varphi) : {\Kk}_*\to {\Kktilde}_*$ by Lemma~\ref{lem4.4}(d), we have the
commuting diagram
\begin{equation}\label{diag4.3}
\xymatrix{{\Kk}_1 \ar[r] \ar[d]^{K_0(\hat\varphi)} & E^2_{1,0} \ar[d]^{f^2_{1,0}}\\
{\Kktilde}_1 \ar[r] & {\Etilde}^2_{1,0},}
\end{equation}
and the horizontal maps are isomorphisms. Using that ${\Etilde}^2_{1,0}\cong
H({\Etilde}^1_{1,0})=H_1(D^\Lambda_*)$ and $E^2_{1,0}\cong H(E^1_{1,0})=H_1(D^\Gamma_*)$
we obtain the following commuting diagram:
\[
\xymatrix{E^2_{1,0} \ar[d]^{f^2_{1,0}}\ar[rr]^-{\alpha^1_{1,0}} && \ar[d]^{H_1(\qH)} H_1(D^\Gamma_*)\\
{\Etilde}^2_{1,0} \ar[rr]^-{{\alphatilde}^1_{1,0}} &&  H_1(D^\Lambda_*).}
\]
Applying the $K_1$-functor to the commuting diagram
\[
\xymatrix{C^*(\Gamma) \ar[d]^{\varphi}\ar[r] & \ar[d]^{\hat\varphi} B^\Gamma \times_{\alpha^\Gamma} \ZZ^2\\
C^*(\Lambda) \ar[r] &  B^\Lambda \times_{\alpha^\Lambda} \ZZ^2}
\]
of Lemma~\ref{lem4.3} we get a commuting diagram in which the horizontal maps are
invertible. Combining this and that $\Kktilde_1=K_1(B^\Lambda \times_{\alpha^\Lambda}
\ZZ^2)$ and $\Kk_1=K_1(B^\Gamma \times_{\alpha^\Gamma} \ZZ^2)$ with the previous diagrams
we obtain the commuting diagram
\[
\xymatrix{K_1(C^*(\Gamma)) \ar[r] \ar[d]^{K_1(\varphi)} & \ar[r] \ar[d]^{K_0(\hat\varphi)} K_1(B^\Gamma \times_{\alpha^\Gamma} \ZZ^2)=\Kk_1 & \ar[r] E^2_{1,0} \ar[d]^{f^2_{1,0}} & H_1(D^\Gamma_*) \ar[d]^{H_1(\qH)}\\
K_1(C^*(\Lambda)) \ar[r] & \ar[r] K_1(B^\Lambda \times_{\alpha^\Lambda} \ZZ^2)=\Kktilde_1 & \ar[r] {\Etilde}^2_{1,0} & H_1(D^\Lambda_*),}
\]
where the horizontal maps are all isomorphisms. The final statement follows immediately.
\end{proof}

\begin{proof}[Proof of Theorem~\ref{thm4.2}]
We start by proving (2), so assume that $C^*(\Lambda)$ has real-rank zero.
Lemma~\ref{lem:RR0->s.a.} implies that $\Lambda$ is strongly aperiodic. As alluded to
immediately after \cite[Definition~3.1]{PasRor}, all $C^*$-algebras of real rank zero are
$K_0$-liftable: Theorem~2.10 of \cite{BroPed} shows that if $A$ has real-rank zero, then
so does every $M_n(A)$, and then applying \cite[Theorem~3.14]{BroPed} to each $M_n(A)$
shows that if $I$ is an ideal of $A$ then every projection in $M_n(A/I)$ lifts to a
projection in $M_n(A)$. Since $A$, and hence also $A/I$, has real-rank zero, $K_0(A/I)$
is generated by classes of projections over $A/I$, and it follows that $A$ is
$K_0$-liftable. Hence every inclusion $I\subseteq C^*(\Lambda)$ of a proper ideal induces
an injective map on $K_1$-groups. In particular, for every hereditary saturated subset
$H\subseteq \Lambda^0$, the map $H_1(\qH)$ is injective by
Lemma~\ref{lem:liftability<->homology}.

We now prove~(1). Suppose that $C^*(\Lambda)$ is purely infinite. The preceding paragraph
establishes (i)$\Rightarrow$(ii). To prove the reverse implication (ii)$\Rightarrow$(i)
suppose that $\Lambda$ is strongly aperiodic and that $H_1(\qH)$ is injective for each
$H$. Theorem~\ref{thm2.3} implies that $C^*(\Lambda)$ has topological dimension zero. So
\cite[Theorem~4.2]{PasRor} implies that $C^*(\Lambda)$ has real-rank zero if and only if
it is $K_0$-liftable, and it suffices to establish the latter.

Fix a non-trivial ideal $I$ of $C^*(\Lambda)$. We need to show that the inclusion
$\iota\colon I \hookrightarrow C^*(\Lambda)$ induces an injective map at the level of
$K_1$-groups. Since $\Lambda$ is strongly aperiodic it follows from
Corollary~\ref{thm.ip} that $I=I_H$ for some nonempty hereditary saturated $H\subsetneq
\Lambda^0$. Let $\Gamma := H\Lambda$. As in Lemma~\ref{lem2.6}, the $C^*$-algebra
$C^*(\Gamma)$ is isomorphic to a full corner of $I$, and so the inclusion map
$C^*(\Gamma) \hookrightarrow I$ induces an isomorphism in $K$-theory. We obtain the
following commuting diagram:
\[
\xymatrix{K_1(C^*(\Gamma)) \ar[r] \ar[d]^{K_1(\varphi)} & K_1(I) \ar[d]^{K_1(\iota)}\\
K_1(C^*(\Lambda)) \ar[r]^{\id} & K_1(C^*(\Lambda)).}
\]
So $K_1(\iota)$ is injective if and only if $K_1(\varphi)$ is injective. By
condition~(ii), the map $H_1(\qH)$ is injective, and so
Lemma~\ref{lem:liftability<->homology} shows that $K_1(\varphi)$ is injective as
required.
\end{proof}

To finish off the section, we show how to reduce checking real-rank zero for 2-graph
algebras with finite ideal lattice to checking real-rank zero for simple 2-graph
algebras, without any assumption of pure infiniteness. We thank an anonymous referee for
pointing out this application of our arguments.

\begin{prop}
Let $\Lambda$ be a row-finite $2$-graph with no sources, and suppose that $C^*(\Lambda)$
has finite ideal lattice. Then $\Lambda$ is strongly aperiodic, and the following are
equivalent:
\begin{itemize}
\item[(1)] $C^*(\Lambda)$ has real-rank zero;
\item[(2)] Both of the following hold:
    \begin{itemize}
    \item[(i)] for every saturated hereditary $H \subseteq \Lambda^0$, the map
        $H_1(\qH)$ of Example~\ref{eq4.1} is injective; and
    \item[(ii)] for every pair $K \subseteq H$ where $K$ is a saturated
        hereditary subset of $\Lambda^0$ and $H \setminus K$ is a saturated
        hereditary subset of $\Lambda \setminus \Lambda K$ such that $H \Lambda
        \setminus \Lambda K$ is cofinal, the $C^*$-algebra $C^*(H \Lambda
        \setminus \Lambda K)$ has real-rank zero.
    \end{itemize}
\end{itemize}
\end{prop}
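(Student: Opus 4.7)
My plan begins by establishing that finite ideal lattice forces $\Lambda$ to be strongly aperiodic. If $\Lambda$ is not strongly aperiodic, Lemma~\ref{lem2.9} produces a saturated hereditary $H' \subsetneq \Lambda^0$ such that $\Lambda\setminus\Lambda H'$ has maximal-tail vertex set and nontrivial periodicity group; the machinery of Lemmas~\ref{lem2.7}--\ref{lem2.10} used inside the proof of Theorem~\ref{thm2.3} then exhibits a subquotient of $C^*(\Lambda)$ whose primitive ideal space surjects continuously and openly onto a nontrivial torus $\TT^\ell$, which is infinite. Since $\Prim$ is $T_0$, this subquotient already has infinite ideal lattice, and hence so does $C^*(\Lambda)$, contradicting the hypothesis. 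Strong aperiodicity then identifies the ideals of $C^*(\Lambda)$ with the saturated hereditary subsets of $\Lambda^0$ via Corollary~\ref{thm.ip}.

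The direction (1)$\Rightarrow$(2) is essentially inheritance. Condition (i) is the statement of Theorem~\ref{thm4.2}(2). For condition (ii), applying Lemma~\ref{lem2.6} to the quotient $2$-graph $\Lambda\setminus\Lambda K$ and its saturated hereditary subset $H\setminus K$ identifies $C^*(H\Lambda \setminus \Lambda K)$ with a full corner of an ideal of $C^*(\Lambda)/I_K$; since real-rank zero passes to ideals, quotients, and full corners, $C^*(H\Lambda\setminus\Lambda K)$ inherits real-rank zero from $C^*(\Lambda)$.

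For (2)$\Rightarrow$(1), my plan is to induct along a maximal chain of ideals, applying the Brown--Pedersen extension criterion (\cite[Theorem~3.14]{BroPed}) at each step. Since the ideal lattice is finite and in bijection with saturated hereditary subsets, I choose a maximal chain
\[
\emptyset = K_0 \subsetneq K_1 \subsetneq \cdots \subsetneq K_n = \Lambda^0
\]
of saturated hereditary subsets with no intermediate saturated hereditary set between consecutive terms. I prove by induction that each $I_{K_j}$ has real-rank zero. For the inductive step, consider the extension $0 \to I_{K_{j-1}} \to I_{K_j} \to I_{K_j}/I_{K_{j-1}} \to 0$; the quotient is Morita equivalent to $C^*(K_j\Lambda \setminus \Lambda K_{j-1})$, which is simple by maximality (hence cofinal), and so has real-rank zero by (ii). Condition (i), together with Lemma~\ref{lem:liftability<->homology}, gives injectivity of $K_1(I_{K_{j-1}}) \to K_1(C^*(\Lambda))$, and this factors through $K_1(I_{K_j})$, so $K_1(I_{K_{j-1}}) \to K_1(I_{K_j})$ is injective. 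In the presence of real-rank zero for ideal and quotient, this $K_1$-injectivity is equivalent to the projection-lifting hypothesis of Brown--Pedersen, which then delivers real-rank zero of $I_{K_j}$. The main obstacle is the careful identification of each subquotient with a simple $2$-graph algebra of the form appearing in (ii) (specifically, that $K_j \Lambda \setminus \Lambda K_{j-1}$ is cofinal); once this matching is established, the induction runs cleanly, with condition (i) supplying exactly the $K_1$-injectivity needed to invoke Brown--Pedersen at each step.
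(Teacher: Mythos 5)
Your proposal is correct, but it differs from the paper's argument in two places, and the comparison is instructive. For strong aperiodicity, the paper simply observes that a finite $T_0$ space automatically has a basis of compact open sets, so finite ideal lattice gives topological dimension zero and Theorem~\ref{thm2.3} applies; you instead re-run the machinery of Lemmas \ref{lem2.7}--\ref{lem2.10} to exhibit a subquotient whose primitive ideal spectrum surjects onto a nontrivial torus and is therefore infinite. Your route is valid (ideals of full corners and of subquotients inject into the ideal lattice of $C^*(\Lambda)$), but it is heavier than necessary. For (2)$\Rightarrow$(1) the two arguments are mirror images: the paper inducts downward on $|\Prim C^*(\Lambda)|$, peeling off a simple quotient $C^*(\Lambda)/I_K$, invoking (2ii) with $H=\Lambda^0$ for the quotient, using $K_0$-liftability (from (2i) via Lemma~\ref{lem:liftability<->homology}) plus \cite[Propositions 3.14--3.15]{BroPed} to reduce to $I_K$, and then applying the inductive hypothesis to $\Gamma=K\Lambda$, which requires verifying that hypotheses (2i)--(2ii) pass to $\Gamma$; you instead induct upward along a maximal chain $\emptyset=K_0\subsetneq\cdots\subsetneq K_n=\Lambda^0$ of saturated hereditary sets, identifying each simple subquotient $I_{K_j}/I_{K_{j-1}}$ with $C^*(K_j\Lambda\setminus\Lambda K_{j-1})$ (which is cofinal by maximality of the chain, so (2ii) applies directly), and extracting the needed $K_1$-injectivity of $K_1(I_{K_{j-1}})\to K_1(I_{K_j})$ by factoring through $K_1(C^*(\Lambda))$, where (2i) and Lemma~\ref{lem:liftability<->homology} give injectivity. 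Your version has the small advantage that the inductive hypothesis is just ``$I_{K_{j-1}}$ has real-rank zero,'' so you never need to check that conditions (2i)--(2ii) descend to a subgraph; the cost is that you must verify the identification of each consecutive subquotient with $C^*(K_j\Lambda\setminus\Lambda K_{j-1})$ and its cofinality, which you correctly flag as the main point requiring care (it follows from \cite[Theorem~5.2]{RaeSimYee} together with the fact that $K_{j-1}$ hereditary forces $r(\lambda)\notin K_{j-1}$ whenever $s(\lambda)\notin K_{j-1}$). Both arguments rest on the same three pillars: the ideal/saturated-hereditary bijection from Corollary~\ref{thm.ip}, the Brown--Pedersen extension criterion, and Lemma~\ref{lem:liftability<->homology}.
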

\begin{proof}
Every $C^*$-algebra with finite ideal lattice has topological dimension zero, and so
Theorem~\ref{thm2.3} shows that $\Lambda$ is strongly aperiodic.

First suppose that $C^*(\Lambda)$ has real-rank zero. Then part~(2) of
Theorem~\ref{thm4.2} gives~(2i). Fix a nested pair $K \subseteq H$ of saturated
hereditary sets such that $H \Lambda \setminus \Lambda K$ is cofinal. Then $C^*(\Lambda
\setminus \Lambda H)$ is a quotient of $C^*(\Lambda)$ by \cite[Theorem~5.2(b)]{RaeSimYee}
and therefore has real-rank zero. Moreover, $C^*(H \Lambda \setminus \Lambda K)$ is
isomorphic to a full corner of an ideal in $C^*(\Lambda \setminus \Lambda K)$ by
\cite[Theorem~5.2(c)]{RaeSimYee}, and therefore itself also has real-rank zero,
giving~(2ii).

Now suppose that~(2) holds. We prove that $C^*(\Lambda)$ has real-rank zero by induction
on $|\Prim C^*(\Lambda)|$. If $\Prim C^*(\Lambda)$ is a singleton, then $\Lambda$ itself
is cofinal \cite[Proposition~4.8]{KumPas}, so we can take $K = \emptyset$ and $H =
\Lambda^0$ in~(2) to see that $C^*(\Lambda) = C^*(H\Lambda \setminus \Lambda K)$ has
real-rank zero by hypothesis. Now fix $\Lambda$ and suppose that our assertion holds for
every strongly aperiodic $k$-graph with fewer primitive ideals than $C^*(\Lambda)$. Since
$\Prim C^*(\Lambda)$ is finite, we can fix an ideal $I$ of $C^*(\Lambda)$ such that
$C^*(\Lambda)/I$ is simple. Since $\Lambda$ is strongly aperiodic, $I$ is generated by
$\{p_v : v \in K\}$ for some saturated hereditary $K \subseteq \Lambda^0$. We then have
$C^*(\Lambda)/I \cong C^*(\Lambda \setminus \Lambda K)$ by
\cite[Theorem~5.2(b)]{RaeSimYee} again, and since this $C^*$-algebra is simple, $\Lambda
\setminus \Lambda K$ is cofinal. So~(2ii) with $H = \Lambda^0$ says that $C^*(\Lambda)/I$
has real-rank zero. Lemma~\ref{lem:liftability<->homology} and~(2i) show that
$C^*(\Lambda)$ is $K_0$-liftable, and it follows from \cite[Proposition~3.15]{BroPed}
that if $I$ has real-rank zero, then every projection in $C^*(\Lambda)/I$ lifts to a
projection in $C^*(\Lambda)$. Therefore, by \cite[Theorem~3.14]{BroPed}, to prove that
$C^*(\Lambda)$ has real-rank zero, it now suffices to prove that $I$ has real-rank zero.

Consider the $k$-graph $\Gamma := K\Lambda$. By Lemma~\ref{lem2.6}, $C^*(\Gamma)$ is
isomorphic to a full corner of $I$, so \cite[Theorem~3.8]{BroPed} implies that it
suffices to prove that $C^*(\Gamma)$ has real-rank zero. Since $C^*(\Lambda)$ is
$K_0$-liftable, so is $I$, and hence also $C^*(\Gamma)$ since the inclusion map
$C^*(\Gamma) \hookrightarrow I$ induces an isomorphism in $K$-theory. The $k$-graph
$\Gamma$ is strongly aperiodic because $\Lambda$ is strongly aperiodic. Suppose that $K'$
and $H'$ are saturated hereditary subsets of $\Gamma^0$ as in~(2ii) for $\Gamma$; so $H'
\setminus K'$ is saturated and hereditary in $\Gamma \setminus \Gamma K'$, and $H'\Gamma
\setminus\Gamma K'$ is cofinal. Then $H'$ and $K'$ also satisfy these hypotheses with
respect to $\Lambda$ (because $\Gamma^0 = K$ is saturated in $\Lambda$).  Hence
hypothesis~(2ii) for $C^*(\Lambda)$ implies that $C^*(H' \Gamma \setminus \Gamma K') =
C^*(H'\Lambda \setminus \Lambda K')$ has real-rank zero. Thus $\Gamma$ satisfies
(2i)~and~(2ii). By construction, $C^*(\Gamma) = C^*(\Lambda)/I$ has fewer primitive
ideals than $C^*(\Lambda)$, and so the inductive hypothesis implies that $C^*(\Gamma)$
has real-rank zero as required.
\end{proof}

\section{Purely infinite \texorpdfstring{$k$}{k}-graph \texorpdfstring{$C^*$}{C*}-algebras}\label{sec:pi}
In this section we turn our attention to pure infiniteness of higher-rank graph
$C^*$-algebras, which is one of the assumptions in Theorem~\ref{thm4.2}. In
\cite{KR-infty}, Kirchberg and R{\o}rdam introduced three separate notions of purely
infinite $C^*$-algebras; weakly purely infinite, purely infinite and strongly purely
infinite. As the names suggest, strong pure infiniteness implies pure infiniteness which
implies weak pure infiniteness. Of these notions, strong pure infiniteness is perhaps the
most useful in the classification of non-simple $C^*$-algebras. Indeed, Kirchberg showed
in \cite{Kir04} that two separable, nuclear, stable, strongly purely infinite
$C^*$-algebras with the same primitive ideal space $X$ are isomorphic if and only if they
are $KK_X$-equivalent.

Following \cite{KirRor}, we denote the set of positive elements in a $C^*$-algebra $A$ by
$A^+$. The ideal in $A$ generated by an element $b$ is denoted $\overline{AbA}$. Recall
that for positive elements $a\in M_n(A)$ and $b\in M_m(A)$, we say that $a$ is {\em Cuntz
below} $b$, denoted $a\precsim b$, if there exists a sequence of elements $x_k$ in
$M_{m,n}(A)$ such that $x_k^* b x_k\to a$ in norm. We say $A$ is {\em purely infinite} if
there are no characters on $A$ and for all $a,b\in A^+$, we have $a\precsim b$ if and
only if $a\in \overline{AbA}$ (see \cite[Definition~4.1]{KirRor}).

Let $\Lambda$ be a $2$-graph and $u \in \Lambda^0$. As introduced in \cite{KanPas}, given
$a,b \in \ZZ^+$, an $(a,b)$-\emph{aperiodic quartet}, or just an \emph{aperiodic
quartet}, at $u$ consists of distinct paths $\alpha_1, \alpha_2 \in u \Lambda^{a e_1} u$
and distinct paths $\beta_1, \beta_2 \in u \Lambda^{b e_2} u$ such that $\beta_2 \alpha_1
= \alpha_1 \beta_2$, $\beta_2 \alpha_2 = \alpha_2 \beta_2$, $\beta_1 \alpha_1 = \alpha_2
\beta_1$, and $\beta_1 \alpha_2 = \alpha_1 \beta_1$.

\begin{prop} \label{prop:aq-pis}
Let $\Lambda$ be a row-finite $2$-graph with no sources. Suppose that there is an
aperiodic quartet at $u$ for each $u\in \Lambda^0$. Then $C^*(\Lambda)$ is strongly
purely infinite.
\end{prop}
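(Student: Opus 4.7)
The plan is to combine three ingredients: the characterisation of pure infiniteness established earlier in Section~\ref{sec:pi} in terms of proper infiniteness of the vertex projections; Theorem~\ref{thm2.3}, which yields topological dimension zero from strong aperiodicity; and the Kirchberg--R{\o}rdam bootstrap (as exploited in \cite{PasRor}) that a purely infinite $C^*$-algebra of topological dimension zero is automatically strongly purely infinite.

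First I would verify that an $(a,b)$-aperiodic quartet at $u$ already forces $s_u$ to be properly infinite. The two distinct loops $\alpha_1, \alpha_2 \in u\Lambda^{ae_1}u$ satisfy $s_{\alpha_i}^* s_{\alpha_i} = s_u$ by (CK3), and since $\alpha_1 \ne \alpha_2$ both lie in $u\Lambda^{ae_1}$, (CK4) yields $s_{\alpha_1} s_{\alpha_1}^* + s_{\alpha_2} s_{\alpha_2}^* \le s_u$. Thus $s_u$ dominates two orthogonal subprojections each Murray--von Neumann equivalent to $s_u$, which is proper infiniteness. Applied at every vertex, this makes every $s_v \in C^*(\Lambda)$ properly infinite, so by the Section~\ref{sec:pi} characterisation $C^*(\Lambda)$ is purely infinite.

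Next I would promote this to topological dimension zero by checking that the quartet hypothesis passes to every quotient $\Lambda \setminus \Lambda H$ for $H \subsetneq \Lambda^0$ hereditary and saturated. Fix $u \notin H$; if some intermediate vertex $v = \alpha_i(n)$ lay in $H$, then hereditariness applied to the tail $\alpha_i(n, d(\alpha_i)) \in v\Lambda \subseteq H\Lambda$ would force $u = s(\alpha_i(n, d(\alpha_i))) \in H$, a contradiction. Hence every path of the quartet at $u$ lies entirely outside $H$, and the same data serves as an $(a,b)$-aperiodic quartet at $u$ in $\Lambda \setminus \Lambda H$. By the argument of Kang--Pask \cite{KanPas}, which uses the conjugation relations $\beta_1\alpha_i = \alpha_{3-i}\beta_1$ in a quartet to construct an aperiodic infinite path based at $u$, each quotient $\Lambda \setminus \Lambda H$ is aperiodic; that is, $\Lambda$ is strongly aperiodic. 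Theorem~\ref{thm2.3} then gives topological dimension zero for $C^*(\Lambda)$.

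Finally, the Kirchberg--R{\o}rdam theorem upgrades pure infiniteness to strong pure infiniteness in the presence of topological dimension zero, completing the proof. The most delicate step is the combinatorial invocation of \cite{KanPas} producing aperiodic infinite paths from the quartet relations in each quotient; the rest is direct bookkeeping with the machinery already assembled in the paper, together with the Cuntz--Krieger relations (CK1)--(CK4).
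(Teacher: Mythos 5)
Your proposal is correct and follows essentially the same route as the paper: the quartet relations make each $s_v$ properly infinite via (CK3)--(CK4), Lemma~\ref{lem5.2}(ii) (whose strong-aperiodicity hypothesis you supply from the quartets, exactly as \cite[Proposition~3.9]{KanPas} does) gives pure infiniteness, and a Pasnicu--R{\o}rdam result upgrades this to strong pure infiniteness. The only cosmetic difference is in that last upgrade: the paper passes through the ideal property (Lemma~\ref{lem5.3} together with \cite[Proposition~2.14]{PasRor}) rather than through topological dimension zero, but by \cite[Proposition~2.11]{PasRor} and Corollary~\ref{thm.ip} the two are interchangeable here.
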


Before giving a proof of Proposition~\ref{prop:aq-pis} we present a characterisation of
when the $C^*$-algebra $C^* ( \Lambda )$ of a row-finite strongly aperiodic $k$-graph
$\Lambda$ with no sources is purely infinite. Recall that a projection $p$ in a
$C^*$-algebra $A$ is {\em infinite} if it is Murray--von Neumann equivalent to a proper
subprojection of itself, and {\em properly infinite} if there are two mutually orthogonal
subprojections of $p$ in $A$, each one Murray--von Neumann equivalent to $p$.

\begin{lemma}\label{lem5.2}
Let $\Lambda$ be a row-finite $k$-graph with no sources. Suppose that $\Lambda$ is
strongly aperiodic. Then the following are equivalent
\begin{itemize}
\item[(i)] The $C^*$-algebra $C^*(\Lambda)$ is purely infinite.
\item[(ii)] For every $v\in \Lambda^0$ the projection $s_v$ is properly infinite in
    $C^*( \Lambda)$.
\item[(iii)] For every hereditary saturated $H\subseteq \Lambda^0$ and every $v\in
    \Lambda^0\setminus H$, the projection $s_v$ is infinite in $C^*(\Lambda\setminus
    \Lambda H)$.
\item[(iii')] For every hereditary saturated $H\subseteq \Lambda^0$ and every $v\in
    \Lambda^0\setminus H$, the projection $s_v$ is properly infinite in
    $C^*(\Lambda\setminus \Lambda H)$.
\item[(iv)] For every hereditary saturated $H\subseteq \Lambda^0$ and every $v\in
    \Lambda^0\setminus H$, the $C^*$-algebra $s_vC^*(\Lambda\setminus \Lambda H)s_v$
    contains an infinite projection.
\item[(iv')] For every hereditary saturated $H\subseteq \Lambda^0$ and every $v\in
    \Lambda^0\setminus H$, the $C^*$-algebra $s_vC^*(\Lambda\setminus \Lambda H)s_v$
    contains a properly infinite projection.
\item[(v)] Every non-zero hereditary sub-$C^*$-algebra in any quotient of $C^*(
    \Lambda)$ contains an infinite projection.
\item[(v')] Every non-zero hereditary sub-$C^*$-algebra in any quotient of $C^*(
    \Lambda)$ contains a properly infinite projection.
\end{itemize}
\end{lemma}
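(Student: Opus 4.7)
The plan is to establish the equivalences through a cycle of implications, leveraging two principal tools: the Kirchberg--R{\o}rdam characterisation of pure infiniteness through hereditary sub-$C^*$-algebras of quotients, and the ideal structure of $C^*(\Lambda)$ supplied by Corollary~\ref{thm.ip}---strong aperiodicity forces every ideal to be of the form $I_H$ for a saturated hereditary $H$, the quotient is $C^*(\Lambda\setminus\Lambda H)$, and $\Lambda\setminus\Lambda H$ is itself strongly aperiodic. A convenient target cycle is
\[
\text{(i)}\Rightarrow\text{(ii)}\Rightarrow\text{(iii')}\Rightarrow\text{(iv')}\Rightarrow\text{(v')}\Rightarrow\text{(v)}\Rightarrow\text{(iv)}\Rightarrow\text{(iii)}\Rightarrow\text{(i)},
\]
with the auxiliary equivalences (iii')$\Rightarrow$(iii), (iv')$\Rightarrow$(iv) falling out by definition.

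First I would dispatch the routine steps. The primed conditions imply their unprimed counterparts by definition; (iii)$\Rightarrow$(iv) and (iii')$\Rightarrow$(iv') follow by taking $p:=s_v$; and (iv)$\Rightarrow$(iii) uses the standard fact that if $p\leq s_v$ is infinite, say $u^*u=p$ and $uu^*=p'\lneq p$, then $u':=u+(s_v-p)$ satisfies $u'^*u'=s_v$ and $u'u'^*=p'+(s_v-p)\lneq s_v$. For (i)$\Rightarrow$(ii) I invoke that every non-zero projection in a purely infinite $C^*$-algebra is properly infinite. For (ii)$\Rightarrow$(iii'), the quotient map $C^*(\Lambda)\twoheadrightarrow C^*(\Lambda\setminus\Lambda H)$ carries $s_v$ to the canonical generator $s_v$ of the quotient, which is non-zero for $v\notin H$ by Corollary~\ref{thm.ip}, and surjective homomorphisms preserve proper infiniteness of non-zero images. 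Finally, (v)$\Rightarrow$(iv) and (v')$\Rightarrow$(iv') follow by specialising to the hereditary corner $s_vC^*(\Lambda\setminus\Lambda H)s_v$.

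The substantive content lies in (iv')$\Rightarrow$(v') and (iii)$\Rightarrow$(i). Both rely on the standard graph-algebra technique that, for strongly aperiodic $\Lambda$, every non-zero hereditary sub-$C^*$-algebra $B$ of $C^*(\Lambda)$ contains a projection Murray--von Neumann equivalent to a vertex projection $s_v$. One obtains such a projection by choosing $0\neq b\in B^+$, approximating $b$ via the gauge-invariant conditional expectation onto the fixed-point algebra, and then using an aperiodic infinite path through a vertex $v$ in the support of the approximation to produce an element $x$ with $x^*bx = s_v$ (this is the aperiodic-path argument from \cite{KumPas, RobSim}, extended to quotients as in \cite{BroClaSie}). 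Since $\Lambda\setminus\Lambda H$ is strongly aperiodic for every saturated hereditary $H$, the technique applies equally in every quotient. Then (iv')$\Rightarrow$(v') transports the properly infinite projection from $s_vC^*(\Lambda\setminus\Lambda H)s_v$ into $pBp\subseteq B$ via the Murray--von Neumann equivalence; and (iii)$\Rightarrow$(i) combines infiniteness of $s_v\sim p\in B$ with \cite[Proposition~4.7]{KirRor}, which asserts that $C^*(\Lambda)$ is purely infinite precisely when every non-zero hereditary sub-$C^*$-algebra of every quotient contains an infinite projection.

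The main obstacle will be the careful implementation of the vertex-equivalent-projection technique simultaneously in $C^*(\Lambda)$ and in each of the infinitely many quotients $C^*(\Lambda\setminus\Lambda H)$, matching the conditional expectation and the aperiodic-path selection across the quotient maps. Strong aperiodicity of $\Lambda$ is precisely what propagates aperiodicity into all quotients; without it the reduction to vertex corners---and hence the entire cycle---would break.
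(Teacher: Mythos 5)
Your proof is correct and takes essentially the same route as the paper's: the same trivial implications, the same Kirchberg--R{\o}rdam facts for (i)$\Rightarrow$(ii), for the behaviour of (properly) infinite projections under quotient maps, and for recovering pure infiniteness from condition (v), and the same key device of using strong aperiodicity (topological principality of the quotient groupoids, as in \cite[Lemma~3.2]{BroClaSie}) to locate a projection Murray--von Neumann equivalent to a vertex projection inside every non-zero hereditary subalgebra of every quotient. The only difference is a harmless rearrangement of the cycle --- the paper places the substantive argument in (iv)$\Rightarrow$(v), whereas you place it in (iii)$\Rightarrow$(i) and add the easy observation that an infinite subprojection of $s_v$ forces $s_v$ to be infinite --- which does not change the mathematics.
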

\begin{proof}[Proof of Lemma~\ref{lem5.2}]
We first prove
(i)$\Leftrightarrow$(ii)$\Leftrightarrow$(iii)$\Leftrightarrow$(iv)$\Leftrightarrow$(v).

The implication (i)$\Rightarrow$(ii) is known, see \cite[Theorem~4.16]{KirRor}. For
(ii)$\Rightarrow$(iii), observe that \cite[Theorem~5.2]{RaeSimYee} shows that
$C^*(\Lambda\setminus \Lambda H)$ is isomorphic to a quotient of $C^*(\Lambda)$, and that
the quotient map carries $s_v$ to zero if and only if $v \in H$. Corollary~3.15 of
\cite{KirRor} says that the image of a properly infinite projection under a
$C^*$-homomorphism is either zero or infinite, giving (ii)$\Rightarrow$(iii). The
implication (iii)$\Rightarrow$(iv) is trivial because $s_v\in s_vC^*(\Lambda\setminus
\Lambda H)s_v$ for each $v \in \Lambda^0 \setminus H$.

For (iv)$\Rightarrow$(v), fix a proper ideal $I$ of $C^*(\Lambda)$. Let $B$ be a non-zero
hereditary subalgebra of $C^*( \Lambda)/I$. By Corollary~\ref{thm.ip} and
\cite[Theorem~5.2]{RaeSimYee}, there is a saturated hereditary subset $H \subseteq
\Lambda^0$ such that $\Gamma := \Lambda \setminus \Lambda H$ satisfies
$C^*(\Lambda)/I\cong C^*(\Gamma)$. So $B$ is isomorphic to a hereditary subalgebra of
$C^*(\Gamma)$. We identify the two henceforth.

Select any non-zero positive element $a\in B$. Since $\Lambda$ is strongly aperiodic,
Theorem~\ref{thm.ip} implies that $\Gg_\Gamma$ is topologically principal. Hence
\cite[Lemma~3.2]{BroClaSie} provides a non-zero positive element $h\in C^*(\{s_\lambda
s^*_\lambda : \lambda \in \Gamma\})\cong C_0(\Gg^{(0)}_{\Gamma})$ such that $h\precsim
a$. Recall that the cylinder sets $\{Z(\lambda) : \lambda \in \Gamma\}$ form a basis for
the topology on $\Gg^{(0)}_\Lambda$ (see \cite[Definition 2.4]{KumPas}). Choose
$\lambda\in \Gamma$ such that $h(x)\geq \|h\|/2>0$ for all $x\in Z(\lambda)$, and let $v
:= s(\lambda) \in \Gamma^0$. Let $g\in C_0(\Gg^{(0)}_{\Gamma})$ be the function
$g(x):=h(x)^{-1/2}$ for $x\in Z(\lambda)$ and zero otherwise. Then
$ghg=1_{Z(\lambda)}=s_\lambda s_\lambda^*$ and
$(gs_\lambda)^*hgs_\lambda=s_\lambda^*s_\lambda s_\lambda^*s_\lambda=s_{v}$. In
particular, $s_{v}\precsim h$ (take $x_k := gs_\lambda$ for each $k$). By assumption~(iv)
there is an infinite projection $p\in s_vC^*(\Gamma)s_v$. By
\cite[Proposition~2.7]{KirRor} we have $p\precsim s_v$. Hence $p\precsim s_v\precsim h
\precsim a$, so $p\precsim a$. As explained after \cite[Proposition~2.6]{KirRor}, there
exists $x\in C^*(\Gamma)$ such that $p=x^*ax$. Let $y := a^{1/2}x$. Then $y^*y=x^*ax=p$,
and $q:=yy^*=a^{1/2}xx^*a^{1/2}\in B$. Since $p$ is infinite and Murray--von Neumann
equivalent to $q$, we conclude that $q \in B$ is infinite.

The implication (v)$\Rightarrow$(i) follows from \cite[Proposition~4.7]{KirRor}.

It now suffices to prove
(ii)$\Rightarrow$(iii')$\Rightarrow$(iv')$\Rightarrow$(v')$\Rightarrow$(v). Recall that
if $p$ is properly infinite and $\phi$ is a $C^*$-homomorphism such that $\phi(p) \not=
0$, then $\phi(p)$ is properly infinite. This and the second statement of
Lemma~\ref{lem2.6} gives (ii)$\Rightarrow$(iii'). The implication
(iii')$\Rightarrow$(iv') is trivial, and (iv')$\Rightarrow$(v') follows from the argument
for (iv)$\Rightarrow$(v) above because any projection $p$ that is Murray--von Neumann
equivalent to a properly infinite projection is itself is properly infinite. Since proper
infiniteness is stronger than infiniteness, condition~(v') implies~(v), completing the
proof.
\end{proof}

\begin{cor}	
Let $\Lambda$ be a row-finite $k$-graph with no sources and suppose that $C^*(\Lambda)$
has real-rank zero. Then $C^*(\Lambda)$ is strongly purely infinite if
and only if $s_v$ is properly
infinite for every $v\in \Lambda^0$.
\end{cor}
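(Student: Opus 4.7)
The plan is to reduce the corollary to Lemma~\ref{lem5.2} together with the known fact (Kirchberg--R{\o}rdam) that for $C^*$-algebras of real-rank zero, pure infiniteness and strong pure infiniteness coincide.

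For the forward direction, assume $C^*(\Lambda)$ is strongly purely infinite. Then it is in particular purely infinite, so the implication (i)$\Rightarrow$(ii) of Lemma~\ref{lem5.2} gives that $s_v$ is properly infinite for every $v\in\Lambda^0$. (This direction does not even require the real-rank zero hypothesis.)

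For the reverse direction, suppose each $s_v$ is properly infinite. Since $C^*(\Lambda)$ has real-rank zero, Lemma~\ref{lem:RR0->s.a.} implies that $\Lambda$ is strongly aperiodic, putting us in the setting of Lemma~\ref{lem5.2}. The implication (ii)$\Rightarrow$(i) of that lemma then yields that $C^*(\Lambda)$ is purely infinite. At this point the only remaining step is to upgrade purely infinite to strongly purely infinite. For this I would invoke the result of Kirchberg and R{\o}rdam that a purely infinite $C^*$-algebra of real-rank zero is automatically strongly purely infinite (see \cite[Proposition~5.11]{KR-infty}, where this equivalence for real-rank zero algebras is established via the fact that in a real-rank zero algebra, positive elements are Cuntz-dominated by projections, and the $n$-filling property characterising strong pure infiniteness collapses to the corresponding property tested on projections, which is implied by pure infiniteness).

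The main, and essentially only, substantive ingredient is this last external input; everything else is a direct application of Lemmas~\ref{lem:RR0->s.a.} and~\ref{lem5.2}. The statement of the corollary is therefore largely a convenient repackaging of Lemma~\ref{lem5.2} once real-rank zero is available to make the distinction between purely infinite and strongly purely infinite disappear.
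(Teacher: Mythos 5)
Your argument is correct and follows essentially the same route as the paper, which likewise combines Lemma~\ref{lem:RR0->s.a.} and Lemma~\ref{lem5.2} with the external fact that a purely infinite $C^*$-algebra of real rank zero is automatically strongly purely infinite (the paper cites \cite[Corollary~6.9]{PasRor} for this step, whereas you cite \cite{KR-infty}; both are standard sources for it). The only small inaccuracies are cosmetic: your parenthetical sketch of that external fact in terms of an ``$n$-filling property'' is not how strong pure infiniteness is characterised in \cite{KR-infty}, and the forward direction, if phrased as an application of Lemma~\ref{lem5.2}, formally needs strong aperiodicity (which you do have, via Lemma~\ref{lem:RR0->s.a.}) rather than being hypothesis-free---though the underlying fact from \cite[Theorem~4.16]{KirRor} indeed is.
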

\begin{proof}
This follows from \cite[Corollary~6.9]{PasRor} combined with and Lemmas
\ref{lem:RR0->s.a.}~and~\ref{lem5.2}.
\end{proof}

Following \cite{PasRor}, recall that a $C^*$-algebra $A$ has the \emph{ideal property},
abbreviated (IP), if projections in $A$ separate ideals in $A$; that is, whenever $I$,
$J$ are distinct ideals in $A$, there is a projection in $I\setminus J$.

\begin{lemma}\label{lem5.3}
Let $\Lambda$ be a row-finite $k$-graph with no sources. Suppose that $\Lambda$ is
strongly aperiodic. Then $C^*(\Lambda)$  has the ideal property.
\end{lemma}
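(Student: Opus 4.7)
The plan is to derive the lemma directly from Corollary~\ref{thm.ip}, so that essentially no further work is required beyond identifying the correct reformulation. Under the hypothesis that $\Lambda$ is strongly aperiodic, part~(vi) of that corollary provides a bijection between the ideals of $C^*(\Lambda)$ and the hereditary saturated subsets of $\Lambda^0$, given by $I \mapsto H_I := \{v \in \Lambda^0 : s_v \in I\}$, with inverse sending $H$ to the ideal $I_H$ generated by $\{s_v : v \in H\}$.

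To verify that $C^*(\Lambda)$ has the ideal property, I would fix two distinct ideals $I, J$ of $C^*(\Lambda)$. By the bijection, $H_I \neq H_J$, so (swapping $I$ and $J$ if necessary---the condition in the definition of (IP) being symmetric in $I$ and $J$) I may choose $v \in H_I \setminus H_J$. The vertex projection $s_v$ then belongs to $I$ directly from the definition of $H_I$, while $s_v \notin J$: indeed, $s_v \in J$ would force $v \in H_J$, contradicting the choice of $v$. Hence $s_v$ is a projection lying in $I \setminus J$, which is exactly what (IP) demands.

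There is no genuine obstacle in this argument; the only thing to notice is that the content of (IP) in this setting is a direct consequence of the fact, already established in Corollary~\ref{thm.ip}(vi), that each ideal of $C^*(\Lambda)$ is generated by the vertex projections it contains. Once that is in hand, distinct ideals are automatically separated by their vertex projections, and the conclusion is immediate.
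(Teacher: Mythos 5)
Your proof is correct and follows essentially the same route as the paper: both arguments invoke Corollary~\ref{thm.ip} to identify ideals with hereditary saturated vertex sets via $I \mapsto \{v : s_v \in I\}$ and then separate two ideals by a vertex projection $s_v$ with $v$ in one set but not the other. The only cosmetic difference is that the paper writes $I = I_H$, $J = I_K$ and cites \cite[Theorem~5.2(b)]{RaeSimYee} for the fact that $\{w : s_w \in I_H\} = H$, whereas you extract the same information from the injectivity of the bijection in part~(vi).
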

\begin{proof}
Let $I,J$ be ideals of $C^*( \Lambda)$ such that $I\not\subseteq J$. Recall that for $H
\subseteq \Lambda^0$, $I_H$ denotes the ideal in $C^*(\Lambda)$ generated by $\{s_w :
w\in H\}$. By Corollary~\ref{thm.ip}, $I=I_H$ and $J=I_K$ for some saturated hereditary
$H,K\subseteq \Lambda^0$. Since $I\not\subseteq J$, we have $H\not\subseteq K$, say $v
\in H \setminus K$. \cite[Theorem~5.2(b)]{RaeSimYee} shows that $H = \{w : s_w \in I_H\}$
and similarly for $K$, so we deduce that $s_v$ is a projection in $I_H \setminus I_K = I
\setminus J$.
\end{proof}	

\begin{rmk}\label{rem5.4}
It not clear whether strong aperiodicity of $\Lambda$ is equivalent to property~(IP) for
$C^*(\Lambda)$. However, we obtain some easy partial results. First, suppose
$C^*(\Lambda)$ is AF---so automatically has property~(IP). By
\cite[Proposition~3.12]{EvaSim}, every ideal of $C^*( \Lambda)$ is gauge-invariant, so
Corollary~\ref{thm.ip}, shows that $\Lambda$ is strongly aperiodic. Second, suppose that
$C^*( \Lambda)$ has~(IP) and is purely infinite. By \cite[Proposition~2.11]{PasRor},
$C^*(\Lambda)$ has topological dimension zero, so Corollary~\ref{thm.ip} shows that
$\Lambda$ is strongly aperiodic.
\end{rmk}

\begin{proof}[Proof of Proposition~\ref{prop:aq-pis}]
Since every vertex of $\Lambda$ has an aperiodic quartet, \cite[Proposition~3.9]{KanPas}
shows that $\Lambda$ is strongly aperiodic. Hence Lemma~\ref{lem5.3} shows that $C^*(\Lambda)$ has
property~(IP). By \cite[Proposition~2.14]{PasRor}, a $C^*$-algebra with property~(IP) is
purely infinite if and only if it is strongly purely infinite. So it suffices to prove
that $C^*(\Lambda)$ is purely infinite.

To show that $C^*(\Lambda)$ is purely
infinite it suffices to verify property~(ii) of Lemma~\ref{lem5.2}; that is, it suffices
to show that every $s_v$ is properly infinite.

Fix any $v\in \Lambda^0$. By assumption there exist $a,b \in \ZZ^+$, distinct
$\alpha_1,\alpha_2 \in v \Lambda^{a e_1} v$ and distinct $\beta_1, \beta_2 \in v
\Lambda^{b e_2} v$ such that $\beta_2 \alpha_1 = \alpha_1 \beta_2$, $\beta_2 \alpha_2 =
\alpha_2 \beta_2$, $\beta_1 \alpha_1 = \alpha_2 \beta_1$, and $\beta_1 \alpha_2 =
\alpha_1 \beta_1$. By (CK3)--(CK4), for $i = 1,2$ we have
\[
s_v =s_{\alpha_i}^* s_{\alpha_i},
    \qquad s_{\alpha_i} s_{\alpha_i}^*\leq \sum_{\lambda \in v\Gamma^{a {e_1}}} s_\lambda s_\lambda^*=s_v.
\]
Consequently, there exist distinct mutually orthogonal subprojections of $s_v$ in $C^*(
\Lambda)$, each Murray--von Neumann equivalent to $s_v$. We conclude that $s_v$ is
properly infinite.
\end{proof}

\section{Examples}\label{sec:examples}
In this section we present three examples highlighting the necessity of the hypotheses in
Theorem~\ref{thm4.2}(1). All three examples are constructed using 2-graphs. Our result
says that the combination of strong aperiodicity of $\Lambda$, injectivity of each
$H_1(\qH)$, and pure infiniteness of $C^*(\Lambda)$ is sufficient to guarantee that
$C^*(\Lambda)$ has real rank zero. Our three examples show that no combination of two of
these conditions is strong enough.

We describe our examples using the \emph{2-coloured graphs} of \cite{HazRaeSimWeb}. A
$2$-coloured graph is a directed graph endowed with a map $c  : E^1 \to \{c_1,c_2\}$. We
think of $c$ as determining a \emph{colour map} from $E^*$ to the free abelian semigroup
$\mathbb{F}_2$ generated by $\{c_1, c_2\}$ and for $w \in \mathbb{F}_2$, we say that
$\lambda \in E^*$ is $w$-coloured if $c(\lambda) = w$. A collection of factorisation
rules for $E$ is a range- and source-preserving bijection $\theta$ from the
$c_1c_2$-coloured paths in $E^*$ to the $c_2c_1$-coloured paths. For $k=2$ the
associativity condition of \cite{HazRaeSimWeb} is trivial, and so
\cite[Theorems~4.4~and~4.5]{HazRaeSimWeb} say that for every $2$-coloured graph $(E,c)$
with a collection $\theta$ of factorisation rules, there is a unique $2$-graph $\Lambda$
with $\Lambda^{e_i} = c^{-1}(c_i)$, $\Lambda^0 = E^0$, and $ef = f'e'$ in $\Lambda$
whenever $\theta(ef) = f'e'$ in $E^*$.

\begin{eg}\label{eq6.3}
Strong aperiodicity of $\Lambda$ (and hence topological dimension zero for
$C^*(\Lambda)$) combined with strong pure infiniteness of $C^*(\Lambda)$ do not suffice
for $C^*(\Lambda)$ to have real-rank zero, even in the special case $k=2$. To see this,
fix $n \ge 3$ and let $(E, c)$ be the following 2-coloured graph:
\[
\begin{tikzpicture}[xscale=2, yscale=1.25]
    \node[inner sep=0.5pt, circle] (17) at (-1,8) {\tiny${\ u\ \bullet\ }$};
    \node[inner sep=0.5pt, circle] (07) at (0.5,8) {\tiny${\ \bullet\ v\ }$};
    \node[inner sep=0.5pt, circle] (27) at (2,8) {\tiny${\ \bullet\ w\ }$};
	\node at (2,6.85) {$\vdots$};
	\node at (2.82,6.85) {\tiny{$n+1$}};
	\node at (2,9.3) {$\vdots$};
	\node at (2.82,9.25) {\tiny{$n+1$}};
    \draw[-latex, red, dashed] (07) edge[out=160,in=20] (17);
    \draw[-latex, blue] (07) edge[out=200,in=340] (17);
    \draw[-latex, red, dashed] (17) edge[out=40,in=140] (07);
    \draw[-latex, blue] (17) edge[out=320,in=220] (07);
    \draw[-latex, red, dashed] (27) edge[out=160,in=20] (07);
    \draw[-latex, blue] (27) edge[out=200,in=340] (07);
	\path[->,every loop/.style={looseness=10}] (17)
	         edge  [in=70,out=110,loop, red, dashed] ();
	\path[->,every loop/.style={looseness=14}] (17)
			 edge  [in=60,out=120,loop, red, dashed] ();
	\path[->,every loop/.style={looseness=10}] (17)
	         edge  [in=250,out=290,loop, blue] ();
	\path[->,every loop/.style={looseness=14}] (17)
			 edge  [in=240,out=300,loop, blue] ();	
	\path[->,every loop/.style={looseness=10}] (07)
	         edge  [in=70,out=110,loop, red, dashed] ();
	\path[->,every loop/.style={looseness=14}] (07)
			 edge  [in=60,out=120,loop, red, dashed] ();
	\path[->,every loop/.style={looseness=10}] (07)
	         edge  [in=250,out=290,loop, blue] ();
	\path[->,every loop/.style={looseness=14}] (07)
			 edge  [in=240,out=300,loop, blue] ();

	\path[->,every loop/.style={looseness=10}] (27)
	         edge  [in=70,out=110,loop, red, dashed] ();
	\path[->,every loop/.style={looseness=14}] (27)
			 edge  [in=60,out=120,loop, red, dashed] ();
	\path[->,every loop/.style={looseness=24}] (27)
			 edge  [in=50,out=130,loop, red, dashed] ();			
	\path[->,every loop/.style={looseness=10}] (27)
	         edge  [in=250,out=290,loop, blue] ();
	\path[->,every loop/.style={looseness=14}] (27)
			 edge  [in=240,out=300,loop, blue] ();
	\path[->,every loop/.style={looseness=24}] (27)
			 edge  [in=230,out=310,loop, blue] ();
\end{tikzpicture}
\]
We define factorisation rules as follows. First, for each pair of vertices $x,y$ of $E$,
list the blue edges in $x E^1 y$ as $\{\alpha^{x,y}_1, \dots, \alpha^{x,y}_{|xE^1y|/2}\}$
and the red edges as $\{\beta^{x,y}_1, \dots, \beta^{x,y}_{|xE^1y|/2}\}$. When $x = y$,
we write $\alpha^x_i$ and $\beta^x_i$ instead of $\alpha^{x,x}_i$ and $\beta^{x,x}_i$.
For each vertex $x$, we define four factorisation rules by
\begin{equation}\label{eq:aq}
    \beta^x_2 \alpha^x_1 = \alpha^x_1 \beta^x_2, \quad \beta^x_2 \alpha^x_2 = \alpha^x_2 \beta^x_2, \quad
    \beta^x_1 \alpha^x_1 = \alpha^x_2 \beta^x_1, \quad \beta^x_1 \alpha^x_2 = \alpha^x_1 \beta^x_1.
\end{equation}
We then specify $\alpha^{x,y}_i \beta^{y,z}_j = \beta^{x,y}_i \alpha^{y,z}_j$ for every
blue-red path $\alpha^{x,y}_i \beta^{y,z}_j$ not appearing in the left-hand side of any
of the factorisation rules in~\eqref{eq:aq}. With these factorisation rules we obtain a
$2$-graph $\Lambda$, which is row finite with no sources. The factorisation
rules~\eqref{eq:aq} yield a $\big((1,0), (0,1)\big)$-aperiodic quartet at each vertex $x$
of $\Lambda$. Proposition~\ref{prop:aq-pis} and \cite[Proposition~3.9]{KanPas} ensures
that $C^*(\Lambda)$ is strongly purely infinite and $\Lambda$ is strongly aperiodic for
each $n\geq 1$. It is easy to check that $H:=\{w\}$ is a hereditary saturated subset of
$\Lambda^0$. With the notation of \eqref{eq:block decomp} we have
\[
M^t_{i,H} = (n+1), \qquad M^t_{i,H,\Lambda^0\setminus H} = (0\ 1),
    \quad\text{ and }\quad M^t_{i, \Lambda^0\setminus H}=\minimatrix{2}{1}{1}{2}
\]
for each $i$. With $L_2:=\{M^t_{1,H,\Lambda^0\setminus H}b  : b \in
\ZZ{(\Lambda^0\setminus H)} \textrm{ and } (M^t_{1, \Lambda^0\setminus H}-1)b=0\}$ and
$L_1:=\{(M^t_{1,H}-1)a : a\in \ZZ{H}\}$, we have $L_1 = n\ZZ$ and $L_2 = \ZZ$. So
Proposition~\ref{injective.equival} implies that $H_1(\qH)$ is injective if and only if
$n = 1$. Hence Theorem~\ref{thm4.2}(1) implies that $C^*(\Lambda)$ has real-rank zero for
$n=1$, but not for $n>1$.
\end{eg}

\begin{eg}\label{eq6.4}
The injectivity of each $H_1(\qH)$ combined with strong pure infiniteness of
$C^*(\Lambda)$ do not suffice for $C^*(\Lambda)$ to have real-rank zero, even in the
special case $k=2$. To see this, consider the following 2-coloured graph:
\[
\begin{tikzpicture}[rotate=90]
    \node[inner sep = 1.5pt] (v) at (0,0) {$v$};
    \draw[-latex, blue] (v) .. controls (0.55,0.75) and (0.7,1) .. (0,1)
        node[pos=1,anchor=west,inner sep=1.5pt] {\color{black}\small$e_1$}
        .. controls (-0.7,1) and (-0.5,0.7) .. (v);
    \draw[-latex, blue] (v) .. controls (0.85,0.85) and (1,1.25) .. (0,1.25)
        node[pos=1,anchor=east,inner sep=1.5pt] {\color{black}\small$e_2$}
        .. controls (-1,1.25) and (-0.85,0.85) .. (v);
    \draw[-latex, dashed, red] (v) .. controls (0.5,-0.7) and (0.7,-1) .. (0,-1)
        node[pos=1,anchor=east,inner sep=1.5pt] {\color{black}\small$f_1$}
        .. controls (-0.7,-1) and (-0.55,-0.75) .. (v);
    \draw[-latex, dashed, red] (v) .. controls (0.8,-0.8) and (1,-1.25) .. (0,-1.25)
        node[pos=1,anchor=west,inner sep=1.5pt] {\color{black}\small$f_2$}
        .. controls (-1,-1.25) and (-0.85,-0.85) .. (v);
\end{tikzpicture}
\]
Define factorisation rules on $(E,c)$ by $e_i f_j = f_i e_j$, and let $\Lambda$ be the
resulting $2$-graph. By \cite[Corollary~3.5(iii)]{KumPas}, we have $C^*(\Lambda) \cong
\mathcal{O}_{2} \otimes C(\TT)$. As $\mathcal{O}_{2}$ is purely infinite simple,
$C^*(\Lambda)$ is strongly purely infinite, see \cite[Theorem~1.3]{KirSie2}. There are no
non-trivial saturated hereditary subsets of $\Lambda^0$, so injectivity of each
$H_1(\qH)$ is trivial. The $C^*$-algebra $C^*(\Lambda)$ is non-simple, but none of the
proper ideals of $C^*(\Lambda)$ contain any vertex projections. Consequently, it follows
from Corollary~\ref{thm.ip} that $\Lambda$ is not strongly aperiodic (alternatively, for
each $x\in v\Lambda^{\infty}$, $\sigma^{(1,-1)}(x)=x$). Using Theorem~\ref{thm4.2} we
conclude that $C^*(\Lambda)$ fails to have real-rank zero (cf.~\cite{PasRor}).
\end{eg}

\begin{eg}
Strong aperiodicity combined with the injectivity of each $H_1(\qH)$ does not suffice for
$C^*(\Lambda)$ to have real-rank zero, even in the special case $k=2$. To see this,
consider the $2$-coloured graph:
\[
\begin{tikzpicture}
    \node[circle, inner sep=1pt] (v1) at (0,0) {$v_1$};
    \node[circle, inner sep=1pt] (v2) at (2,0) {$v_2$};
    \node[circle, inner sep=1pt] (v3) at (4,0) {$v_3$};
    \node[circle, inner sep=1pt] (v4) at (6,0) {$v_4$};
    \node at (7,0) {\dots};
	\path[->,every loop/.style={looseness=8}] (v1)
	         edge[in=45,out=135,loop, blue]  node[above, pos=0.5,black]{\small$e_1$} ();
	\path[->,every loop/.style={looseness=8}] (v2)
	         edge[in=45,out=135,loop, blue]  node[above, pos=0.5,black]{\small$e_2$} ();
	\path[->,every loop/.style={looseness=8}] (v3)
	         edge[in=45,out=135,loop, blue]  node[above, pos=0.5,black]{\small$e_3$} ();
	\path[->,every loop/.style={looseness=8}] (v4)
	         edge[in=45,out=135,loop, blue]  node[above, pos=0.5,black]{\small$e_4$} ();
    \draw[red, dashed, -stealth, in=20, out=160] (v2) to node[above, pos=0.5, black] {\small$f_{1,0}$} (v1);
    \draw[red, dashed, -stealth, in=340, out=200] (v2) to node[below, pos=0.5, black] {\small$f_{1,1}$} (v1);
    \draw[red, dashed, -stealth, in=30, out=150] (v3) to node[above, pos=0.5, black] {\small$f_{2,0}$} (v2);
    \node at (3,0.1) {$\vdots$};
    \draw[red, dashed, -stealth, in=330, out=210] (v3) to node[below, pos=0.5, black] {\small$f_{2,7}$} (v2);
    \draw[red, dashed, -stealth, in=30, out=150] (v4) to node[above, pos=0.5, black] {\small$f_{3,0}$} (v3);
    \node at (5,0.1) {$\vdots$};
    \draw[red, dashed, -stealth, in=330, out=210] (v4) to node[below, pos=0.5, black] {\small$f_{3,31}$} (v3);
\end{tikzpicture}
\]
that has $2^{2n-1}$ edges labelled $f_{n,0}, \dots, f_{n, 2^{2n-1}-1}$ from $v_{n+1}$ to
$v_n$ for each $n \ge 1$. There is a unique $2$-graph for this $2$-coloured graph with
factorisation rules given by
\[
e_n f_{n, i}
    = \begin{cases}
        f_{n, i+1} e_{n+1} &\text{ if $0 \le i < 2^{n-1}-1$}\\
        f_{n, 0} e_{n+1} &\text{ if $i = 2^{n-1}-1$}\\
        f_{n,i} e_{n+1} &\text{ if $2^{n-1} < i < 2^{2n-1}$;}
    \end{cases}
\]
so factorisation through a blue loop cyclicly permutes the first $2^{n-1}$ edges in $v_n
\Lambda^{e_2} v_{n+1}$, and fixes the remaining ones.

For this example, in the notation of \cite[Theorem~7.2]{PasRaeRorSim}, the $A_n$ are all
nonzero $1 \times 1$ matrices, and each $\mathcal{F}^{1,1}_n$ is the permutation of
$v_{n} \Lambda^{e_2} v_{n+1}$ induced by the factorisation rules. In particular, the
constants $\beta_1$, as in \cite[Definition~7.1]{PasRaeRorSim}, are given by $$\beta_1 =
\frac{1}{2^{2n-1}} \cdot \{i : 2^{n-1} < i < 2^{2n-1}\},$$ and so each $1 -
\beta_1(F_n^{1,1})$ is $2^{n-1}/2^{2n-1} = 2^{-n}$. So $\overline{\alpha}_1 =
\sum_{n=1}^\infty (1 - \beta_1(\mathcal{F}^{1,1}_n)) = \sum 2^{-n}$ converges to $1 <
\infty$. Thus \cite[Theorem~7.2(2)]{PasRaeRorSim} implies that $C^*(\Lambda)$ has
real-rank~1. The graph $\Lambda$ is clearly cofinal. The numbers
$\kappa(\mathcal{F}^{1,1}_n)$ denoting the maximal orders of elements of $v_n
\Lambda^{e_2} v_{n+1}$ under $\mathcal{F}^{1,1}_n$ satisfy $\kappa(\mathcal{F}^{1,1}_n) =
2^{n-1} \to \infty$. Hence the first statement of \cite[Theorem~7.2]{PasRaeRorSim} says
that $C^*(\Lambda)$ is simple. It follows that $\Lambda$ is strongly aperiodic, and each
$H_1(\qH)$ is injective because $H=\Lambda^0$ is the only non-empty saturated hereditary
subset of $\Lambda^0$, in which case $H_1(\qH)$ is the identity map.
\end{eg}

\end{document}